\newcommand{\todo}[1]{#1}
\newcommand{\mv}[1]{{\boldsymbol{\mathrm{#1}}}}
\DeclareMathOperator{\sgn}{sgn}
\newcommand{\Cov}{\operatorname{Cov}}
\newcommand{\Var}{\operatorname{Var}}
\begin{document}

\title{Linear cost and exponentially convergent approximation of Gaussian Mat\'ern processes \todo{on intervals}}

\author{\name David Bolin \email david.bolin@kaust.edu.sa \\
       \addr CEMSE Division, statistics program\\
       King Abdullah University of Science and Technology (KAUST),\\
       Thuwal 23955-6900, Kingdom of Saudi Arabia
       \AND
       \name Vaibhav Mehandiratta \email vaibhav.mehandiratta@kaust.edu.sa \\
       \addr CEMSE Division, statistics program\\
       King Abdullah University of Science and Technology (KAUST),\\
       Thuwal 23955-6900, Kingdom of Saudi Arabia
       \AND 
       \name Alexandre B. Simas \email alexandre.simas@kaust.edu.sa\\
       \addr CEMSE Division, statistics program\\
       King Abdullah University of Science and Technology (KAUST),\\
       Thuwal 23955-6900, Kingdom of Saudi Arabia}

\editor{}

\maketitle

\begin{abstract}
The computational cost for inference and prediction of statistical models based on Gaussian processes with Matérn covariance functions scales cubically with the number of observations, limiting their applicability to large data sets. The cost can be reduced in certain special cases, but there are no generally applicable exact methods with linear cost. Several approximate methods have been introduced to reduce the cost, but most lack theoretical guarantees for accuracy.
We consider Gaussian processes on bounded intervals with Matérn covariance functions and, for the first time, develop a generally applicable method with linear cost and a covariance error that decreases exponentially fast in the order $m$ of the proposed approximation. The method is based on an optimal rational approximation of the spectral density and results in an approximation that can be represented as a sum of $m$ independent Gaussian Markov processes, facilitating usage in general software for statistical inference.
Besides theoretical justifications, we demonstrate accuracy empirically through carefully designed simulation studies, which show that the method outperforms state-of-the-art alternatives in accuracy for fixed computational cost in tasks like Gaussian process regression.
\end{abstract}

\begin{keywords}
  Gaussian process, Gaussian Markov random field, inference, prediction
\end{keywords}

\section{Introduction}
Gaussian stochastic processes with Mat\'ern covariance functions \citep{matern60} are important models in statistics and machine learning \citep{porcu2023mat}, and in particular in areas such as 
spatial statistics \citep{stein1999interpolation, lindgren2022spde}, computer experiments \citep{santner2003design, gramacy2020surrogates} and Bayesian optimization \citep{srinivas2009gaussian}. \todo{Although the Mat\'ern covariance often is used for spatial data, it is also commonly used for temporal data, in particular in areas such as functional data analysis \citep{Jingjing2016}, longitudinal data analysis \citep{asar2020jrssc}, and growth rate modeling \citep{Swain2016}.}
A Gaussian process $u$ on $\mathbb{R}$ has a Mat\'ern covariance function if 
\begin{equation}\label{eq:matern_cov}
 \Cov(u(s),u(t)) = \varrho(|s-t|;\nu,\kappa,\sigma^2), \quad \varrho(h;\nu,\kappa,\sigma^2)=\frac{\sigma^2}{2^{\nu-1}\Gamma{(\nu)}}(\kappa h)^{\nu} K_\nu(\kappa h),
\end{equation}
where $K_\nu(\cdot)$ is a modified Bessel function of the second kind of order $\nu$, and $\Gamma(\cdot)$ denotes the gamma function. The three parameters $\kappa, \sigma^2, \nu>0$, determine the practical correlation range, variance and smoothness of the process, respectively. \todo{We introduce the practical correlation range $\rho = \sqrt{8\nu}/\kappa$ as the distance at which the correlation is approximately 0.1, and a new smoothness parameter $\alpha = \nu + \nicefrac12$, which will be used throughout this text.}

 However, the approach of defining Gaussian processes via the covariance function incurs a significant computational cost for inference and prediction, mainly due to the requirement of factorising dense covariance matrices.
To address this so called ``Big n'' problem, referring to the $\mathcal{O}(n^3)$ computational cost required for problems with $n$ observations, several methods have been developed in recent years to reduce the cost. 
In this work, we \todo{consider} the case where the data is observed on an interval $I \subset \mathbb{R}$.  The processes have Markov properties when $\alpha \in \mathbb{N}$, and this has been used to derive exact methods with a $\mathcal{O}(n)$ cost for these particular cases, see for example the Kernel packet method \citep{chen2022kernel} or the state-space methods of \cite{hartikainen2010kalman,sarkka2012infinite,sarkka2013spatiotemporal}. If the data is evenly spaced in $\mathbb{R}$, Toeplitz methods \citep{wood1994simulation} can also be used to reduce the computational cost to \todo{$\mathcal{O}(n(\log n)^2)$}  \citep{Ling, supergauss}. Outside these particular cases, there are currently no exact methods that can reduce the computational cost, and one instead needs to rely on approximations. One popular method is the random Fourier features approach of \cite{rahimi2007random} which gives a cost $\mathcal{O}(m^2+mn)$ when using $m$ features, and an accuracy of $\mathcal{O}(m^{-\nicefrac{1}{2}})$. This is one of the few methods that have theoretical guarantees for how quickly the approximation error decreases as the order $m$ increases. Unfortunately, the theoretical rate is low and as we will later see, the method provides poor approximations for Mat\'ern processes on $\mathbb{R}$. 

Another widely used method is the SPDE approach of \cite{lindgren11}, where the Mat\'ern process is represented as a solution to a stochastic differential equation (SDE) which is approximated via a finite element method (FEM) approximation. This was originally proposed for the case $\nu - \nicefrac12 \in \mathbb{N}_0$ \todo{(i.e., $\alpha\in\mathbb{N}$)} and was later extended by \cite{bolin2020numerical, bolin2024covariance} to general $\nu>0$, where the authors also derived explicit rates of convergence of the approximation in terms of the finite element mesh width. The approach is computationally efficient, but has the disadvantage that the SDE has certain boundary conditions which makes the approximation converge to a non-stationary covariance which is only similar to the Mat\'ern covariance away from the boundary of the computational domain. The rate of convergence is better than that of the random Fourier features method, but does not decrease exponentially fast.
Other approaches, which are generally applicable, but without theoretical rates of convergence of the covariance function approximation are covariance tapering \citep{furrer2006covariance}, Vecchia approximation  \citep{vecchia1988, gramacy2015local, Datta2016}, low rank methods \citep{higdon2002space, Cressie2008}, and multiresolution approximations \citep{Nychka2015}.  
The state space methods have also been extended to general  $\nu>0$ in \cite{karvonen2016approximate} and \cite{tronarp2018mixture} through spectral transformation methods and the approximation of the Matérn kernel by a finite scale mixture of squared exponential kernels, respectively. However, the accuracy the resulting approximated covariance function has been demonstrated only through numerical experiments and no theoretical analysis of the rate has been provided.

In this work, we develop a new method for Gaussian Mat\'ern processes on intervals, which has at most $\mathcal{O}(nm^3\lceil{\alpha\rceil^3})$ computational cost when used for statistical inference, prediction and sampling, where $m$ is the order of the approximation. We prove in Section~\ref{sec:rationalapprox} that the error of the covariance function converges exponentially fast in $m$, which in practice means that $m$ can be chosen very low. Specifically, the error is $O\bigl(\exp(-2\pi\sqrt{\{\alpha\}m})\bigr)$ where $\{\alpha\}$ is the fractional part of $\alpha\notin\mathbb{N}$. If  $\alpha\in\mathbb{N}$, the method is exact and has a cost of $\mathcal{O}(n\lceil{\alpha\rceil^3})$. 
The approach is based on a rational approximation of the spectral density, which thus is similar in spirit to \cite{karvonen2016approximate, Roininen2018}. The difference is, however, that we have theoretical guarantees for the error. Because the method is based on a rational approximation, it can be used in combination with state-space methods for efficient inference. We, however, derive in Section~\ref{sec:inference} a direct representation of the approximated process as a sum of Gaussian Markov random fields with sparse precision (inverse covariance) matrices, which in fact are band matrices. This representation has several advantages, and perhaps the most important is that it can directly be incorporated in general software for Bayesian inference, such as \texttt{R-INLA} \citep{lindgren2015bayesian}. 
Another important feature is that the linear cost is applicable to working with the process and its derivatives jointly. This is useful in several applications that arise in the natural sciences where observations of the derivatives are available \cite[see, e.g.][]{solak2002derivative, padidar2021scaling, de2021high, yang2018sparse} \todo{or when derivatives are of direct interest \citep{Swain2016}}.

To validate the effectiveness and accuracy of the method, we compare it in Section~\ref{sec:numerical} to the covariance-based rational approximation method of \cite{bolin2024covariance} as well as the Vecchia approximations \citep{Datta2016}. \todo{These methods demonstrated superior performance among several alternatives for approximating Gaussian Matérn fields in specific geostatistical test problems, as shown in the comparative study by \citet{hong2023third}.}
We also compare with the state-space approach of \citet{karvonen2016approximate}, the random Fourier features method of \citet{rahimi2007random} and the covariance tapering approach of \citet{Nychka2015}. We show that the method outperforms the alternatives in terms of accuracy for a fixed cost when used for Gaussian process regression. The comparison also includes a principal component analysis (PCA) \citep{wang2008karhunen} approach, which serves as an ``optimal'' low-rank method. As the proposed method outperforms this, it means that it also would outperform any other low-rank method, such as fixed rank kriging \citep{Cressie2008} or process convolutions \citep{higdon2002space}.
Extensions of the method beyond stationary Mat\'ern processes on intervals, as well as concluding remarks, are given in Section~\ref{sec:discussion}.
The proposed method is implemented in the R package \texttt{rSPDE} \citep{rSPDE} available on CRAN, and all code for the comparisons, as well as a Shiny application with further results can be found in \url{https://github.com/vpnsctl/MarkovApproxMatern/}. Proofs and technical details are provided in two technical appendices. 

\section{Exponentially convergent rational approximation}\label{sec:rationalapprox}

The proposed method can be obtained through two equivalent formulations. Either through a rational approximation of the spectral density of the Gaussian process, or through a rational approximation of the covariance operator of the process. The latter formulation enables extensions which we will explore in Section~\ref{sec:discussion}. In this section, we outline the idea through the spectral density approach, which is less technical.

Let $u$ be a centered Gaussian Process on an interval $I\subset \mathbb{R}$ with covariance function \eqref{eq:matern_cov} and $\alpha = \nu + \nicefrac{1}{2} \notin\mathbb{N}$ (which is the case for which no exact and efficient methods exist). This process has spectral density 
$
f_{\alpha}(w) = A\sigma^2(\kappa^2 + w^2)^{-\alpha},
$
where \todo{$A=\sqrt{2}\kappa^{2\nu}\Gamma(\nu+\nicefrac{1}{2})\Gamma(\nu)^{-1}$} \citep{lindgren2012stationary}. 
We define a rational approximation of the process $u$ as a Gaussian process with spectral density
\begin{equation}\label{rational_approx}
f_{m,\alpha}(w) = A\kappa^{-2\alpha}\frac{\sigma^2}{(1+\kappa^{-2}w^2)^{\lfloor \alpha \rfloor}} \frac{P_m(1+\kappa^{-2}w^2)}{Q_m(1+\kappa^{-2}w^2)},
\end{equation}
\todo{where $P_m(x) = \sum_{i=0}^m a_i x^{m-i}$ and $Q_m = \sum_{i=0}^{m} b_i x^{m-i}$} are polynomials derived from the optimal rational approximation of order $m$ for the real-valued function $f(x) = x^{\{\alpha\}}$ on the interval $[0, 1]$, with respect to the supremum norm\todo{, where we recall that $\{\alpha\}$ is the fractional part of $\alpha$. More precisely, we use the approximation $x^{-\{\alpha\}} = \left(x^{-1}\right)^{\{\alpha\}} \approx P_n(x)/Q_n(x)$, where the coefficients $\{a_i\}_{i=0}^m$ and $\{b_i\}_{i=0}^m$ are such that the approximation}
\begin{equation}\label{eq:rat_approx_01}
x^{\{\alpha\}} \approx \frac{\sum_{i=0}^{m} a_i x^{i}}{\sum_{i=0}^m b_i x^{i}}
\end{equation}
is the best with respect to the supremum norm on $[0,1]$.
The coefficients $\{a_i\}_{i = 0}^{m}$ and  $\{b_i\}_{i = 0}^{m}$ in the rational approximation \todo{are unique \cite[Chapter 7.2]{lorentz1996constructive} and} can be obtained via the second Remez algorithm \citep{remez1934determination} or by using the recent, and more stable, BRASIL algorithm \citep{hofreither2021algorithm}. \todo{Note that the polynomials $P_m(x)$ and $Q_m$ in \eqref{rational_approx} are written in terms of $x^{m-i}$ instead of $x^{i}$, using the coefficients from \eqref{eq:rat_approx_01}, because the rational approximation is applied to $(x^{-1})^{\{\alpha\}}$.}

\begin{figure}[t]
  \begin{center}
	\includegraphics[height=0.7\linewidth]{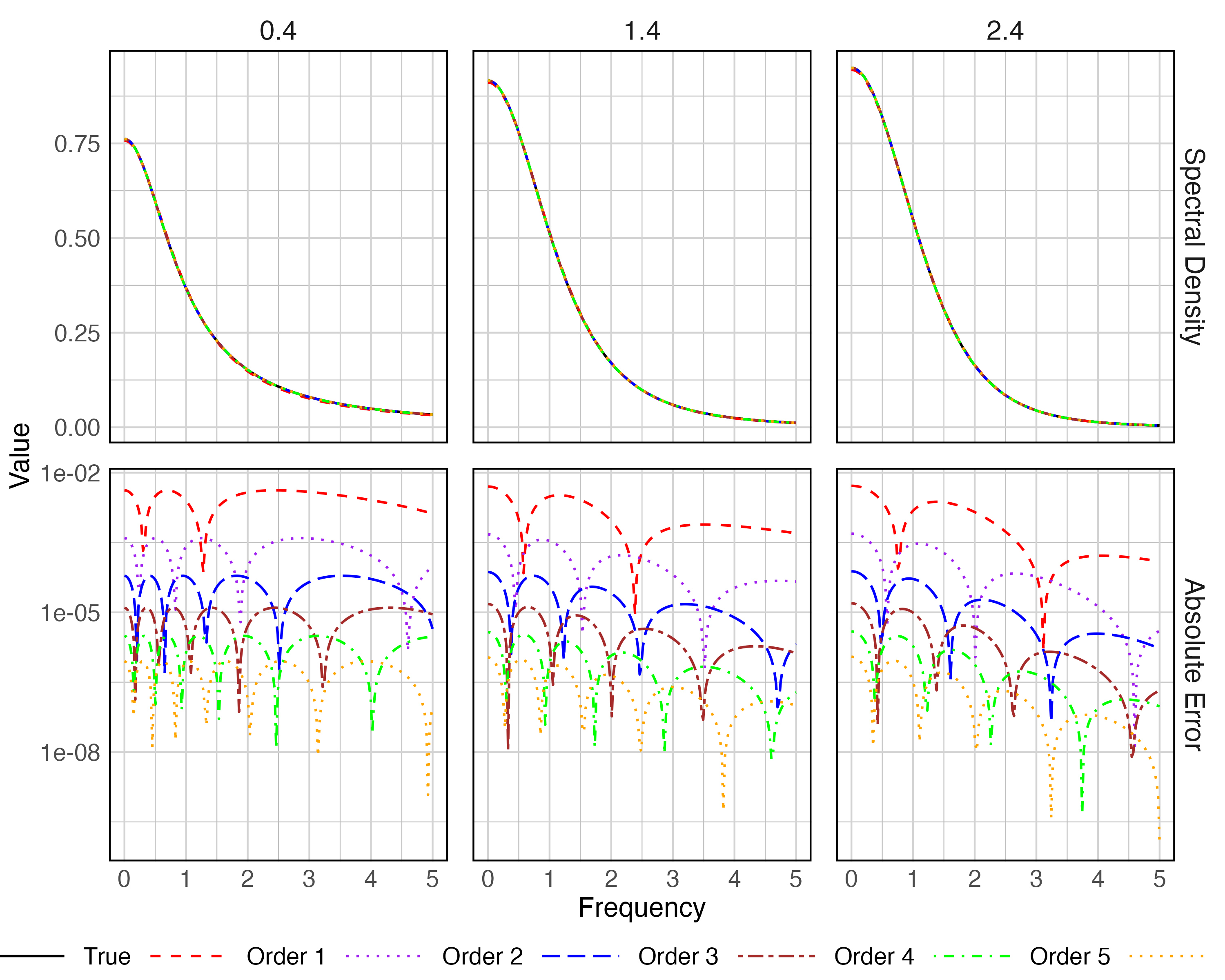}
  \end{center}
  \caption{\todo{True and approximate spectral densities for $\nu=0.4$, $1.4$ and $2.4$, and for different rational approximation orders (top row), and corresponding absolute errors for the rational approximations (bottom row).}}
  \label{fig:spectral_density}
\end{figure}

\todo{The true and approximate spectral densities for $\nu=0.4$, $1.4$ and $2.4$, along with their absolute errors, are shown in Figure \ref{fig:spectral_density} for different orders of rational approximation, where we obtained the coefficients using BRASIL algorithm. Here, we consider $\sigma = 1$ and $\rho = 2$, where $\rho = \nicefrac{\sqrt{8\nu}}{\kappa}$ represents the practical correlation range. We can observe that the approximation is very good even for low orders, with an almost perfect match already for order 2.}

\todo{Let $r_{m,\alpha}$ be the corresponding covariance function obtained from the rational approximation of order $m$ of $f_{\alpha}$ given by \eqref{rational_approx}, more precisely, let
\begin{equation}\label{eq:approx_cov_inv_fourier}
  r_{m,\alpha}(t) = \frac{1}{2\pi}\int_{\mathbb{R}} e^{iwt} f_{m,\alpha}(w) \, dw,\quad t\in\mathbb{R}.
\end{equation}
An explicit expression for $r_{m,\alpha}(\cdot)$ will be obtained in the next section.}

\todo{The following result demonstrates that the approximated covariance function $r_{m,\alpha}(\cdot)$ converges exponentially fast to the true Matérn covariance function with respect to both the $L_2(I \times I)$-norm and the supremum norm. For $f \in L_2(I \times I)$, the $L_2(I \times I)$-norm is defined as $\|f\|_{L_2(I \times I)}^2 = \int_{I \times I} |f(s,t)|^2 \, ds \, dt$, where $L_2(I \times I)$ is the space of equivalence classes of real-valued square-integrable functions on $I \times I$. For $f \in C(I \times I)$, the supremum norm is defined as $\|f\|_{C(I \times I)} = \sup_{(s,t) \in I \times I} |f(s,t)|$, where $C(I \times I)$ is the space of continuous real-valued functions on $I \times I$.}

\begin{theorem}\label{ra_bound}
	\todo{Let $r_\alpha$ be the covariance function 
  \begin{equation}\label{eq:matern_cov_2par}
  r_\alpha(s,t;\kappa,\sigma^2) = \varrho(|s-t|;\alpha-\nicefrac{1}{2},\kappa,\sigma^2), \quad s,t,\in\mathbb{R},
  \end{equation}
  where $\varrho(\cdot)$ is the Mat\'ern covariance function
  defined in \eqref{eq:matern_cov} and $\alpha = \nu+\nicefrac12$. Further, let $r_{m,\alpha}$ be the rational approximation given in \eqref{eq:approx_cov_inv_fourier}.} If $\alpha>1/2$, then
	\begin{equation} \label{total_error_bound}
	\todo{\|r_{m,\alpha} - r_\alpha\|_{L_2(I \times I)} \leq 2 \sqrt{\pi} (b-a) A \sigma^2 \kappa^{-2\alpha} \min\{1, M_{\lfloor\alpha\rfloor,\kappa}\} C_{\{\alpha\}} \mathbb{I}_{\alpha \notin \mathbb{N}} e^{-2\pi\sqrt{\{\alpha\} m}}},
	\end{equation}
    \todo{where $\{\alpha\}$ denotes the fractional part of $\alpha$, $M_{n,\kappa} = \kappa \pi \Gamma(2n-1)/(4^{n-1}\Gamma(n)^2),$ $n\geq 1$ and  $M_{0,\kappa} = \infty$.
    Additionally, $C_{\{\alpha\}}\in (0,\infty)$ is a constant that depends only on $\{\alpha\}$.}
	Further, if $\alpha>1$, then a constant $K_\alpha\in (0,\infty)$ \todo{that only depends on $\alpha$} exists such that
	\begin{equation} \label{sup_bound}
	\todo{\|r_{m,\alpha} - r_\alpha\|_{\todo{C(I \times I)}} \leq A \sigma^2\kappa^{-2\alpha}K_\alpha \mathbb{I}_{\alpha \notin \mathbb{N}} e^{-2\pi\sqrt{\{\alpha\} m}}}.
	\end{equation}
\end{theorem}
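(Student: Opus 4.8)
The plan is to pass from the covariance functions to the spectral densities through the Fourier representation \eqref{eq:approx_cov_inv_fourier}, reducing everything to control of $f_{m,\alpha}-f_\alpha$ and, ultimately, to the scalar rational approximation error of $x^{\{\alpha\}}$ on $[0,1]$. Writing $I=[a,b]$ and $g(h)=r_{m,\alpha}(h)-r_\alpha(h)$, stationarity gives the identity $\|r_{m,\alpha}-r_\alpha\|_{L_2(I\times I)}^2=\int_{-(b-a)}^{b-a}(b-a-|h|)\,|g(h)|^2\,dh\le (b-a)\int_{-(b-a)}^{b-a}|g(h)|^2\,dh$, while $\|r_{m,\alpha}-r_\alpha\|_{C(I\times I)}=\sup_{|h|\le b-a}|g(h)|$, since every lag $h=s-t$ with $(s,t)\in I\times I$ lies in $[-(b-a),b-a]$. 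Thus both norms are governed by one-dimensional quantities involving $g$ on the compact lag interval, and $g$ is the inverse Fourier transform of $f_{m,\alpha}-f_\alpha$.

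First I would factor the spectral error. With $x=1+\kappa^{-2}w^2\ge 1$ and $z=x^{-1}\in(0,1]$, formula \eqref{rational_approx} yields $f_{m,\alpha}(w)-f_\alpha(w)=A\sigma^2\kappa^{-2\alpha}\,x^{-\lfloor\alpha\rfloor}\,(R_m(x)-x^{-\{\alpha\}})$, where $R_m=P_m/Q_m$, and by construction $R_m(x)-x^{-\{\alpha\}}$ equals (up to sign) the error of the best rational approximant of $z\mapsto z^{\{\alpha\}}$ on $[0,1]$ evaluated at $z=x^{-1}$. Hence $|f_{m,\alpha}(w)-f_\alpha(w)|\le A\sigma^2\kappa^{-2\alpha}\,x^{-\lfloor\alpha\rfloor}E_m$ uniformly in $w$, with $E_m=\sup_{z\in[0,1]}|z^{\{\alpha\}}-R_m|$. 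The crucial external input, which I would invoke as a black box, is the sharp rate for best uniform rational approximation of $x^{s}$ on $[0,1]$ (Stahl's theorem): a constant depending only on $s=\{\alpha\}$ exists with $E_m\le C_{\{\alpha\}}\,e^{-2\pi\sqrt{\{\alpha\}m}}$. This supplies the entire exponential decay and the constant $C_{\{\alpha\}}$ and is the heart of the argument; note that when $\alpha\in\mathbb{N}$ one has $\{\alpha\}=0$, $z^{\{\alpha\}}\equiv1$ is represented exactly, $E_m=0$, and the bound collapses, matching the indicator $\mathbb{I}_{\alpha\notin\mathbb{N}}$.

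For the supremum bound ($\alpha>1$) the estimate is then immediate: $\sup_{h}|g(h)|\le\frac{1}{2\pi}\|f_{m,\alpha}-f_\alpha\|_{L_1(\mathbb{R})}\le\frac{1}{2\pi}A\sigma^2\kappa^{-2\alpha}E_m\int_{\mathbb{R}}(1+\kappa^{-2}w^2)^{-\lfloor\alpha\rfloor}\,dw$, and the integral converges precisely when $\lfloor\alpha\rfloor\ge1$, i.e. $\alpha>1$, with value $M_{\lfloor\alpha\rfloor,\kappa}$ (a beta-integral brought to the stated form via the Legendre duplication formula, which also explains $M_{0,\kappa}=\infty$). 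Collecting the $w$-independent factors into $K_\alpha$ gives \eqref{sup_bound}. The same $L_1$-estimate combined with $\int_{I\times I}|g(s-t)|^2\le(b-a)^2\sup_h|g(h)|^2$ produces the branch of \eqref{total_error_bound} proportional to $M_{\lfloor\alpha\rfloor,\kappa}$.

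The remaining, and genuinely delicate, point is the branch of \eqref{total_error_bound} with the constant $1$, which must hold for all $\alpha>1/2$, including $\tfrac12<\alpha<1$ where $\lfloor\alpha\rfloor=0$ and the weight $x^{-\lfloor\alpha\rfloor}$ is trivial. The difficulty is that the best approximant equioscillates and does not vanish at the endpoint $z=0$, so $R_m(x)\to R_m(\infty)=O(E_m)\ne0$ as $w\to\infty$; consequently $f_{m,\alpha}-f_\alpha$ does not decay and lies in neither $L_1(\mathbb{R})$ nor $L_2(\mathbb{R})$, the process effectively carrying a tiny white-noise component of variance $O(E_m)$. The plan is to subtract this constant tail, which only alters $g$ at the lag $h=0$ and is therefore invisible to the $L_2(I\times I)$ norm, and then bound the residual contribution on the compact lag interval using the uniform estimate $E_m$ together with the finite length $b-a$; the hypothesis $\alpha>1/2$ is exactly what guarantees integrability of the residual. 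This is the step I expect to be the main obstacle, since it requires taming the slowly (indeed non-) decaying spectral tail while still retaining the full exponential rate and a constant of order one. Taking the minimum of this bound with the $M_{\lfloor\alpha\rfloor,\kappa}$ branch, and collecting constants, yields \eqref{total_error_bound}.
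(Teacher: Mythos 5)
Your overall strategy coincides with the paper's: reduce both norms to one-dimensional statements about the lag function $g=\varrho_{m,\alpha}-\varrho_\alpha$ (your exact identity with the triangular weight is the paper's Lemma~\ref{lem:int_ineq}), factor the spectral error as $f_{m,\alpha}-f_\alpha=f_{\lfloor\alpha\rfloor}\cdot\bigl(R_m(g)-g^{\{\alpha\}}\bigr)$, and import the rate from Stahl's theorem on best uniform rational approximation of $x^{\{\alpha\}}$. Your treatment of the supremum bound is a genuine (and welcome) simplification: you bound $\sup_h|g(h)|$ by $\tfrac1{2\pi}\|f_{m,\alpha}-f_\alpha\|_{L_1(\mathbb{R})}$, which converges exactly when $\lfloor\alpha\rfloor\ge1$, whereas the paper routes the same estimate through a weighted $L_2$ bound in a Sobolev space $H^{s_\alpha}$ with $\nicefrac12<s_\alpha<2\lfloor\alpha\rfloor-\nicefrac12$ followed by the Sobolev embedding $H^{s_\alpha}\hookrightarrow L_\infty$. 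Both arguments need $\alpha>1$ for the same reason and yield the same conclusion; yours is more elementary. The $M_{\lfloor\alpha\rfloor,\kappa}$ branch of the $L_2$ bound is also fine (the paper gets it from Young's inequality $\|f\ast\phi\|_{L_2}\le\|f\|_{L_1}\|\phi\|_{L_2}$ applied to the sinc kernel; your route via $\sup_h|g|$ and the area of $I\times I$ gives the same rate with an even smaller constant).

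The genuine gap is the branch of \eqref{total_error_bound} with the constant $1$, which is the only available branch when $\nicefrac12<\alpha<1$ since $M_{0,\kappa}=\infty$. You correctly diagnose the obstruction (the best approximant does not vanish at $z=0$, so $f_{m,\alpha}-f_\alpha$ tends to a nonzero constant of size $O(E_m)$ and lies in neither $L_1$ nor $L_2$), but the proposed repair does not close the argument at the stated rate. After subtracting the constant tail $c_\infty$, the residual $h(w)=A\sigma^2\kappa^{-2\alpha}\bigl(R_m(g(w))-R_m(0)-g(w)^{\{\alpha\}}\bigr)$ satisfies $\|h\|_{L_\infty}=O(E_m)$, but its decay in $w$ is governed by the term $g(w)^{\{\alpha\}}\sim(\kappa/w)^{2\{\alpha\}}$, whose size is independent of $m$. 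Splitting $\mathbb{R}$ at the crossover point $|w|\sim\kappa E_m^{-1/(2\{\alpha\})}$ gives $\|h\|_{L_2(\mathbb{R})}^2\asymp E_m^{2-1/(2\{\alpha\})}$, so Plancherel yields only the rate $e^{-2\pi(1-1/(4\{\alpha\}))\sqrt{\{\alpha\}m}}$, not $e^{-2\pi\sqrt{\{\alpha\}m}}$; and restricting to the compact lag window $|h|\le b-a$ does not recover the loss, because $h$ varies on the long frequency scale $E_m^{-1/(2\{\alpha\})}$ and its inverse transform therefore concentrates near lag zero, inside the window. To keep the full rate with an $m$-independent constant you need to exploit the localization \emph{before} taking absolute values, which is what the paper does: it writes $(\varrho_{m,\alpha}-\varrho_\alpha)1_{[a-b,b-a]}$ on the spectral side as the convolution $(f_{m,\alpha}-f_\alpha)\ast\widehat{1_{[a-b,b-a]}}$ and bounds its $L_2$ norm by $\sup_w|f_{m,\alpha}(w)-f_\alpha(w)|\cdot\|\widehat{1_{[a-b,b-a]}}\|_{L_2(\mathbb{R})}$, which produces exactly the factor $2\sqrt{\pi}(b-a)$ and the full exponential rate. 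Either adopt that convolution estimate (and justify the $L_\infty$--$L_2$ bound for this particular convolution) or find another mechanism; as written, your plan for this branch would fail to deliver \eqref{total_error_bound} on $\nicefrac12<\alpha<1$.
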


\todo{\begin{remark}\label{rem:constant_C_alpha}
The constant $C_{\{\alpha\}}$ in \eqref{total_error_bound} comes from the error of the rational approximation 
$$
	\sup_{x\in [0,1]}\left|x^{\{\alpha\}} - \frac{p_m(x)}{q_m(x)}\right| \leq C_{\{\alpha\}} e^{-2\pi \sqrt{\{\alpha\} m}},
$$ 
where $C_{\{\alpha\}}$ is independent of $m$. 
Further, for $\alpha \in (0,1)$, by \citet[Theorem~2]{saff1995asymptotic}, there is an approximate expression for this uniform error that is valid for large $m$:
$$
  \sup_{x\in [0,1]}\left|x^{\{\alpha\}} - \frac{p_m(x)}{q_m(x)}\right| = 4^{{\{\alpha\}}+1} |\sin(\pi{\{\alpha\}})|e^{-2\pi \sqrt{\{\alpha\} m}} (1+ o(1)),\quad\text{as } m\to\infty.
$$
This expression provides an intuition on how the constant $C_{\{\alpha\}}$ behaves.
\end{remark}}


\todo{The theoretical error \eqref{total_error_bound} using the approximate expression of $C_{\{\alpha\}}$ given in Remark \ref{rem:constant_C_alpha} is shown in Figure~\ref{fig:theoretical_error} for $\rho = 2$, $\sigma =1$, and different values of $\nu$ and $m$ on the interval $I=[0,50]$. We can observe that the errors do not decay monotonically as $\{\alpha\}$ increases, but instead, they have a more complex behavior with respect to $\alpha$. However, they decay monotonically as $m$ increases. One should note that the true error bounds may look a bit different compared to those in  Figure \ref{fig:theoretical_error} as they are based on using $4^{{\{\alpha\}}+1} |\sin(\pi{\{\alpha\}})|$ in place of $C_{\{\alpha\}}$. See Figure \ref{fig:covarianceerror} for the actual covariance errors for this example.}

\begin{figure}[t]
  \begin{center}
	\includegraphics[width=0.8\linewidth]{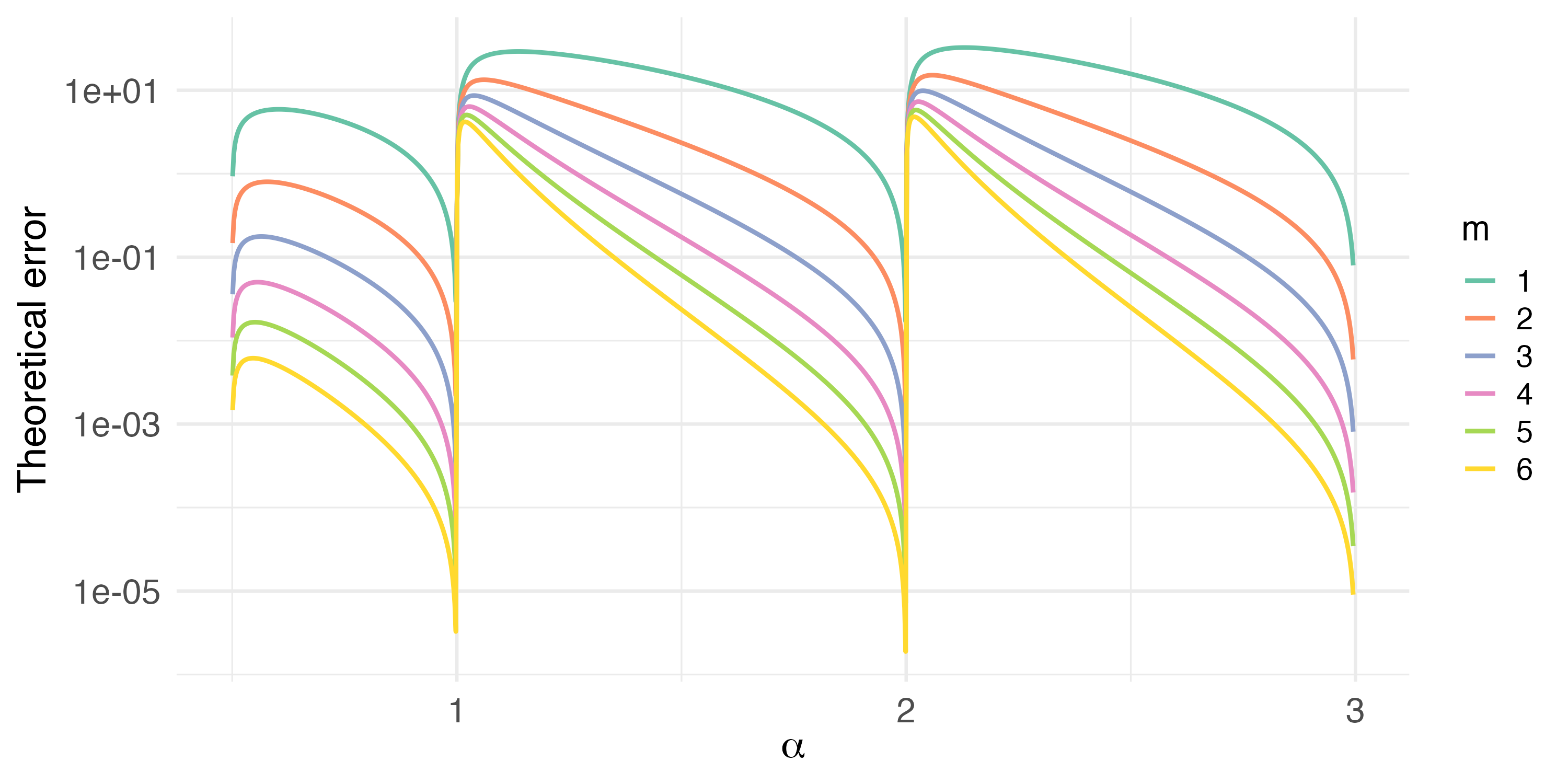}
  \end{center}
  \caption{\todo{Theoretical error bound from \eqref{total_error_bound} in Theorem \ref{ra_bound} using the approximation for $C_{\{\alpha\}}$ given in Remark \ref{rem:constant_C_alpha}.}}
  \label{fig:theoretical_error}
\end{figure}

\section{Linear cost inference}\label{sec:inference}
\todo{Our goal now is to use the rational approximation in \eqref{rational_approx} to obtain a linear cost approximation of the covariance function of the Gaussian process $u$.}
\todo{This approximation will based on a partial fractions decomposition of the rational function in the approximate spectral density. Therefore, we first derive an important property of such partial fractions decompositions.}

\begin{proposition}\label{prp:partial_fractions}
  \todo{Fix $\alpha \in (0,1)$ and $m\in\mathbb{N}$. Let the coefficients $\{a_i\}_{i=0}^m$ and $\{b_i\}_{i=0}^m$ be such that the best rational approximation of $x^{\alpha}$ on $[0,1]$ is given by \eqref{eq:rat_approx_01}. Further, let $P_m(x) = \sum_{i=0}^m a_i x^{m-i}$ and $Q_m = \sum_{i=0}^{m} b_i x^{m-i}$. Then, we have the following partial fractions decomposition of $P_m(x)/Q_m(x)$:
  $$\frac{P_m(x)}{Q_m(x)} = k + \sum_{i=1}^m \frac{c_i}{x - p_i},$$
  where $k, c_i>0$ and $p_i<0$ for $i=1,\ldots,m$.}
\end{proposition}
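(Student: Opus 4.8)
The plan is to pass to the reciprocal variable and recognize the problem as best rational approximation of a Stieltjes (Markov) function, for which the partial-fraction structure is forced by an orthogonality relation. First I would record the identity $P_m(x)/Q_m(x) = r(1/x)$, where $r(y)=\bigl(\sum_{i=0}^m a_i y^i\bigr)/\bigl(\sum_{i=0}^m b_i y^i\bigr)$ is the best rational approximant of $y^{\alpha}$ on $[0,1]$: indeed $P_m(x)=x^m\sum_i a_i x^{-i}=x^m\,r_{\mathrm{num}}(1/x)$ and likewise for $Q_m$. Since $y\mapsto 1/y$ maps $[0,1]$ onto $[1,\infty]$ and preserves the sup-norm error, $P_m/Q_m$ is the best rational approximant of type $(m,m)$ to $x^{-\alpha}$ on $[1,\infty)$. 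The decisive structural input is that $x^{-\alpha}$ is a Stieltjes function, $x^{-\alpha}=\tfrac{\sin(\pi\alpha)}{\pi}\int_{0}^{\infty}\tfrac{t^{-\alpha}}{x+t}\,dt$, i.e. the Cauchy transform of a positive measure supported on the negative real axis.

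Next I would extract the poles. By uniqueness and normality of the best approximant to the non-rational function $y^{\alpha}$ \citep[Ch.~7.2]{lorentz1996constructive}, the error equioscillates at exactly $2m+2$ points, so $x^{-\alpha}-P_m/Q_m$ has $2m+1$ sign-change zeros $x_1<\cdots<x_{2m+1}$ in $(1,\infty)$; set $w(x)=\prod_{j=1}^{2m+1}(x-x_j)$. Writing $G=x^{-\alpha}Q_m-P_m$, one checks that $G/w$ is analytic off $(-\infty,0]$ (the nodes are removable because $G$ vanishes there) and decays like $x^{-m-1}$ at infinity. Integrating $\tfrac{G(x)}{w(x)}\pi(x)$ over a large contour and collapsing it onto the branch cut, where only $x^{-\alpha}$ jumps, yields the orthogonality relation
\begin{equation}\label{eq:orth}
\int_{-\infty}^{0}\frac{Q_m(t)\,\pi(t)}{w(t)}\,|t|^{-\alpha}\,dt=0,\qquad \deg\pi\le m-1 .
\end{equation}
The key point is that every node satisfies $x_j>1>0$, so $w(t)<0$ for all $t<0$; hence the weight $|t|^{-\alpha}/w(t)$ has constant sign on $(-\infty,0)$. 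Thus $Q_m$ is, up to a scalar, the degree-$m$ orthogonal polynomial of a positive measure on $(-\infty,0)$ and therefore has exactly $m$ simple zeros $p_1,\dots,p_m\in(-\infty,0)$. In particular $\deg P_m=\deg Q_m=m$, which is why a constant term $k$ appears in the decomposition.

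Finally I would settle the signs. The residues $c_i=P_m(p_i)/Q_m'(p_i)$ are precisely the Christoffel (Gauss-type quadrature) weights attached to the constant-sign orthogonality \eqref{eq:orth}; testing \eqref{eq:orth} against the squared Lagrange factors $\prod_{j\neq i}(x-p_j)$ shows each $c_i$ equals a positive multiple of $\int_{-\infty}^0(\cdots)^2\,|t|^{-\alpha}/|w(t)|\,dt>0$, so $c_i>0$ and, automatically, $r-k=\sum_i c_i/(x-p_i)$ is itself a Stieltjes function. For the constant, note $k=\lim_{x\to\infty}P_m/Q_m=r(0)$ in the original variable, i.e. the value of the best approximant of $y^{\alpha}$ at the singular endpoint $y=0$; since the smooth approximant must overshoot the cusp of $y^{\alpha}$ there, $y=0$ is a point of maximal deviation with $y^{\alpha}-r<0$, giving $k=r(0)=E>0$, where $E$ is the approximation error.

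The main obstacle is the middle step: turning the equioscillation of the best \emph{uniform} approximant into the clean orthogonality \eqref{eq:orth} and justifying the constant sign of the weight. This requires care that the best approximant genuinely interpolates $x^{-\alpha}$ at the $2m+1$ error zeros, that $\deg Q_m$ is exactly $m$ (normality), and that the contour collapse and decay estimates are valid; these are the standard but delicate ingredients of the theory of rational approximation of Markov functions. The strict positivity $k>0$ likewise rests on the extremality of the singular endpoint for $y^{\alpha}$, $\alpha\in(0,1)$, which I would either invoke from the detailed description of the best approximant of $x^{\alpha}$ or verify directly from the alternation pattern.
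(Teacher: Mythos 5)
Your route is genuinely different from the paper's. The paper imports the entire structural input from \citet[Lemma~2.1]{saff1995asymptotic} (restated as Lemma~\ref{lem:roots_rat_approx} in the appendix): the best approximant of $x^{\alpha}$ has numerator and denominator of exact degree $m$ with all zeros and poles negative and \emph{interlacing}. After the reciprocal substitution, everything else in the paper is elementary algebra: $k=m_P/m_Q>0$ because positivity of $x^{\alpha}$ on $(0,1)$ forces the leading coefficients to have equal sign, and $c_i=P_m(p_i)/Q_m'(p_i)>0$ by counting how many zeros and poles lie to the right of $p_i$, which match up precisely because of the interlacing. You instead re-derive the pole locations from first principles via the Markov-function machinery: equioscillation gives $2m+1$ interpolation nodes in $(1,\infty)$, the contour-collapse argument gives orthogonality of $Q_m$ against a sign-definite weight on $(-\infty,0)$ (the node polynomial $w$ has odd degree and all roots positive, so it is sign-definite on the negative axis), and hence $Q_m$ has $m$ simple negative roots. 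That part is correct and self-contained, and it buys you a proof that does not lean on the Saff--Stahl structural lemma; the cost is that it is considerably heavier, and it only recovers \emph{part} of that lemma -- you get the poles but never the zeros of $P_m$, nor the interlacing, which is exactly what the paper uses to settle the signs of $k$ and the $c_i$.

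That is where the gaps are. For $k$: your argument is $k=\lim_{x\to\infty}P_m/Q_m=r(0)$, and then you assert that $y=0$ is an extreme point of the error with $y^{\alpha}-r(y)=-E$ there, so $r(0)=E>0$. This is true, but it is not a consequence of anything you have established; "the smooth approximant must overshoot the cusp" is a heuristic, and making it rigorous requires the detailed description of the alternation set of $x^{\alpha}$ at the singular endpoint -- an input of the same depth as the lemma the paper cites. (The soft bounds available from your setup only give $|k|\le E$, not the sign.) For $c_i$: the identification of the residues with Christoffel-type weights is the right idea, but the measure in your orthogonality relation is the \emph{varying-weight} measure $|t|^{-\alpha}\,dt/|w(t)|$, whereas the Gauss-quadrature representation $c_i=\int \ell_i^2\,d(\cdot)>0$ requires an integral representation of the numerator $P_m-kQ_m$ as a function of the second kind; obtaining that representation is a second contour-integration step (this is the content of the generalized Markov theorem for multipoint Pad\'e approximants of Stieltjes functions) that you do not carry out. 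Both gaps are fillable, but as written the proof establishes $p_i<0$ rigorously and only sketches $c_i>0$ and $k>0$. If you are going to invoke the literature anyway, the paper's choice -- cite the interlacing lemma and finish by elementary sign-counting -- is the shorter and cleaner path.
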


\begin{remark}
  \todo{Proposition \ref{prp:partial_fractions} fills a theoretical gap left in \cite{bolin2024covariance}, where such a decomposition, along with the signs of $k$, $c_i$, and $p_i$ for $i=1,\ldots,m$, was verified numerically.}
\end{remark}

\todo{In view of Proposition \ref{prp:partial_fractions}}, we can perform a partial fraction decomposition of the rational function $P_m(x)/Q_m(x)$ in \eqref{rational_approx} \todo{to obtain that the spectral density of the approximation is}
\begin{equation}\label{partial_frac}
 \begin{aligned}
f_{m,\alpha}(w)&= A\sigma^2\kappa^{-2\alpha}\left[\frac{k}{(1+\kappa^{-2}w^2)^{\lfloor \alpha \rfloor}}+\sum_{i=1}^{m}c_i\frac{1}{(1+\kappa^{-2}w^2)^{\lfloor \alpha \rfloor}(1+\kappa^{-2}w^2-p_i)}\right]\\
&=:\left[f_{m,0,\alpha}(w)+\sum_{i=1}^{m}f_{m,i,\alpha}(w)\right],
\end{aligned}
 \end{equation}
\todo{where $k,c_i>0$ and $p_i<0$ for $i=1,\ldots,m$, and $A=\sqrt{2}\kappa^{2\nu}\Gamma(\nu+\nicefrac{1}{2})\Gamma(\nu)^{-1}$. Thus, we obtain that $f_{m,\alpha}(\cdot)$ can be decomposed as a sum of valid spectral densities. Hence, we can write $r_{m,\alpha}(\cdot,\cdot)$, given in \eqref{eq:approx_cov_inv_fourier}, as a sum of covariance functions}. 

Let $\varrho_{m,\alpha}(\cdot)$ be defined as $\varrho_{m,\alpha}(t-s) = r_{m,\alpha}(t,s)$. Based on \eqref{partial_frac}, we obtain the following explicit expression of the approximated covariance function $\varrho_{m,\alpha}(\cdot)$.

\begin{proposition}\label{cov_prop}
Let $u$ be a Gaussian process with spectral density \eqref{partial_frac}. Then, it has covariance function 
\begin{equation}\label{covrational}
\varrho_{m,\alpha}(h)=\varrho_{m,0,\alpha}(h)+\displaystyle\sum_{i=1}^{m}\varrho_{m,i,\alpha}(h),
\end{equation}
where 
$$
\varrho_{m,0,\alpha}(h)=
k\sigma^2\cdot\begin{cases}
\frac{c_{\alpha}\sqrt{4\pi}}{\kappa} 1_{[h=0]} &0<\alpha<1,\\
\varrho\left(h; \lfloor \alpha \rfloor -\frac{1}{2}, \kappa, \frac{c_{\alpha}}{c_{\lfloor\alpha\rfloor}}\right)  & \alpha\geq 1,
\end{cases}
$$
and
$$
\varrho_{m,i,\alpha}(h)= c_i\sigma^2\cdot
\begin{cases}
\varrho\left(h;\frac{1}{2}, \kappa_i, \frac{c_{\alpha}\sqrt{\pi}}{\sqrt{1-p_i}}\right)
& 0<\alpha<1,\\
\frac{1}{p_i^{\lfloor \alpha \rfloor}}\varrho\left(h;\frac{1}{2}, \kappa_i,\frac{c_{\alpha}\sqrt{\pi}}{\sqrt{1-p_i} }\right) -
\displaystyle\sum_{j=1}^{\lfloor \alpha \rfloor}
\frac{1}{p_i^{\lfloor \alpha \rfloor+1-j}}\varrho\left(h; j - \frac{1}{2}, \kappa, 
\frac{c_{\alpha}}{c_j }\right) & \alpha\geq 1.
\end{cases}
$$
Here $c_{a} := \nicefrac{\Gamma(a)}{\Gamma(a-\nicefrac{1}{2})}$, $\kappa_i = \kappa\sqrt{1-p_i}$, and $\varrho$ is the Mat\'ern covariance \eqref{eq:matern_cov}.
\end{proposition}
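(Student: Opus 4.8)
The plan is to invert the Fourier transform in \eqref{eq:approx_cov_inv_fourier} term by term over the decomposition \eqref{partial_frac}, using that each summand there is a constant multiple of a Matérn spectral density whose inverse transform is a known Matérn covariance. The only input needed is the correspondence recalled in Section~\ref{sec:rationalapprox}: $\varrho(\cdot;\nu,\kappa,\sigma^2)$ has spectral density $A\sigma^2(\kappa^2+w^2)^{-\alpha}$ with $\alpha=\nu+\nicefrac12$ and $A=\sqrt2\kappa^{2\nu}\Gamma(\nu+\nicefrac12)\Gamma(\nu)^{-1}$. Rewriting $A$ through $c_a=\Gamma(a)/\Gamma(a-\nicefrac12)$ yields the identity $A\kappa^{-2\alpha}=\sqrt2\kappa^{-1}c_\alpha$, together with the per-smoothness normalizers $\sqrt2\kappa^{2j-1}c_j$ for $\nu=j-\nicefrac12$; these are what I would use to pin down every multiplicative constant.

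First I would normalize the quadratic factors appearing in \eqref{partial_frac}. Using $1+\kappa^{-2}w^2=\kappa^{-2}(\kappa^2+w^2)$ and, with $\kappa_i:=\kappa\sqrt{1-p_i}$, the identity $1+\kappa^{-2}w^2-p_i=\kappa^{-2}(\kappa_i^2+w^2)$, every factor becomes a power of $(\kappa^2+w^2)$ or of $(\kappa_i^2+w^2)$ up to an explicit power of $\kappa$. This settles the case $0<\alpha<1$ at once, where $\lfloor\alpha\rfloor=0$: the term $f_{m,0,\alpha}$ is constant in $w$, so it inverts to a white-noise (point-mass) contribution, giving the $1_{[h=0]}$ term; and each $f_{m,i,\alpha}$ is a multiple of $(\kappa_i^2+w^2)^{-1}$, i.e.\ the spectral density of an exponential (Matérn, $\nu=\nicefrac12$) covariance with range $\kappa_i$, whose variance I would read off as $c_\alpha\sqrt\pi/\sqrt{1-p_i}$ via the constant-matching identity.

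For $\alpha\geq1$ the new ingredient is a secondary partial fraction decomposition. Writing $z=1+\kappa^{-2}w^2$, I would expand
\[
\frac{1}{z^{\lfloor\alpha\rfloor}(z-p_i)}=\frac{1}{p_i^{\lfloor\alpha\rfloor}(z-p_i)}-\sum_{j=1}^{\lfloor\alpha\rfloor}\frac{1}{p_i^{\lfloor\alpha\rfloor+1-j}}\frac{1}{z^{j}},
\]
verified directly or by induction on $\lfloor\alpha\rfloor$. Substituting back, the $(z-p_i)^{-1}$ term is again a Matérn-$\nicefrac12$ spectral density with range $\kappa_i$, while each $z^{-j}=\kappa^{2j}(\kappa^2+w^2)^{-j}$ is the spectral density of a Matérn covariance with $\nu=j-\nicefrac12$ and range $\kappa$. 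Inverting term by term, collecting the $\kappa$-powers, and carrying the signs from the decomposition produces exactly the stated $\varrho_{m,i,\alpha}$, with variance parameters $c_\alpha/c_j$ and $c_\alpha\sqrt\pi/\sqrt{1-p_i}$; the same computation applied to $f_{m,0,\alpha}=A\sigma^2\kappa^{-2\alpha}k\,\kappa^{2\lfloor\alpha\rfloor}(\kappa^2+w^2)^{-\lfloor\alpha\rfloor}$ gives the Matérn-$(\lfloor\alpha\rfloor-\nicefrac12)$ term.

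The main obstacle is bookkeeping rather than conceptual: one must track the powers of $\kappa$ released by each normalization and reconcile every constant through $A\kappa^{-2\alpha}=\sqrt2\kappa^{-1}c_\alpha$ and the half-integer Matérn normalizers. A secondary subtlety is the constant term for $0<\alpha<1$: I would interpret \eqref{eq:approx_cov_inv_fourier} distributionally, so that the inverse transform of a constant spectral density is a point mass at $h=0$, and fix its normalization accordingly to obtain the $1_{[h=0]}$ contribution.
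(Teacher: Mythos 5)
Your proposal is correct and follows essentially the same route as the paper: the secondary partial-fraction identity you state for $1/\bigl(z^{\lfloor\alpha\rfloor}(z-p_i)\bigr)$ is, after the substitution $z=\kappa^{-2}(\kappa^2+w^2)$, exactly the geometric-sum identity the paper verifies, and the rest is the same term-by-term inverse Fourier transform of Matérn spectral densities with constant matching via $A\kappa^{-2\alpha}=\sqrt{2}\,\kappa^{-1}c_\alpha$. Your explicit treatment of the $0<\alpha<1$ nugget term as a distributional inverse transform of a constant is a point the paper leaves implicit, but it introduces no new idea.
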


Since the spectral density in \eqref{partial_frac} is a sum of valid spectral densities $f_{m,0,\alpha}$ and $f_{m,i,\alpha}$ for $i = 1, \ldots, m$, a Gaussian process with the spectral density given by \eqref{partial_frac} can be expressed as a sum of independent Gaussian processes
\begin{equation}\label{approxsol}
    u(x) = u_0(x) + u_1(x) + \dotsb + u_m(x),
\end{equation}
where $u_0$ has spectral density $f_{m,0,\alpha}$ and each $u_i$ has spectral density $f_{m,i,\alpha}$ for $i = 1, 2, \dotsc, m$. All these spectral densities are reciprocals of polynomials, implying that each process $u_i$, for $i = 0, \ldots, m$, is a Gaussian Markov process \cite[Theorem 10.1]{pitt1971markov}.
Specifically, $u_0$ is a Markov process of order $\max(\lfloor \alpha \rfloor, 1)$, and it is $\max(\lfloor \alpha \rfloor - 1, 0)$ times differentiable in the mean-squared sense. Moreover, for $i > 0$, each $u_i$ is a Markov process of order $\lceil \alpha \rceil$, and it is $\lfloor \alpha \rfloor$ times differentiable in the mean-squared sense. As a result, the multivariate process $\mv{u}_0(t) = \left( u_0(t), u_0'(t), \ldots, u_0^{(\max(\lfloor \alpha \rfloor - 1, 0))}(t) \right)$ is a first-order Markov process with a multivariate covariance function
$$
\mv{r}_0 : \mathbb{R} \times \mathbb{R} \rightarrow \mathbb{R}^{\lfloor \alpha \rfloor \times \lfloor \alpha \rfloor}, \quad \mv{r}_0(s,t) = \left[\frac{\partial^{i-1}}{\partial s^{i-1}}\frac{\partial^{j-1}}{\partial t^{j-1}}\varrho_{m,0,\alpha}(s-t)\right]_{i,j\in \{1,\ldots, \max(\lfloor\alpha\rfloor,1)\}}.
$$
Similarly, the multivariate process $\mv{u}_i(t) = \left( u_0(t), u_0'(t), \ldots, u_0^{(\lfloor \alpha \rfloor)}(t) \right)$, for $i \in \{1, \ldots, m\}$, is a first-order Markov process with a multivariate covariance function  
\[
\mv{r}_i : \mathbb{R} \times \mathbb{R} \rightarrow \mathbb{R}^{\lceil \alpha \rceil \times \lceil \alpha \rceil}, \quad 
\mv{r}_i(s,t) = \left[ \frac{\partial^{k-1}}{\partial s^{k-1}} \frac{\partial^{\ell-1}}{\partial t^{\ell-1}} \varrho_{m,i,\alpha}(s - t) \right]_{k, \ell \in \{1, \ldots, \lceil \alpha \rceil\}}.
\]
Since these multivariate processes are first-order Markov, we can derive the following result regarding their finite-dimensional distributions.

\begin{proposition}\label{ra_prec}
Consider a set of \todo{unique} locations $t_1, \ldots, t_n \in I$. For $j = 1, \ldots, n$, define the vectors  
\[
\mv{u}_{0,j} = \left[ u_0(t_j), u_0'(t_j), \ldots, u_0^{(\max(\lfloor \alpha \rfloor - 1, 0))}(t_j) \right], 
\quad \text{and} \quad 
\mv{u}_{i,j} = \left[ u_i(t_j), u_i'(t_j), \ldots, u_i^{(\lfloor \alpha \rfloor)}(t_j) \right], 
\]  
for $i \in \{1, \ldots, m\}$. Then, the concatenated vector $\mv{u}_i = \left[ \mv{u}_{i,1}, \ldots, \mv{u}_{i,n} \right]$ is a centered Gaussian random variable with a block tridiagonal precision matrix given by  
\begin{equation}\label{precmat}
\mv{Q}_{i} = 
\begin{bmatrix*}[l]
\mv{Q}_{1,1} & \mv{Q}_{1,2} & & & & \\
\mv{Q}_{2,1} & \mv{Q}_{2,2} & \mv{Q}_{2,3} & & & \\
& \mv{Q}_{3,2} & \mv{Q}_{3,3} & \mv{Q}_{3,4} &  &  \\
& & \ddots &  \ddots   &  \ddots &  \\
& & & \mv{Q}_{n-1,n-2} & \mv{Q}_{n-1,n-1} &  \mv{Q}_{n-1,n} \\
\phantom{\mv{Q}_{n-1,n-1}} & \phantom{\mv{Q}_{n-1,n-1}} & \phantom{\mv{Q}_{n-1,n-1}} & \phantom{\mv{Q}_{n-1,n-1}} & \mv{Q}_{n,n-1} & \mv{Q}_{n,n}
\end{bmatrix*},
\end{equation}
for $i = 0, 1, \ldots, m$. 
The first set of blocks, $\mv{Q}_{1,1}, \mv{Q}_{1,2},$ and $\mv{Q}_{2,1}$, are obtained as  
\begin{equation}\label{eq:qsolve1}
\begin{bmatrix}
\mv{Q}_{1,1} & \mv{Q}_{1,2} \\
\mv{Q}_{2,1} & \mv{M}
\end{bmatrix} = 
\begin{bmatrix}
\mv{r}_i(t_1, t_1) & \mv{r}_i(t_1, t_2) \\
\mv{r}_i(t_2, t_1) & \mv{r}_i(t_2, t_2)
\end{bmatrix}^{-1},
\end{equation}
where $\mv{M}$ is a dummy variable that is discarded. 
For $j \in \{2, \ldots, n - 1\}$, the subsequent blocks $\mv{Q}_{j,j}, \mv{Q}_{j,j+1}, \mv{Q}_{j+1,j},$ and $\mv{Q}_{j+1,j+1}$ are obtained as  
\begin{equation}\label{eq:qsolve2}
\begin{bmatrix}
\mv{M}_1 & \mv{M}_2 & \mv{0} \\
\mv{M}_2 & \mv{Q}_{j,j} & \mv{Q}_{j,j+1} \\
\mv{0} & \mv{Q}_{j+1,j} & \mv{Q}_{j+1,j+1}
\end{bmatrix} = 
\begin{bmatrix}
\mv{r}_i(t_{j-1}, t_{j-1}) & \mv{r}_i(t_{j-1}, t_j) & \mv{r}_i(t_{j-1}, t_{j+1}) \\
\mv{r}_i(t_j, t_{j-1}) & \mv{r}_i(t_j, t_j) & \mv{r}_i(t_j, t_{j+1}) \\
\mv{r}_i(t_{j+1}, t_{j-1}) & \mv{r}_i(t_{j+1}, t_j) & \mv{r}_i(t_{j+1}, t_{j+1})
\end{bmatrix}^{-1},
\end{equation}
where $\mv{M}_1$ and $\mv{M}_2$ are dummy variables that are also discarded.
\end{proposition}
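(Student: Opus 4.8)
The plan is to exploit the first-order Markov property of the multivariate process $\mv{u}_i(\cdot)$ established just above the statement: for the (without loss of generality ordered, after relabelling) unique locations $t_1<\cdots<t_n$, the finite-dimensional vector $\mv{u}_i=[\mv{u}_{i,1},\ldots,\mv{u}_{i,n}]$ is a Gaussian Markov chain. Centredness and joint Gaussianity are immediate, since each entry of $\mv{u}_{i,j}$ is a mean-square derivative of the centred Gaussian process $u_i$ up to the order at which $u_i$ is differentiable, hence a centred Gaussian variable, and finite collections of such variables are jointly Gaussian. Positive definiteness of the joint covariance (so that the precision matrix $\mv{Q}_i$ exists) follows from strict positivity of the spectral densities $f_{m,i,\alpha}$, and also guarantees that every marginal covariance appearing below is invertible.

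First I would write the joint density through the Markov factorization $p(\mv{u}_{i,1},\ldots,\mv{u}_{i,n})=p(\mv{u}_{i,1})\prod_{j=2}^{n}p(\mv{u}_{i,j}\mid\mv{u}_{i,j-1})$. Each factor is Gaussian, and the transition factor $p(\mv{u}_{i,j}\mid\mv{u}_{i,j-1})$ depends only on the consecutive pair $(\mv{u}_{i,j-1},\mv{u}_{i,j})$. Collecting the quadratic forms in the exponent shows that $-2\log p$ is a sum of terms each coupling at most two consecutive blocks, so $\mv{Q}_i$ has no nonzero blocks off its three central diagonals, which is the block tridiagonal structure \eqref{precmat}. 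Equivalently, the global Markov property of the path graph gives $\mv{u}_{i,j}\perp\mv{u}_{i,k}\mid(\text{all others})$ whenever $|j-k|\ge 2$, which is exactly the vanishing of the corresponding precision blocks.

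To obtain the explicit blocks I would rewrite the factorization in clique--separator form, $p(\mv{u}_{i,1},\ldots,\mv{u}_{i,n})=\prod_{j=1}^{n-1}p(\mv{u}_{i,j},\mv{u}_{i,j+1})\big/\prod_{j=2}^{n-1}p(\mv{u}_{i,j})$, obtained from $p(\mv{u}_{i,j}\mid\mv{u}_{i,j-1})=p(\mv{u}_{i,j-1},\mv{u}_{i,j})/p(\mv{u}_{i,j-1})$. Reading off the resulting quadratic form, $\mv{Q}_{j,j+1}$ equals the $(1,2)$ block of the inverse of the pair marginal covariance $\bigl[\mv{r}_i(t_a,t_b)\bigr]_{a,b\in\{j,j+1\}}$, while for an interior index $\mv{Q}_{j,j}$ equals the $(2,2)$ block of the inverse pair covariance of $(t_{j-1},t_j)$ plus the $(1,1)$ block of the inverse pair covariance of $(t_j,t_{j+1})$, minus the inverse of the single-node covariance $\mv{r}_i(t_j,t_j)$. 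Applying the identical clique--separator identity to the three-node subchain $(t_{j-1},t_j,t_{j+1})$ then shows that the inverse of the $3\times3$ block matrix on the right of \eqref{eq:qsolve2} has a vanishing $(1,3)$ block and has $(2,2)$ and $(2,3)$ blocks equal precisely to these expressions for $\mv{Q}_{j,j}$ and $\mv{Q}_{j,j+1}$; the same reasoning applied to the endpoint pair $(t_1,t_2)$ yields \eqref{eq:qsolve1}.

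The point that needs care, and the main obstacle, is the status of the $(3,3)$ block produced by \eqref{eq:qsolve2}. This block equals only the $(1,1)$-part of the inverse pair covariance of $(t_j,t_{j+1})$ and is therefore not the final $\mv{Q}_{j+1,j+1}$ when $j+1<n$, because the true interior diagonal block at node $j+1$ additionally receives the contribution of the clique $\{j+1,j+2\}$ and the separator subtraction at node $j+1$, neither of which the triple $(t_{j-1},t_j,t_{j+1})$ can see. The resolution is that the scheme is sequential: this provisional value is overwritten when the $(2,2)$ block is computed at the next step $j+1$, where node $j+1$ sits at the centre of its own triple and the formula becomes exact. The single exception is the last step $j=n-1$: since $t_n$ is an endpoint it belongs to no clique to its right and is never a separator, so the $(3,3)$ block there is already the correct $\mv{Q}_{n,n}$. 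Verifying this overwriting and endpoint bookkeeping, together with the elementary check that each triple and pair marginal covariance is invertible, completes the argument.
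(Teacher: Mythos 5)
Your proposal is correct and follows essentially the same route as the paper: both rest on the first-order Markov property of the multivariate processes $\mv{u}_i(\cdot)$ (which the paper additionally derives from Pitt's theorem applied to the spectral densities, but which is stated in the text immediately before the proposition, so taking it as given is legitimate) and then read off the precision blocks from standard Gaussian conditional-independence computations; the paper defers these to the conditional autoregressive calculations of Rue and Held, which amount to your clique--separator factorization. The one point where you go beyond the paper's proof is in flagging that the $(3,3)$ block produced by \eqref{eq:qsolve2} equals $\bigl(\Cov(\mv{u}_{i,j+1}\mid\mv{u}_{i,j})\bigr)^{-1}$ rather than the true $\mv{Q}_{j+1,j+1}$ whenever $j+1<n$, so that the recursion must be read sequentially with that block overwritten at step $j+1$ and only the final step $j=n-1$ supplying $\mv{Q}_{n,n}$; this is a genuine subtlety in the statement that the paper's proof does not address, and your resolution is the correct one.
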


The inverses in \eqref{eq:qsolve1} and \eqref{eq:qsolve2} can be computed analytically. However, since these matrices have sizes $2 \max(\lfloor \alpha \rfloor, 1) \times 2 \max(\lfloor \alpha \rfloor, 1)$ and $3 \max(\lfloor \alpha \rfloor, 1) \times 3 \max(\lfloor \alpha \rfloor, 1)$, respectively, for the process $\mv{u}_0$, and sizes $2 \lceil \alpha \rceil \times 2 \lceil \alpha \rceil$ and $2(\lceil \alpha \rceil + 1) \times 2(\lceil \alpha \rceil + 1)$, respectively, for the process $\mv{u}_i$ with $i > 0$, their numerical inversion is computationally trivial. As a result, the benefit of deriving analytical expressions is negligible.

\begin{remark}
If $\alpha\in\mathbb{N}$, there is no need for a rational approximation since the process itself is Markov. In this case, we can derive the precision matrix for the process and its derivatives using the same strategy as for $\mv{u}_i$ in Proposition~\ref{ra_prec} but where the covariance function $\mv{r}_i$ is replaced by the multivariate covariance function of $u$ and its derivatives. We do not go into more details as there already are exact methods for $\alpha\in\mathbb{N}$. It should, however, be noted that the costs of using this exact method are the same as those we discuss below with $m=1$.    
\end{remark}

We now demonstrate how these expressions can be utilized for computationally efficient sampling, inference, and prediction. Let $t_1, \ldots, t_n$ be a set of locations in $I$, and define the vector {$\mv{U}_i = \left[ \mv{u}_i(t_1), \ldots, \mv{u}_i(t_n) \right]$} for $i = 0, \ldots, m$. Since these multivariate processes are independent, the vector  
$
\bar{\mv{U}} = \left[ \mv{U}_0^\top, \ldots, \mv{U}_m^\top \right]^\top
$
is a centered multivariate Gaussian random variable of dimension  
$
N = n \left( m \lceil \alpha \rceil + \max(\lfloor \alpha \rfloor, 1) \right),
$
with a block diagonal precision matrix $\mv{Q} = \operatorname{diag}(\mv{Q}_0, \mv{Q}_1, \ldots, \mv{Q}_m)$, where each block is obtained using Proposition~\ref{ra_prec}.

Because $\mv{Q}$ is a band matrix, the following result provides the computational cost of computing its Cholesky factor.

\begin{proposition}\label{prop:chol}
Computing the Cholesky factor $\mv{R}$ of $\mv{Q}$, $\mv{Q} = \mv{R}^\top\mv{R}$, requires 
$\mathcal{O}(nm\lceil\alpha\rceil^3)$ floating point operations, given that $\alpha \ll n$. 
Further, solving $\mv{Y} = \mv{R}^{-1}\mv{X}$ for some vector $\mv{X} \in \mathbb{R}^{N}$ requires $\mathcal{O}(nm\lceil\alpha\rceil^2)$ floating point operations.
\end{proposition}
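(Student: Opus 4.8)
The plan is to exploit two structural features of $\mv{Q}$ separately: it is block diagonal across the $m+1$ independent processes, and each diagonal block is itself banded. Since $\mv{Q} = \operatorname{diag}(\mv{Q}_0, \ldots, \mv{Q}_m)$, its Cholesky factor is the block-diagonal matrix $\mv{R} = \operatorname{diag}(\mv{R}_0, \ldots, \mv{R}_m)$ with $\mv{Q}_i = \mv{R}_i^\top \mv{R}_i$, so the total factorization cost is the sum of the individual costs, and likewise the solve $\mv{Y} = \mv{R}^{-1}\mv{X}$ decouples into $m+1$ independent banded triangular solves. It therefore suffices to bound the cost for a single block $\mv{Q}_i$ and multiply by $m+1$.

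First I would determine the bandwidth of each block. By Proposition~\ref{ra_prec}, each $\mv{Q}_i$ is block tridiagonal with $n$ diagonal blocks, and each $\mv{Q}_i$ is symmetric positive definite, being the precision matrix of a nondegenerate Gaussian vector. For $i>0$ the blocks are $\lceil\alpha\rceil\times\lceil\alpha\rceil$, so $\mv{Q}_i$ has dimension $M_i = n\lceil\alpha\rceil$; a block tridiagonal matrix with $b\times b$ blocks has half-bandwidth $2b-1$, hence here $p_i = 2\lceil\alpha\rceil-1 = \mathcal{O}(\lceil\alpha\rceil)$. For $i=0$ the blocks are $\max(\lfloor\alpha\rfloor,1)\times\max(\lfloor\alpha\rfloor,1)$, giving dimension $n\max(\lfloor\alpha\rfloor,1)$ and, since $\max(\lfloor\alpha\rfloor,1)\leq\lceil\alpha\rceil$, the same bandwidth order $\mathcal{O}(\lceil\alpha\rceil)$.

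Next I would invoke the standard result from numerical linear algebra for banded Cholesky factorization: for a symmetric positive definite matrix of dimension $M$ with half-bandwidth $p\ll M$, the Cholesky factor is triangular with the same half-bandwidth $p$, its computation requires $\mathcal{O}(Mp^2)$ floating point operations, and solving the associated triangular system against a single right-hand side requires $\mathcal{O}(Mp)$ operations. The hypothesis $\alpha\ll n$ is precisely what guarantees $p_i\ll M_i$, so that the leading-order estimate $\mathcal{O}(M_i p_i^2)$ is valid and boundary effects near the first and last blocks are negligible. Applying this to a single block yields a factorization cost of $\mathcal{O}(M_i p_i^2) = \mathcal{O}(n\lceil\alpha\rceil\cdot\lceil\alpha\rceil^2) = \mathcal{O}(n\lceil\alpha\rceil^3)$ and a solve cost of $\mathcal{O}(M_i p_i) = \mathcal{O}(n\lceil\alpha\rceil^2)$.

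Finally, summing over the $m+1$ blocks gives a total Cholesky cost of $\mathcal{O}((m+1)\,n\lceil\alpha\rceil^3) = \mathcal{O}(nm\lceil\alpha\rceil^3)$ and a total solve cost of $\mathcal{O}((m+1)\,n\lceil\alpha\rceil^2) = \mathcal{O}(nm\lceil\alpha\rceil^2)$, as claimed. I do not anticipate a serious obstacle: the argument is an application of well-known banded-matrix complexity estimates once the bandwidth is correctly identified. The only points requiring slight care are the bookkeeping of the bandwidth of a block tridiagonal matrix (which is twice the block size, not the block size itself) and making explicit that the $\alpha\ll n$ assumption is exactly what allows dropping the lower-order boundary terms in the banded factorization count.
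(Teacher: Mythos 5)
Your proposal is correct and takes essentially the same approach as the paper: the paper simply treats $\mv{Q}$ directly as a single $N\times N$ band matrix of bandwidth $2\lfloor\alpha\rfloor+1$ (equal to your $2\lceil\alpha\rceil-1$ for $\alpha\notin\mathbb{N}$) and cites the explicit flop counts $n(p^2+3p)$ and $2np$ from Algorithm 2.9 of Rue and Held, whereas you first decouple over the $m+1$ diagonal blocks before applying the same banded-factorization estimate. The two routes are arithmetically identical and yield the same bounds.
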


Thus, the computational cost for sampling $\bar{\mv{U}}$ by first computing the Cholesky factor $\mv{R}$ of $\mv{Q}$ and then solving $\bar{\mv{U}} = \mv{R}^{-1}\mv{Z}$ for a vector $\mv{Z}$ with independent standard Gaussian elements can be done in $\mathcal{O}(nm\lceil\alpha\rceil^{3})$ computational cost. Moreover, introduce the sparse matrix $\mv{A} = [\mv{A}_0,\ldots, \mv{A}_m]$ where $\mv{A}_0$ is the sparse $n\times n\max(\lfloor \alpha \rfloor,1)$ matrix that extracts the values $u_0(t_1),\ldots, u_0(t_n)$ from the vector $\mv{U}_0$ (i.e., a matrix where each row has one 1  and the rest of the values equal to 0) and where $\mv{A}_i$ for $i>0$ is the $n\times n\lceil \alpha \rceil$ matrix that extracts the values $u_i(t_1),\ldots, u_i(t_n)$ from the vector $\mv{U}_i$. We then have that $\mv{u} = [u(t_1), \ldots, u(t_n)]^\top = \mv{A}\bar{\mv{U}}$. Hence, it follows that 
$\mv{u} \sim \mathcal{N}(0,\mv{A}\mv{Q}^{-1}\mv{A^{\top}})$,
and we can sample $\mv{u}$ in $\mathcal{O}(nm\lceil\alpha\rceil^{3})$ cost by first sampling $\bar{\mv{U}}$ and then computing $\mv{u} = \mv{A}\bar{\mv{U}}$.

Partitioning $\bar{\mv{U}} = (\bar{\mv{U}}_A^{\top}\, \bar{\mv{U}}_{B}^{\top})^{\top}$ for some  $A\cup B = \{1,\ldots, N\}$ with $A\cap B = \emptyset$, 
we have that 
$\bar{\mv{U}}_A | \bar{\mv{U}}_B \sim \mathcal{N}(-\mv{Q}_{A,A}^{-1}\mv{Q}_{A,B}\bar{\mv{U}}_B,\mv{Q}_{A,A}^{-1})$. This means that conditional distributions also can be computed efficiently, and in particular, the conditional mean $\mv{\mu}_{A|B} = -\mv{Q}_{A,A}^{-1}\mv{Q}_{A,B}\bar{\mv{U}}_B$ can be computed in $\mathcal{O}(|A|(2\lfloor\alpha\rfloor+1)^2)$ computational cost. 

These costs do not include the cost of building $\mv{Q}$; however, it turns out that one can directly construct an LDL factorization $\mv{Q}_i = \mv{L}_i^\top\mv{D}_i\mv{L}_i$ at a slightly lower computional cost than of computing $\mv{Q}_i$ through the following result.  

\begin{proposition}\label{ra_chol}
Using the same notation as in Proposition~\ref{ra_prec}, the precision matrix of  $\mv{u}_i$ can be constructed as $\mv{Q}_i = \mv{L}_i^\top\mv{D}_i\mv{L}_i$. Here $\mv{D}_i$ is a diagonal matrix with positive diagonal entries and $\mv{L}_i$ is a lower triangular block matrix with ones on the main diagonal. Specifically, $\mv{L}_i$ has the form 
\begin{equation}\label{Lmat}
\mv{L}_i = 
\begin{bmatrix*}[l]
\mv{L}_{1,1}  &              &  	            &       &    \\
\mv{L}_{2,1}  & \mv{L}_{2,2} &                  &       &    \\
              & \mv{L}_{3,2} & \mv{L}_{3,3}     &        &    \\
              &              & \ddots           & \ddots &    \\
              &              &    & \hspace{-0.5cm}\mv{L}_{n,n-1}&  \mv{L}_{n,n} 
  \end{bmatrix*},
\end{equation}
Here all blocks are of the same size as $\mv{u}_{i,j}$ and the matrices $\mv{L}_i$ and $\mv{D}_i$ can be constructed in $\mathcal{O}(n\max(\lfloor\alpha\rfloor,1)^4)$ and $\mathcal{O}(n\lceil\alpha\rceil^4)$ computational cost for $i=0$ and $i>0$, respectively, through the method in Appendix~\ref{ldlsection}.
\end{proposition}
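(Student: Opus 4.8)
The plan is to avoid forming $\mv{Q}_i$ altogether and instead read the factorization directly off the law of the concatenated vector $\mv{u}_i=[\mv{u}_{i,1},\ldots,\mv{u}_{i,n}]$, exploiting the first-order Markov property established just before the statement. First I would factor the joint density as $p(\mv{u}_{i,1})\prod_{j=2}^n p(\mv{u}_{i,j}\mid \mv{u}_{i,j-1})$, which is valid precisely because $\mv{u}_i$ is first-order Markov. Each conditional is Gaussian, and the standard conditional-Gaussian formulas express its mean matrix and covariance purely through the local covariance blocks of Proposition~\ref{ra_prec}: $\mv{F}_j = \mv{r}_i(t_j,t_{j-1})\,\mv{r}_i(t_{j-1},t_{j-1})^{-1}$ and the Schur complement $\mv{\Sigma}_j = \mv{r}_i(t_j,t_j) - \mv{r}_i(t_j,t_{j-1})\,\mv{r}_i(t_{j-1},t_{j-1})^{-1}\,\mv{r}_i(t_{j-1},t_j)$, with the boundary convention $\mv{F}_1=\mv{0}$, $\mv{\Sigma}_1=\mv{r}_i(t_1,t_1)$. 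Since each $\mv{\Sigma}_j$ is a Schur complement of the positive definite covariance of $\mv{u}_i$, it is itself positive definite.

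Second, I would collect the quadratic forms in the exponent. Summing the terms $(\mv{u}_{i,j}-\mv{F}_j\mv{u}_{i,j-1})^\top \mv{\Sigma}_j^{-1}(\mv{u}_{i,j}-\mv{F}_j\mv{u}_{i,j-1})$ and recognizing the linear map $\mv{u}_{i,j}\mapsto \mv{u}_{i,j}-\mv{F}_j\mv{u}_{i,j-1}$ as a unit block-lower-bidiagonal matrix $\tilde{\mv{L}}$ (identity diagonal blocks, subdiagonal blocks $-\mv{F}_j$) gives $\mv{Q}_i = \tilde{\mv{L}}^\top\operatorname{diag}(\mv{\Sigma}_j^{-1})\tilde{\mv{L}}$. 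To reduce this to the scalar-diagonal form claimed in the statement, I would then perform a scalar LDL factorization of each positive definite block, $\mv{\Sigma}_j^{-1}=\mv{G}_j^\top\mv{E}_j\mv{G}_j$ with $\mv{G}_j$ unit lower triangular and $\mv{E}_j$ diagonal with positive entries, and absorb $\operatorname{diag}(\mv{G}_j)$ into $\tilde{\mv{L}}$. Setting $\mv{L}_i=\operatorname{diag}(\mv{G}_j)\,\tilde{\mv{L}}$ and $\mv{D}_i=\operatorname{diag}(\mv{E}_j)$ yields $\mv{Q}_i=\mv{L}_i^\top\mv{D}_i\mv{L}_i$. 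The product preserves the unit-diagonal, block-lower-bidiagonal shape of \eqref{Lmat} (diagonal blocks $\mv{G}_j$, subdiagonal blocks $-\mv{G}_j\mv{F}_j$), and $\mv{D}_i$ is diagonal with positive entries, as required.

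Third, for the cost I would count the per-step work of the explicit recursion. For each index $j$ the construction needs only a fixed number of products and inverses of $p\times p$ matrices, where $p=\max(\lfloor\alpha\rfloor,1)$ for $i=0$ and $p=\lceil\alpha\rceil$ for $i>0$, each costing $\mathcal{O}(p^3)$, together with the assembly of the covariance blocks $\mv{r}_i$. The latter dominates: each block is a $p\times p$ array of mixed derivatives of $\varrho_{m,i,\alpha}$ up to order $2p$, and using the explicit Matérn form from Proposition~\ref{cov_prop} the assembly of one block costs $\mathcal{O}(p^4)$. Because $\mv{u}_i$ is block tridiagonal, only a bounded number of blocks is needed per index $j$, so the total is $\mathcal{O}(np^4)$, giving the two stated bounds and, in particular, a lower constant than recomputing $\mv{Q}_i$ through the larger inversions of \eqref{eq:qsolve2}.

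The main obstacle I anticipate is not the algebra of the factorization but the sharp cost bookkeeping, namely justifying the $\mathcal{O}(p^4)$ per-step figure from the explicit covariance expressions: tracking how many Matérn terms enter $\varrho_{m,i,\alpha}$, the derivative orders required, and the cost of each derivative evaluation. A secondary point to verify carefully is that the local covariance blocks $\mv{r}_i(t_{j-1},t_{j-1})$ entering $\mv{F}_j$ and $\mv{\Sigma}_j$ are genuinely invertible, which I would deduce from the positive definiteness of the full covariance of $\mv{u}_i$, and that the $j=1$ boundary block fits the same pattern so that the recursion is well initialized.
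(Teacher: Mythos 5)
Your proposal is correct and follows essentially the same route as the paper's Appendix~\ref{ldlsection}: both factor the joint density as $\pi(\mv{u}_{i,1})\prod_{j=2}^n\pi(\mv{u}_{i,j}\mid\mv{u}_{i,j-1})$ using the first-order Markov property and then read off the unit-diagonal block-bidiagonal factor and the diagonal of conditional variances from standard conditional-Gaussian formulas; the only difference is that you condition block-by-block and then refine each $\mv{\Sigma}_j^{-1}$ by a scalar LDL, whereas the paper conditions component-by-component in one pass. Your cost bookkeeping attributes the $\mathcal{O}(p^4)$ per block to assembling the covariance blocks rather than, as in the paper, to the $p$ linear solves of sizes $p,\ldots,2p$, but either accounting stays within the claimed $\mathcal{O}(np^4)$ bound, so the conclusion stands.
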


It should be noted that the costs $\mathcal{O}(n\max(\lfloor\alpha\rfloor,1)^4)$ and $\mathcal{O}(n\lceil\alpha\rceil^4)$ can likely be reduced slightly by taking advantage of that several redundant calculations are performed in the construction. 
Forming $\mv{L} = diag(\mv{L}_0,\mv{L}_{1},\ldots,\mv{L}_{m})$ and $\mv{D} = diag(\mv{D}_0,\mv{D}_{1},\ldots,\mv{D}_{m})$,  where the blocks are obtained by using Proposition \ref{ra_chol}, we can now, for example, sample $\bar{\mv{U}}$ as $\bar{\mv{U}} = \mv{L}^{-1}\mv{D}^{-\nicefrac12}\mv{Z}$ at $\mathcal{O}(nm\lceil\alpha\rceil^2)$
computational cost.

Next, consider the situation of a Gaussian process regression where the stochastic process is observed under Gaussian measurement noise. That is, suppose that we have observations $\mv{y}=[y_1,y_2,\dotsc,y_n]$ obtained as
$
y_i|u(\cdot)\sim \mathcal{N}(u(t_i),\sigma_e^2).
$
Then, $\mv{y}|\bar{\mv{U}}\sim \mathcal{N}(\mv{A}\bar{\mv{u}}, \sigma_e^{2}\textbf{I})$ and ${\mv{\bar{\mv{U}}}\sim \mathcal{N}(0,\mv{Q}^{-1})}$, where the matrix $\mv{A}$ and the precision matrix $\mv{Q}$ are same as defined above. The goal is typically to estimate the latent process by computing the posterior mean $\mv{\mu}_{\mv{u}|\mv{y}} = \mathbb{E}(\mv{u}|\mv{y})$ of the vector $\mv{u} = (u(t_1),\ldots, u(t_n))^\top$.

By standard results for latent Gaussian models, we obtain $\bar{\mv{U}}|\mv{y}\sim \mathcal{N}(\mv{\mu}_{\bar{\mv{U}}|\mv{y}}, \mv{Q}^{-1}_{\bar{\mv{U}}|\mv{y}})$, where
$$
\mv{\mu}_{\bar{\mv{U}}|\mv{y}}=\frac{1}{\sigma_e^2} \mv{Q}^{-1}_{\bar{\mv{U}}|\mv{y}}\mv{A}^{\top}\mv{y}\quad\text{and}\quad\mv{Q}_{\bar{\mv{U}}|\mv{y}}=\mv{Q}+\frac{1}{\sigma_e^2}\mv{A}^{\top}\mv{A},
$$
and $\mv{\mu}_{\mv{u}|\mv{y}} = \mv{A}\mv{\mu}_{\bar{\mv{U}}|\mv{y}}$.
The sparsity structure of $\mv{Q}_{\bar{\mv{U}}|\mv{y}}$ is different from  $\mv{Q}$; however, we still have linear cost for computing its Cholesky factor through the following proposition.

\begin{proposition}\label{prop:chol_post}
Computing the Cholesky factor $\mv{R}_{\bar{\mv{U}}|\mv{y}}$ of $\mv{Q}_{\bar{\mv{U}}|\mv{y}}$ requires 
$\mathcal{O}(nm^3\lceil\alpha\rceil^3)$ floating point operations, given that $m,\alpha \ll n$. 
Further, solving $\mv{Y} = \mv{R}^{-1}\mv{X}$ for some vector $\mv{X} \in \mathbb{R}^{N}$ requires $\mathcal{O}(nm^2\lceil\alpha\rceil^2)$ floating point operations.
\end{proposition}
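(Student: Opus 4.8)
The plan is to exhibit an ordering of the latent variables in which $\mv{Q}_{\bar{\mv{U}}|\mv{y}}$ is symmetric positive definite and block tridiagonal, with $n$ diagonal blocks each of size $D = m\lceil\alpha\rceil + \max(\lfloor\alpha\rfloor,1) = \mathcal{O}(m\lceil\alpha\rceil)$, and then invoke the standard complexity of block Cholesky factorisation and triangular solves for such matrices. In the process-major ordering used above (all of $\mv{U}_0$, then all of $\mv{U}_1$, and so on), the prior precision $\mv{Q} = \operatorname{diag}(\mv{Q}_0,\ldots,\mv{Q}_m)$ is block diagonal across the $m+1$ processes, but the correction $\sigma_e^{-2}\mv{A}^\top\mv{A}$ couples them and destroys the banded structure. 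The first step is therefore to introduce the permutation matrix $\mv{P}$ that reorders $\bar{\mv{U}}$ into location-major order, grouping at each location $t_j$ the concatenated vector $(\mv{u}_{0,j},\mv{u}_{1,j},\ldots,\mv{u}_{m,j})$ of length $D$; note that $nD = N$, so this is a genuine reordering of all the latent components.

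Next I would analyse the two summands under $\mv{P}$. Since each $\mv{Q}_i$ is block tridiagonal in the location index by Proposition~\ref{ra_prec}, with nonzero blocks only coupling $t_{j-1},t_j,t_{j+1}$, interleaving the $m+1$ processes location by location produces a matrix $\mv{P}\mv{Q}\mv{P}^\top$ that is again block tridiagonal, now with $n$ diagonal blocks of size $D\times D$. For the correction term, observe that each $\mv{A}_i$ extracts only the zeroth-order values $u_i(t_1),\ldots,u_i(t_n)$, so the Gram matrix $\mv{A}^\top\mv{A}$ has nonzero entries only between components that both correspond to a process value at the \emph{same} location. Consequently $\mv{P}\mv{A}^\top\mv{A}\mv{P}^\top$ is block diagonal with respect to the same $D\times D$ partition, and adding $\sigma_e^{-2}$ times it to $\mv{P}\mv{Q}\mv{P}^\top$ leaves the block tridiagonal structure intact. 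Since $\mv{Q}_{\bar{\mv{U}}|\mv{y}} = \mv{Q} + \sigma_e^{-2}\mv{A}^\top\mv{A}$ is a sum of a positive definite and a positive semidefinite matrix, it is positive definite, so its Cholesky factor exists and, for a block tridiagonal matrix, is block lower bidiagonal with no fill-in outside the band.

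The final step is to apply the well-known cost of block Cholesky. Factorising a block tridiagonal SPD matrix with $n$ diagonal blocks of size $D$ is dominated by $\mathcal{O}(n)$ dense linear-algebra operations on blocks of size $D$, costing $\mathcal{O}(nD^3)$; substituting $D = \mathcal{O}(m\lceil\alpha\rceil)$ yields $\mathcal{O}(nm^3\lceil\alpha\rceil^3)$. Forward and back substitution with the resulting block bidiagonal factor costs $\mathcal{O}(nD^2) = \mathcal{O}(nm^2\lceil\alpha\rceil^2)$, and the permutation $\mv{P}$ contributes only $\mathcal{O}(N)$ overhead. The only genuine obstacle is the structural claim that reordering into location-major form simultaneously preserves the band of $\mv{Q}$ and block-diagonalises $\mv{A}^\top\mv{A}$; once this is established the complexity estimates are the standard ones for banded Cholesky. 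It is worth remarking that the extra factor $m^2$ relative to Proposition~\ref{prop:chol} is precisely the price of enlarging the effective block size from $\lceil\alpha\rceil$ (the per-process bandwidth) to $(m+1)\lceil\alpha\rceil$ through the coupling induced by the observations.
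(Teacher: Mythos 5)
Your proposal is correct and follows essentially the same route as the paper: reorder $\bar{\mv{U}}$ into location-major order, observe that $\mv{Q}_{\bar{\mv{U}}|\mv{y}}$ then has bandwidth (equivalently, block-tridiagonal block size) of order $m\lceil\alpha\rceil$, and apply the standard banded Cholesky and back-substitution costs, with the permutation contributing only $\mathcal{O}(N)$. Your additional justification that $\mv{A}^\top\mv{A}$ becomes block diagonal under this reordering is a detail the paper leaves implicit, but the argument and the resulting bounds coincide.
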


Thus, the cost of obtaining the Cholesky factor $\mv{R}_{\bar{\mv{U}}|\mv{y}}$ and computing the posterior mean $\mu_{\bar{\mv{U}}|\mv{y}}$ is $\mathcal{O}(nm^3\lceil\alpha\rceil^3)$. 
Finally, the log-likelihood of $\mv{y}$ is
\begin{equation*}
2\ell(\mv{y})=\log |\mv{Q}|-2n\log(\sigma_e)-\log|\mv{Q}_{\bar{\mv{U}}|\mv{y}}|-\mu_{\bar{\mv{U}}|\mv{y}}^{\top}\mv{Q}\mu_{\bar{\mv{U}}|\mv{y}} -\frac{1}{\sigma_e^2}\|\mv{y}-\mv{A}\mu_{\bar{\mv{U}}|\mv{y}}\|^2-\log(2\pi).
\end{equation*}
Computing this requires computing the Cholesky factors of $\mv{Q}$ and $\mv{Q}_{\bar{\mv{U}}|\mv{y}}$, after which we obtain the log-determinants and the posterior mean in linear cost. Therefore, the total cost for evaluating the log-likelihood, and thus, performing likelihood-based inference is $\mathcal{O}(nm^3\lceil\alpha\rceil^3)$.
To conclude, all relevant tasks for applying the proposed rational approximation in statistical inference require at most $\mathcal{O}(nm^3\lceil\alpha\rceil^3)$ computational cost, and are thus linear in $n$.

\begin{remark}\label{rationalstate}
Because the rational approximation of the previous section results in a Gaussian process with a spectral density that is a rational function, an alternative to the Markov representations above is to directly apply the state space methods of \citet{karvonen2016approximate} to perform inference and prediction at a linear cost $\mathcal{O}(M^3n)$, where $M$ is the approximation order for the state space method. However, the Markov approximation can directly be used in general Bayesian inference software such as R-INLA \citep{rue2009}, which is not possible for the state space methods. 
\end{remark}

\section{Numerical results}\label{sec:numerical}

In this section, we illustrate the performance and accuracy of the proposed rational approximation method by comparing it with several alternative approaches. Specifically, we compare our method (referred to as \textit{Rational Approximation}) with the state-space method of \cite{karvonen2016approximate}, the nearest-neighbor Gaussian process approximation (referred to as \textit{nnGP}) of \cite{Datta2016}, the random Fourier features method (referred to as \textit{Fourier}) from \cite{rahimi2007random}, a principal component analysis approach (referred to as \textit{PCA}) to serve as a lower bound for low-rank methods, the tapering method (referred to as \textit{Taper}) of \cite{furrer2006covariance}, and the covariance-based rational SPDE approach (referred to as \textit{FEM}) from \cite{bolin2024covariance}.

We compare the accuracy of the proposed method with the alternative methods by carrying out three tasks. First, we measure and compare the accuracy in terms of the accuracy of the covariance function, and then consider the cost of Gaussian process regression and measure the quality in terms of the accuracy of the posterior mean $\mv{\mu}_{\mv{u}|\mv{y}}$ \todo{and posterior standard deviation $\mv{\sigma}_{\mv{u}|\mv{y}}$}. Finally, we compare the accuracy of the process approximation as a whole by computing \todo{Kullback--Leibler (KL) divergences and} coverage probabilities of joint confidence bands in a Gaussian process regression. In all these cases, we consider the accuracy for a fixed computational cost (as explained below) to make the comparison completely fair. Further, all methods were implemented 
in R, using the same methods for sparse matrices, matrix solves, and other tasks. The results were obtained using a MacBook Pro Laptop with an M3 Max processor and 128Gb of memory, without using any parallel computations to make the comparison as fair as possible.

\begin{table}[t]
\centering
\caption{Asymptotic costs for different methods. For nnGP, $m$ is the number of neighbors and for the low rank methods, $m$ is the rank. For the state-space method, the ``preconditioned'' approximation of \cite{karvonen2016approximate} is used in combination with our Markov approach, see Remark~\ref{rationalstate}. 
All costs are in ``Big O'' assuming $n$ is much larger than $\alpha$ and $m$. For FEM, $N$ is the number of mesh nodes.}
\label{tab:proposed_methods}
 \small 
 \setlength{\tabcolsep}{5pt}
\begin{tabular} {llll}
\toprule
Method & Construction & Sampling & Prediction \\
\hline
Proposed & $nm\lceil \alpha \rceil^4$ & $nm\lceil \alpha \rceil^2$   & $nm^3\lceil \alpha \rceil^3$\\
state-space \citep{karvonen2016approximate} & $n(m+\lfloor \alpha \rfloor)\lceil\alpha\rceil^4$ & $n(m+\lfloor \alpha \rfloor)\lceil\alpha\rceil^2$   & $n(m+\lfloor \alpha \rfloor)^3\lceil\alpha\rceil^3$\\
nnGP \citep{Datta2016} & $n m^3$ & $n m$ & $n m^2$ \\
Fourier \citep{rahimi2007random} & $nm$ & $nm$ & $nm^2$\\
PCA \citep{wang2008karhunen} & $n^3$ & $nm$ & $nm^2$\\
Tapering \citep{furrer2006covariance} & $nm$ & $nm$ & $nm^2$ \\ 
FEM \citep{bolin2024covariance} & $Nm\lceil \alpha \rceil + n$ & $Nm\lceil \alpha \rceil^2 + n$ & $Nm^2\lceil \alpha \rceil^2 + n$ \\ 
\bottomrule
\end{tabular}
\label{methods_cost}
\end{table}

\subsection{Setup for the first comparison}\label{samp_cov_error}
We first consider the case of a dataset with 5000 observations, $\mv{y} = [y_1, y_2, \dotsc, y_n]$, \todo{with the observation locations being evenly spaced} over the interval \todo{$I = [0, 50]$}. Each observation is generated as  
$y_i \mid u(\cdot) \sim \mathcal{N}(u(t_i), \sigma_e^2),$
where $t_i$ are the observation locations, and $u$ is a centered Gaussian process with the Matérn covariance function given by \eqref{eq:matern_cov}. We set $\sigma = 1$ (as it is merely a scaling parameter) \todo{and explore two different noise levels: $\sigma_e = 0.1$ and $\sigma_e = \sqrt{0.1}$. Additionally, we} vary the \todo{smoothness parameter} $\nu$ over the interval $(0, 2.5)$. For each value of $\nu$, we choose $\kappa = \sqrt{2\nu}$, ensuring that the practical correlation range, $\rho = \nicefrac{\sqrt{8\nu}}{\kappa}$, remains fixed at 2.

This choice of range, along with the interval length, ensures numerical stability across all methods compared. Specifically, a practical correlation range of 2 was the largest range for which all methods, including the most sensitive (nnGP), remained stable. 
While a practical correlation range of 2 on a domain of size 50 may seem small, stability and accuracy of the predictions depend on the number of observations within the correlation range at any given point, rather than the size of the domain itself. Other ranges, and other numbers of observation and prediction locations, \todo{are explored in the accompanying Shiny app.
For example, one scenario considers $10000$ evenly spaced prediction locations and $5000$ observation locations randomly selected without replacement from the prediction locations.}

The goal is to compute the posterior mean $\mv{\mu}_{\mv{u}|\mv{y}}$ with elements $(\mv{\mu}_{\mv{u}|\mv{y}})_j = \mathbb{E}(u(p_j)|\mv{y})$\todo{, and the posterior standard deviation $\mv{\sigma}_{\mv{u}|\mv{y}}$ with elements $(\mv{\sigma}_{\mv{u}|\mv{y}})_j = \sqrt{\Var(u(p_j)|\mv{y})}$}, where $p_j$ are the prediction locations. To make the comparison simple, we initially choose $p_j = t_j$ evenly spaced in the interval.
Because the locations are evenly spaced, we could also include the Toeplitz method of \cite{supergauss} in the comparison. However, we do not include that here as we only consider generally applicable methods. 

\subsection{Calibration of the computational costs}
The asymptotic costs of the different methods are summarized in Table~\ref{tab:proposed_methods}. However, we calibrate the methods to have the same total runtime for assembling all matrices (the construction cost) and computing the posterior mean (the prediction cost). This ensures fair comparisons based on actual performance rather than theoretical complexity, which can be affected by the constants in the cost expression and the size of the study.
Specifically, for a  given set of parameters $(\kappa, \sigma, \nu)$, and a fixed value of $m$ for the proposed method, we calibrate the values of $m$ for the other methods to ensure that the total computation time is the same. The total prediction times were averaged over 100 samples to obtain the calibration. The final calibration results are shown in Table~\ref{tab:calcov}. We now provide some details regarding this calibration procedure.

\begin{table}[t]
\centering
\caption{The choice of $m$ for the different methods that result in equal computational cost as for the proposed method.  The cost for the Fourier method is the same as for PCA, SS denotes the state-space method and Tap the tapering method.}
\label{tab:calcov}
 \small 
 \setlength{\tabcolsep}{5pt}
\begin{tabular} {lllllllllllll}
\toprule
& \multicolumn{4}{c}{$\nu<0.5$} & \multicolumn{4}{c}{$0.5<\nu<1.5$} & \multicolumn{4}{c}{$1.5 < \nu<2.5$}\\
 \cmidrule(r){2-5}  \cmidrule(r){6-9}  \cmidrule(r){10-13} 
m & nnGP & PCA & SS & Tap & nnGP & PCA & SS & Tap & nnGP & PCA & SS & Tap \\
\hline
2 & 1 & 308 & 2 & 1 & 2 & 473 & 1 & 2 & 30 & 810 & 1 & 210 \\
3 & 1 & 355 & 3 & 1 & 13 & 561 & 2 & 3 & 37 & 945 & 1 & 342\\
4 & 1 & 406 & 4 & 1 & 21 & 651 & 3 & 62 & 45 & 1082 & 2 & 376\\
5 & 1 & 433 & 5 & 1 & 27 & 708 & 4 & 124 & 51 & 1205 & 3 & 405\\
6 & 1 & 478 & 6 & 1 & 31 & 776 & 5 & 166 & 54 & 1325 & 4 & 501\\
\bottomrule
\end{tabular}
\end{table}

\begin{table}[t]
\centering
\caption{Number of mesh nodes $N$ for the FEM method for different values of $m$. }
\bigskip
\label{tab:fem}
 \small 
 \setlength{\tabcolsep}{5pt}
\begin{tabular} {llllll}
\toprule
m & 2 & 3 & 4 & 5 & 6\\
\hline
$\nu<0.5$ &  10495& 15494& 20493& 20493& 20493\\
$0.5<\nu<1.5$ &  25492 & 35490 &40489& 45488& 50487\\
$1.5 < \nu<2.5$ & 25492 & 20493& 15494& 15494& 10495\\
\bottomrule
\end{tabular}
\end{table}

Some of the methods have costs which depend on the smoothness parameter $\nu$. The calibration is therefore done separately for the ranges $0 < \nu < 0.5$, $0.5 < \nu < 1.5$, and $1.5 < \nu < 2.5$. 
%
%
For the taper method, the taper function was chosen as in \cite{furrer2006covariance} depending on the value of $\nu$, and the taper range was chosen so that each observation, on average, had $m$ neighbors within the range, and the value of $m$ was then chosen to ensure that the total computational cost matches that of the rational approximation.
For $\nu < 0.5$, the calibration was not possible for nnGP and taper, because the rational approximation remained faster even with $m = 1$. For these cases, we set $m = 1$. 

Given the number of basis functions, the PCA and Fourier methods have the same computational cost for prediction. Thus, the value of $m$ for the Fourier method (the number of basis functions) was set to match the value obtained for the PCA method. The PCA method was calibrated disregarding the construction cost, which is equivalent to assuming that we know the eigenvectors of the covariance matrix. This is not realistic in practice, but makes the method act as a theoretical lower bound for any low-rank method.

As described in Remark~\ref{rationalstate}, the state-space method provides an alternative Markov representation for which we could use the same computational methods as for the proposed method. Its value of $m$ was therefore chosen according to Table~\ref{tab:proposed_methods} as $m - \lfloor \alpha \rfloor$.  

To minimize boundary effects of the FEM method, we extended the original domain $[0, 50]$ to $[-4\rho, 50 + 4\rho]$, where $\rho$ is the practical correlation range. This extension ensures accurate approximations of the Matérn covariance at the target locations. \todo{We refer the reader to \citet[Theorem 3.2]{boundary_effect_WM} and \citet[Proposition 2]{bolin2024covariance}  for a theoretical justification of this commonly used procedure}. Because the FEM method uses the same type of rational approximation as the propose method, we fixed the value of $m$ for the FEM method to be equal to $m$ for the proposed method. The calibration was then instead performed on the finite element mesh. Specifically, a mesh with $N = kn + 500 - (k+3)$ locations in the extended domain used, where $500$ locations were in the extensions and $kn$ locations in the interior $[0,50]$, and the $-(k+3)$ term appears to ensure that the regular mesh contains the observation locations. These $kn$ locations were chosen equally spaced to include the observation locations and $k\in\mathbb{N}$ was calibrated to make the total computational cost match that of the proposed method for $\nu < 1.5$, and for  $1.5 < \nu < 2.5$ it was chosen as the largest values that yielded a stable prediction, as the value which matched the computational cost yielded unstable predictions. The resulting values of $N$ are shown in Table~\ref{tab:fem}.

\subsection{Covariance errors}\label{sec:covariance_errors}
We measure the quality of the approximation by calculating the $L_2$ error and the $L_{\infty}$ error of the respective covariance function approximation, computed as 
\begin{equation*}
	\|r_\alpha-\hat{r}_\alpha\|_{L_2(I\times I)}^2\approx\frac{1}{n^2}\sum_{i,j=1}^{n}(r(t_i,t_j)-\hat{r}(t_i,t_j))^2, \,\,\,
 \|r-\hat{r}\|_{C(I\times I)}\approx\max_{i,j}|r(t_i,t_j)-\hat{r}(t_i,t_j)|,
\end{equation*}
where $r_\alpha$ denotes the Matérn covariance given in \todo{\eqref{eq:matern_cov_2par}, $I = [0,50]$ is the domain we are considering,} and $\hat{r}_\alpha$ represents the approximation obtained using the methods under consideration.
 \begin{figure}[t]
  \begin{center}
	\includegraphics[height=0.57\linewidth]{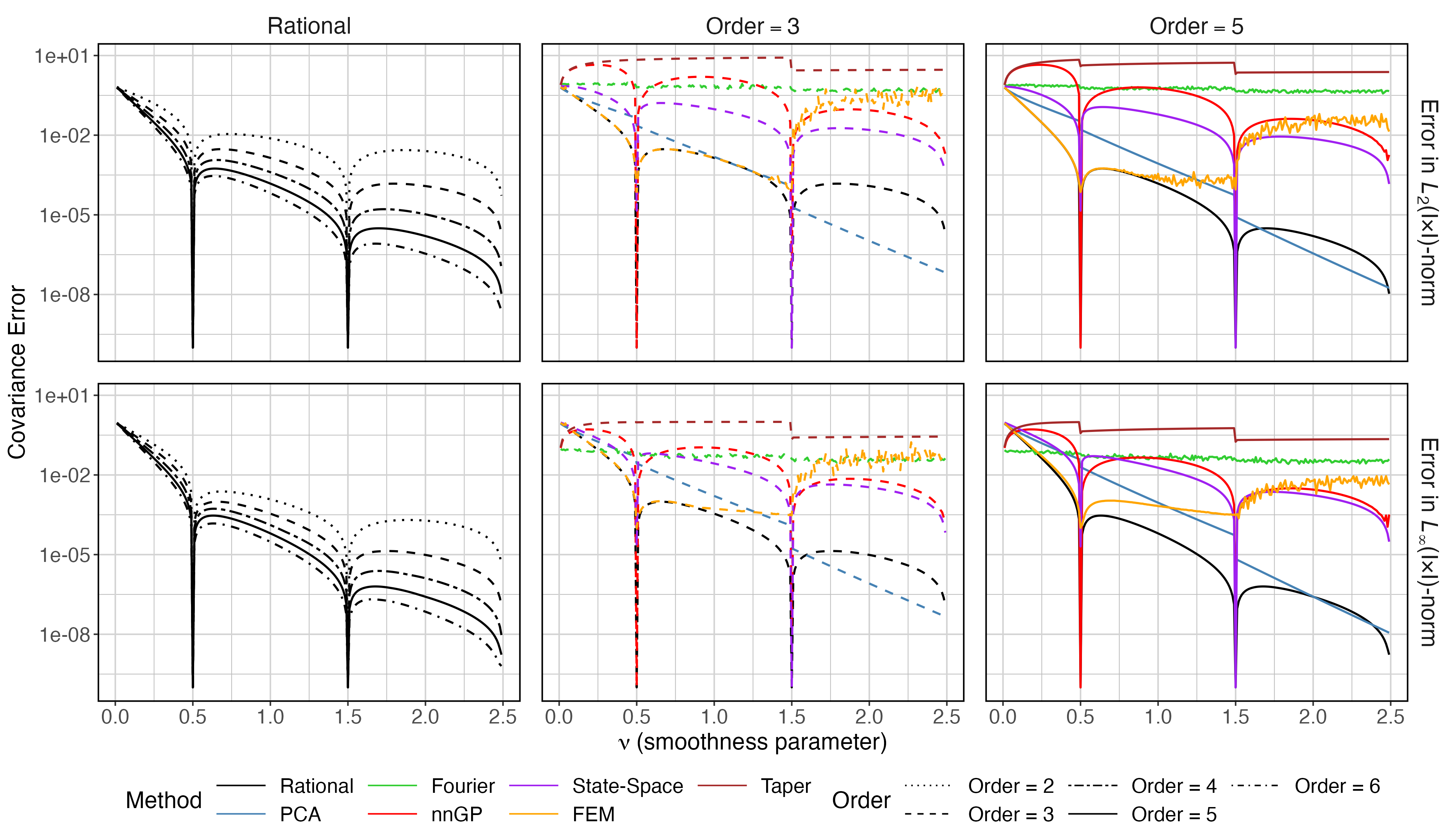}
  \end{center}
  \caption{Covariance errors for different values of $\nu$ and different orders of approximation.}
  \label{fig:covarianceerror}
\end{figure}
\todo{Figure~\ref{fig:covarianceerror} illustrates the resulting $L_2$ and supremum norm errors in the covariance approximation for the different methods when $n = 5000$.
The left panel illustrates the error for our method with varying choices of $m$. Notably, the covariance error decreases rapidly to zero as $m$ increases, with the rate of decrease becoming faster for larger values of $\nu$. Moreover, one can observe that, especially for $\nu$ away from zero, each increase in $m$ leads to improvements on the scale of orders of magnitude.}
In the remaining two panels, we show the error for $m=3$ and $m=5$ and compare these errors to the error from the competing methods, calibrated to have the same cost of prediction as described above. We can note that the competing methods are less accurate and in fact, one can observe that the nnGP method becomes slightly unstable for large values of smoothness parameter $\nu$, whereas the FEM method becomes very unstable due to the calibrated mesh being very fine. It is also noteworthy that the mesh is fine enough that when the method is stable (which mean $\nu < 0.9$ for $m=3$ and $\nu < 0.5$ for $m=5$), we can only see the rational approximation error, as both FEM and rational methods overlap. 
This shows that the FEM method is an excellent choice when stable, but that the rational is better because it is more stable and does not require choosing a mesh or a boundary extension. The PCA method is more accurate for large values of $\nu$, but one should recall that it is a theoretical lower bound for low rank methods, and not a practically competitive method, as it has an $\mathcal{O}(n^3)$ construction cost unless the eigenvectors are known explicitly. Any practically useful low rank method, such as the process convolution approach \citep{higdon2002space} or fixed rank kriging \citep{Cressie2008}, will have larger errors, as can clearly be seen when considering the errors of the random Fourier features method. \todo{Additionally, it is important to note that for the values $\nu = 0.5, 1.5, 2.5$ within the considered interval $(0,3)$, the proposed method is exact. Further, Figure~\ref{fig:covarianceerror} highlights an important observation regarding the theoretical bounds and numerical results: The numerical errors behave  similar to the (approximate) theoretical bounds in Figure \ref{fig:theoretical_error}, where the error is monotonic in $m$ but not monotonic in $\{\alpha\}$.
}

 \begin{figure}[t]
  \begin{center}
	\includegraphics[height=0.57\linewidth]{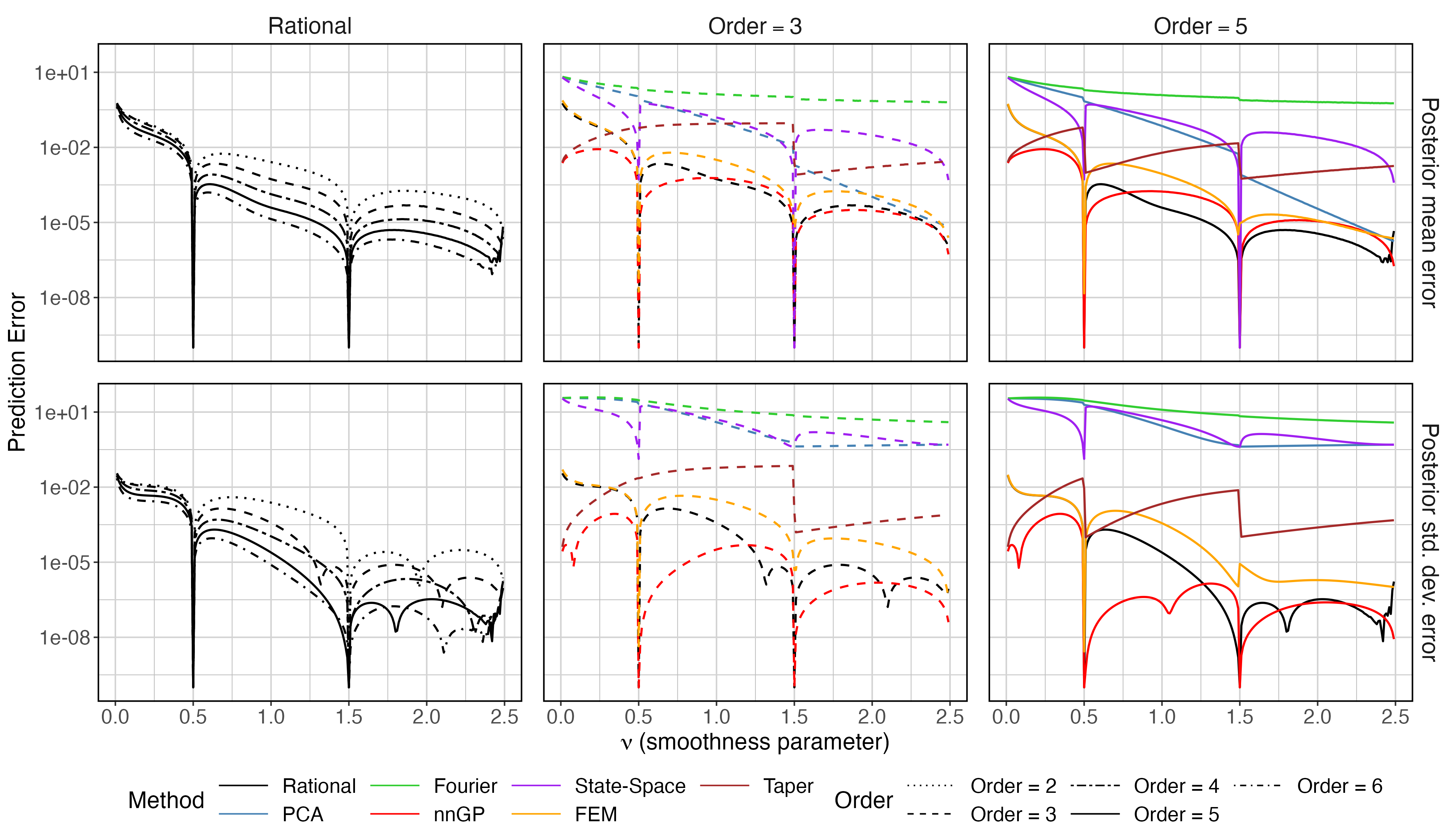}
  \end{center}
  \caption{\todo{Prediction errors, measured as mean squared errors of the posterior means and posterior standard deviations, evaluated on the interval $I = [0, 50]$ for $\sigma_e = 0.1$, $\rho = 2$, various values of $\nu$, and different orders of approximation.}}
  \label{fig:prediction}
\end{figure}

\subsection{Prediction errors}\label{sec:prediction_errors}
To compute the prediction error, we first compute the true posterior mean $\mv{\mu}_{\mv{u}|\mv{y}}$, the posterior mean $\hat{\mv{\mu}}_{\mv{u}|\mv{y}}$ under each approximate model and the corresponding $L_2$ errors \todo{$\|\mv{\mu}_{\mv{u}|\mv{y}} -\hat{\mv{\mu}}_{\mv{u}|\mv{y}}\|_{L_2}$, where 
$\|\mv{v}\|_{L_2}^2 =\frac{1}{n}\sum_{i=1}^{n}\mv{v}_i^2$ for $\mv{v}\in\mathbb{R}^n$.}
\todo{Similarly, we also compute the true posterior standard deviation $\mv{\sigma}_{\mv{u}|\mv{y}}$ and the posterior standard deviation $\hat{\mv{\sigma}}_{\mv{u}|\mv{y}}$ under each approximate model and the corresponding $L_2$ error
$\|\mv{\sigma}_{\mv{u}|\mv{y}} -\hat{\mv{\sigma}}_{\mv{u}|\mv{y}}\|_{L_2} $.}

\begin{figure}[t]
  \begin{center}
	\includegraphics[height=0.57\linewidth]{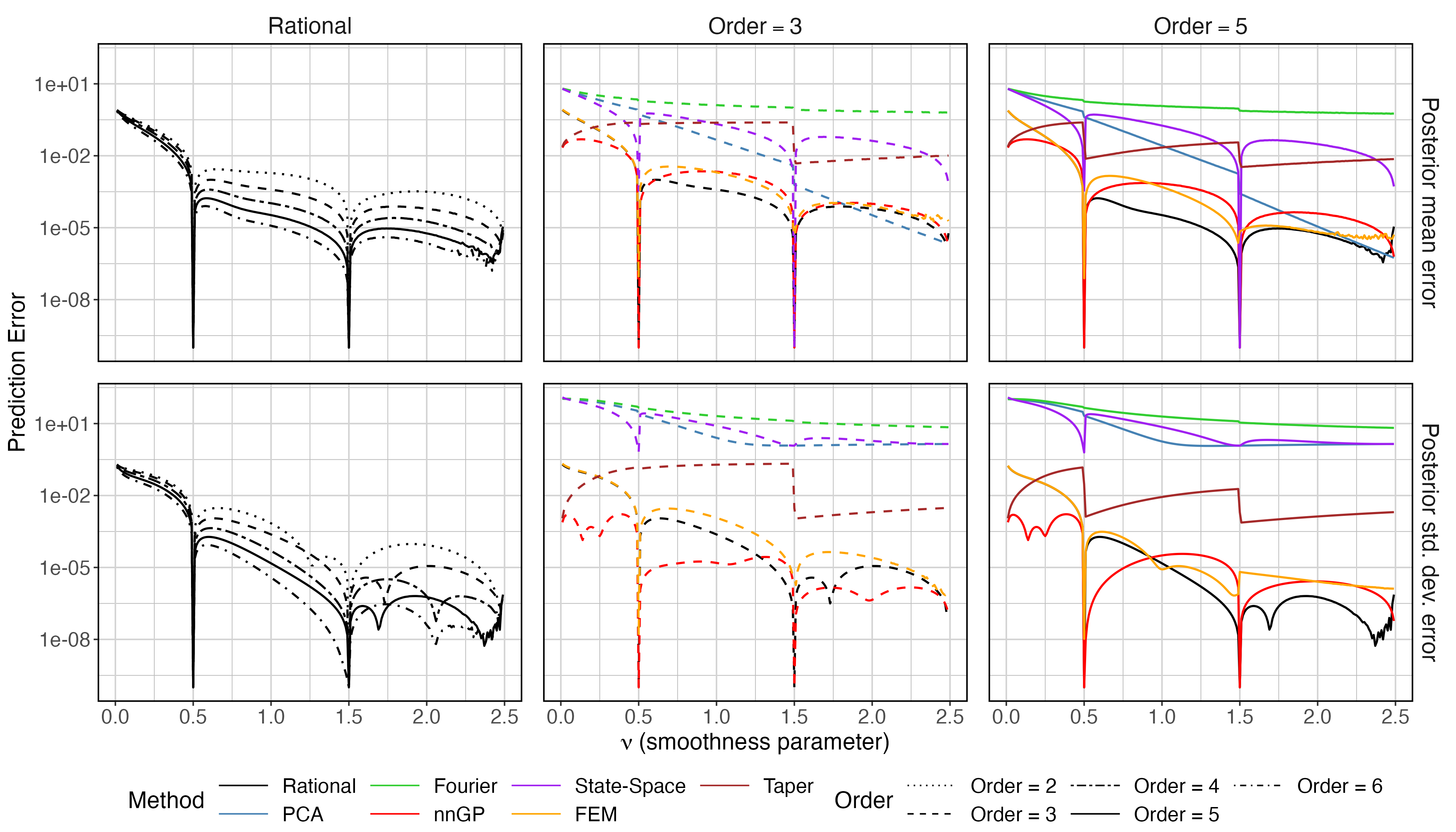}
  \end{center}
  \caption{\todo{Prediction errors, measured as mean squared errors of the posterior means and posterior standard deviations, evaluated on the interval $I = [0, 50]$ for $\sigma_e = \sqrt{0.1}$, $\rho = 2$, various values of $\nu$, and different orders of approximation.}}
  \label{fig:prediction_2}
\end{figure}

\todo{The prediction errors under the noise levels $\sigma_e = 0.1$ and $\sigma_e = \sqrt{0.1}$ are shown in Figure~\ref{fig:prediction} and Figure~\ref{fig:prediction_2}, respectively. The left columns of these figures show that the $L_2$ error of both the posterior means and posterior standard deviations decrease consistently and rapidly to zero for the rational approximation as $m$ increases.} 
\todo{The other two columns of the figures compare the errors of the different methods when calibrated to have the same cost as the rational approximation method with $m=3$ and $m=5$. Yhese results show that state-space method and the Fourier method are not competitive. The PCA method performs poorly for the posterior standard deviations, and also for the posterior mean unless $\nu$ is large. Additionally, using PCA at this cost requires prior knowledge of the eigenvectors, so it is typically not a feasible option. Further, the tapering method also performs poorly except for very small values of $\nu$ (essentially $\nu < 0.25$ where it is not calibrated). This leaves three methods which are competitive: the rational approximation, the FEM method and nnGP.} 

\todo{The accuracy of the FEM method is essentially equal to that of the rational approximation for $\nu \leq 0.5$, whereas it is slightly worse for $\nu > 0.5$. The nnGP method has the highest accuracy for $\nu < 1$ and $\sigma_{e} = 0.1$, but one should recall that the method is not calibrated for  $\nu<0.5$ so the comparison is not completely fair. For $\nu > 1$ and $\sigma_{e} = 0.1$, the two method have similar performance but the rational approximation is slightly better for the posterior mean. 
For $\sigma_e = \sqrt{0.1}$, the rational approximation outperforms nnGP and all other methods if $\nu > 1$, whereas for $\nu \in (0.5,1)$, the rational approximation has the best accuracy for the posterior mean but nnGP has the best accuracy for the standard deviation.}

Further comparisons for other choices of the number of prediction locations $n$, the number of  observation locations $n_{obs}$, and for different practical correlation ranges can be found in the accompanying Shiny application. The general conclusions for these choices align closely with those presented above. The main difference is that, for prediction tasks with a very small practical correlation range and an equal number of prediction and observation locations, nnGP tends to perform the best. This scenario is particularly favorable for nnGP, as all locations coincide with support points. Additionally, when $n > n_{obs}$, the rational approximation outperforms all methods, except for $\nu > 1.5$, where nnGP achieves the best performance in prediction tasks.

\subsection{Process approximation}
\todo{As observed in Section~\ref{sec:covariance_errors}, no alternative method comes close to the proposed method in terms of the stability and accuracy of the covariance approximation. However, in terms of prediction accuracy, the nnGP and FEM methods exhibit similar performance in certain scenarios. Specifically, for the FEM method, similar accuracies are observed for small values of $\nu$, such as $\nu < 1$ for order 3 and $\nu < 0.5$ for order 5. Also, for certain cases, the nnGP method  outperformed the proposed method in terms of prediction accuracy.}
However, one important difference of the nnGP method is that the approximation is strongly dependent on the support points, which typically is chosen as the observation locations in applications \citep{Datta2016}. In this section, we illustrate that this results in a poor approximation of the stochastic process, even though the approximation of the finite dimensional distribution at the support points is accurate. \todo{To this end, we consider the task of computing a joint confidence band for the latent process in two different scenarios.}
\todo{In both scenarios, we assume that we have data $\mathbf{y}=[y_1,y_2,\dotsc,y_n]$ generated as
$
y_i|u(\cdot)\sim \mathcal{N}(u(t_i),\sigma_e^2),
$
where $u$ is a centered Gaussian process with the Matérn covariance \eqref{eq:matern_cov}. As before, we consider the two noise levels $\sigma_e = 0.1$ and $\sigma_e = \sqrt{0.1}$. Based on these observations, we perform prediction at locations $p_1,\ldots,p_n$, and use the \texttt{excursions} package \citep{excursionspackage} to compute the upper and lower bounds, $c_l(s)$ and $c_u(s)$, of a joint confidence band so that 
$P(c_l(s) < u(s) < c_u(s) | \mathbf{y}, s \in \{p_1,\ldots, p_n\}) = 0.9.$}

\todo{In Scenario 1, we consider a forecasting setting where we construct a regular mesh of $1501$ points in the interval $[0,15]$. The first $1001$ points, evenly spaced in $[0,10]$, serve as observation locations. For prediction, we use 
all observation locations plus the next ${n \in \{0,1,\ldots,10,20,30,40,50,75,100,125,\ldots,500\}}$  consecutive points from the mesh as prediction locations.}
\todo{In Scenario 2, we instead use $250$ observation locations randomly sampled uniformly in the interval $[0,10]$, with a minimum spacing of $10^{-3}$ between locations ensured through resampling if needed, to ensure stability for nnGP. For for an increasing sequence of values $n$ between $0$ and $3000$, we consider $n$ evenly spaced prediction locations in the interval. The combined set of observation and prediction locations is processed to maintain the minimum spacing requirement while preserving all observation locations. To account for the randomness in the selection of observation locations, we repeat this scenario $10$ times and average the results.}

\todo{For each scenario, we compute the approximate coverage probabilities for nnGP, FEM, and the proposed method:
$
\tilde{p} = \tilde{P}(c_l(s) < u(s) < c_u(s) | \mathbf{y}, s \in \{p_1,\ldots, p_n\}),
$
where $\tilde{P}(\cdot|\cdot)$ denotes the posterior probability under the approximate model. If the process approximation is accurate, we expect that $\tilde{p} \approx 0.9$. Additionally, we evaluate the accuracy using the Kullback-Leibler (KL) divergence between the approximate and true posterior distributions.}

\begin{figure}[t]
  \begin{center}
	\includegraphics[width=\linewidth]{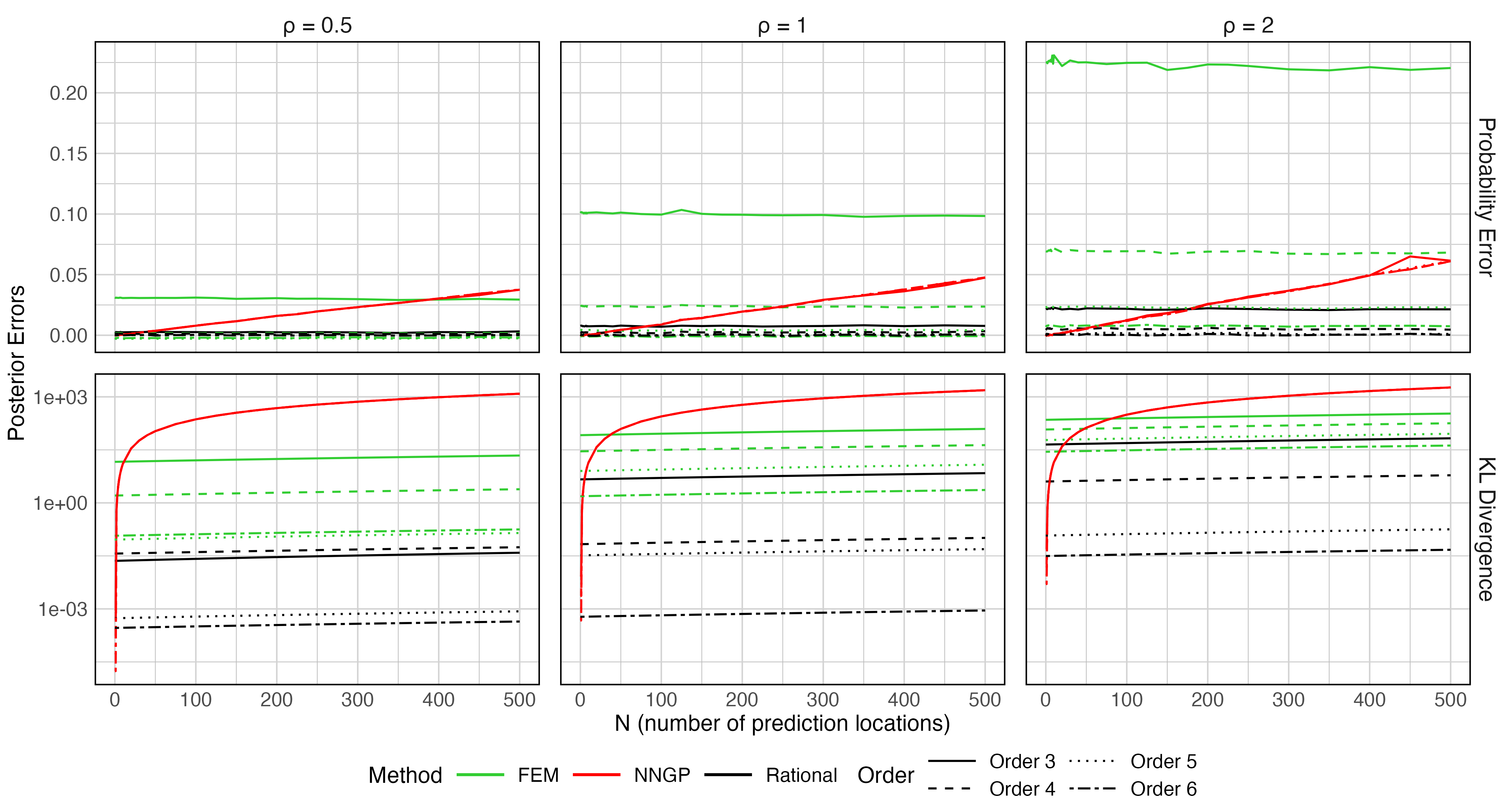}
  \end{center}
   \caption{\todo{Posterior probability errors and KL divergence under Scenario 1 with $\sigma_e = 0.1$ for  $\nu=1$, orders $m$ from $3$ to $6$, and three different practical correlation ranges $\rho$.}}
  \label{fig:post_prob_reg}
\end{figure}

\todo{We choose $\nu = 1$ and three values of the practical correlation range: $\rho = 0.5, 1,$ and $2$. For each scenario and each value of $n$, the order $m$ of the nnGP approximation is chosen so that the computational cost for evaluating the posterior mean at the prediction locations is the same as for the proposed method, and the support points for the nnGP approximation is kept fixed at the observation locations. For $n < 1000$, we use the calibration cost from $n = 1000$, as the observed cost in smaller cases primarily reflects computational overhead.}

\todo{The error $0.9 - \tilde{p}$ and KL divergences for the rational, nnGP and FEM methods are shown as functions of $n$ for Scenario 1 and noise level $\sigma_e = 0.1$ in Figure~\ref{fig:post_prob_reg} and for Scenario~2 and noise level $\sigma_e = \sqrt{0.1}$ in Figure~\ref{fig:post_prob2}. The results for Scenario 1 with $\sigma_e = \sqrt{0.1}$ and Scenario~2 with $\sigma_e = 0.1$ are very similar, so we do not include them here. However, the results for those cases can be seen in the accompanying Shiny app.}

\todo{As expected, the error of the nnGP method is small for $n=0$ but increases quickly with $n$ and has poor performance both in terms of probability approximation and KL divergence even for a very small number of prediction locations. The results for the proposed approximation are much more stable, which supports the claim that the proposed approximation is a better process approximation than the nnGP method. Further, the approximation is also better than the corresponding (calibrated) ones from the FEM method.}

\begin{figure}[t]
  \begin{center}
	\includegraphics[width=\linewidth]{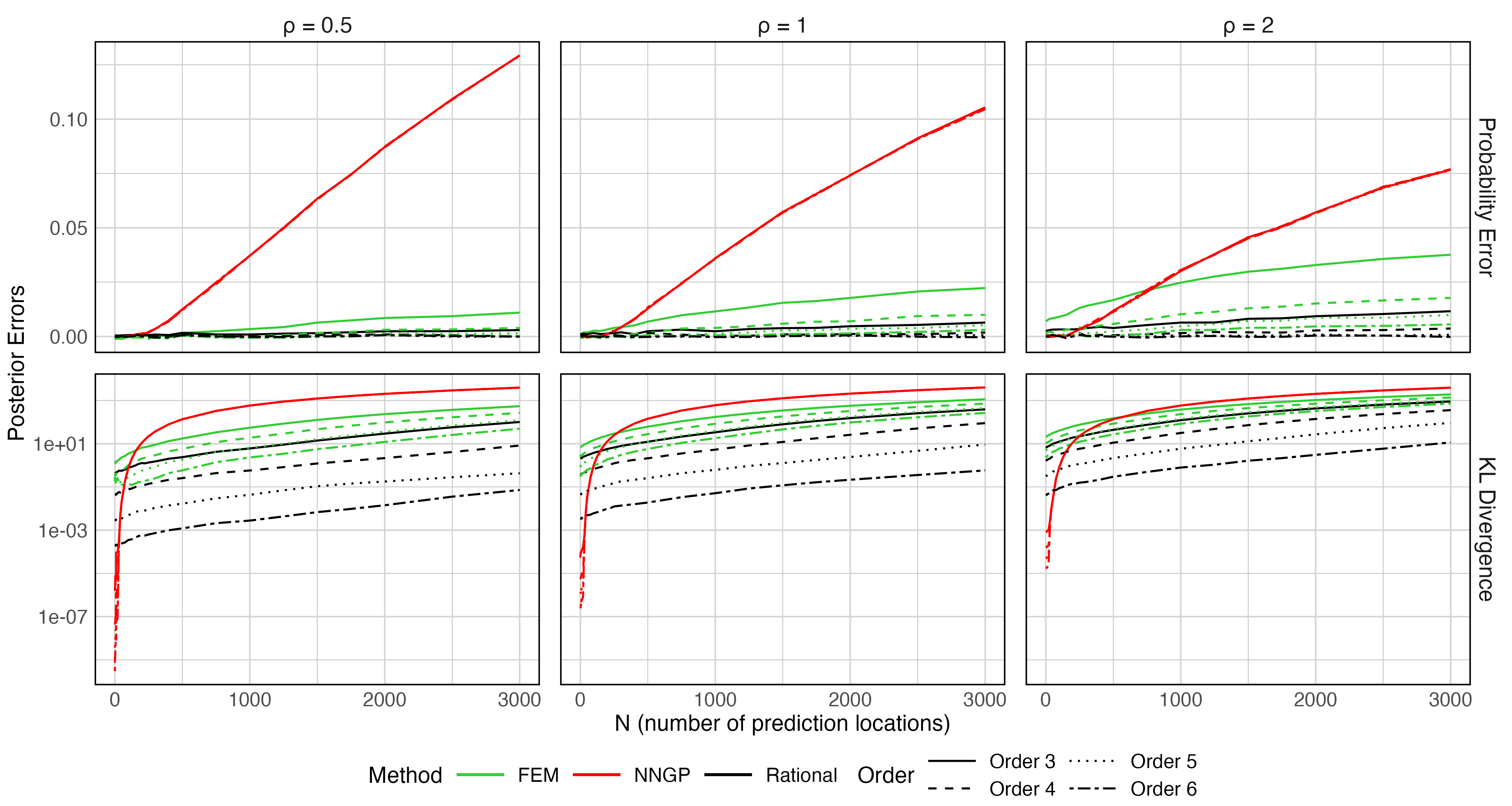}
  \end{center}
   \caption{\todo{Posterior probability errors and KL divergence under Scenario 2 with $\sigma_e = \sqrt{0.1}$ for $\nu=1$, orders $m$ from $3$ to $6$, and three different practical correlation ranges $\rho$.}}
  \label{fig:post_prob2}
\end{figure}

\todo{As a final study, we consider Scenario 1, in which we do forecasting, and compute the KL divergence between the true distribution of the field $(u(p_1),\ldots, u(p_n))$, where $u(\cdot)$ is a Gaussian process with the Matérn covariance function \ref{eq:matern_cov}, and their approximations by the rational, nnGP and FEM methods. We follow the approach of Scenario 1 and consider the first $1001$ points as support points for nnGP. We then compute the KL divergence between the true distribution of the field and the approximations by the different methods. The results are shown in Figure~\ref{fig:kl_div}, and we observe a similar situation as in the posterior distributions study.
The proposed method outperforms the FEM method consistently, and the nnGP method has good performance if all locations are support points. However, the nnGP error increases rapidly as the number of non-support points increases, and as soon as we have a few non-support points, the proposed method also outperforms nnGP.}

\todo{
  \begin{remark}
    We did not include Scenario 2 in the prior distribution study because the nnGP method is not stable for this scenario. The reason for this instability is that the nnGP method is highly dependent on the ordering of the support points and prediction points as shown in \cite[Section 4.2.1]{schafer2021sparse}. For the posterior calculations we were able to overcome this instability by using a Markov property for performing prediction for nnGP \citep{Datta2016}, which stabilized the posterior distributions. However, as seen in Figures \ref{fig:post_prob_reg} through \ref{fig:kl_div}, it is  unlikely that the conclusions would be different for  prior distributions under Scenario 2.
  \end{remark}
}

\begin{figure}[t]
  \begin{center}
	\includegraphics[height=0.35\linewidth]{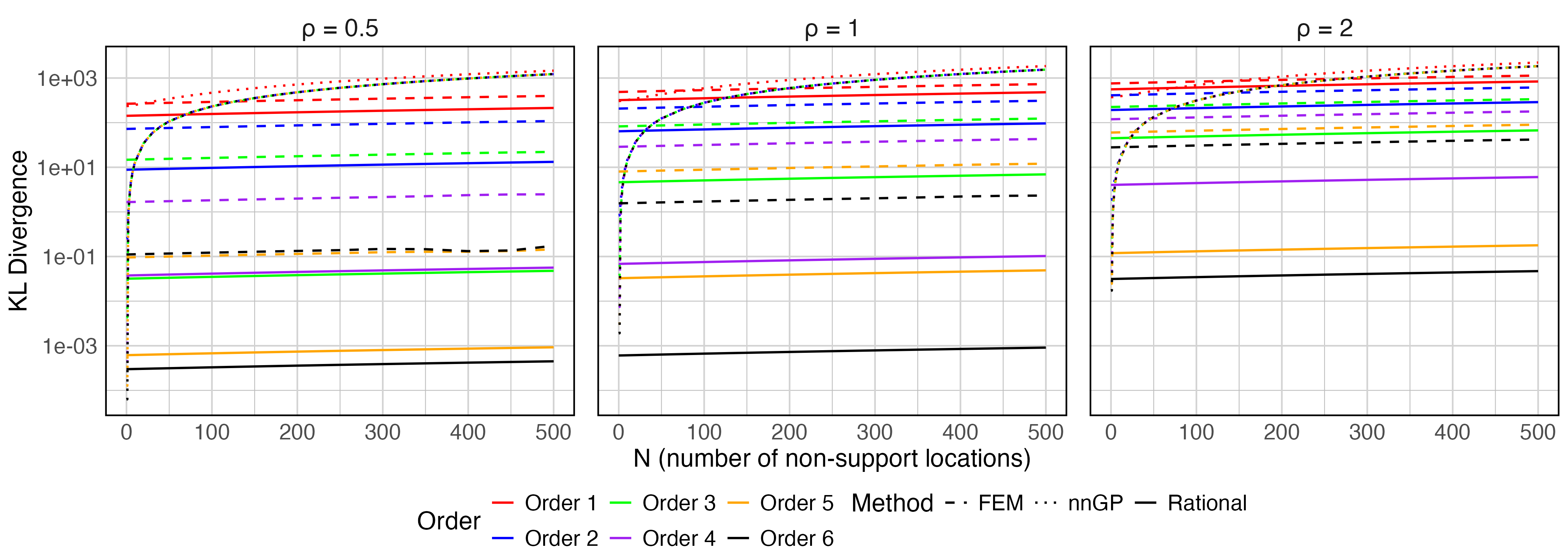}
  \end{center}
   \caption{\todo{KL divergences (in log scale) under Scenario 1 for $N$ non-support locations, $\nu=1$, orders $m$ from $1$ to $6$, and three different practical correlation ranges $\rho$.}}
  \label{fig:kl_div}
\end{figure}

\section{Discussion}\label{sec:discussion}  

We have introduced a broadly applicable method for working with Gaussian processes (GPs) with Matérn covariance functions on bounded intervals, facilitating statistical inference at linear computational cost. Unlike existing approaches, which often lack theoretical guarantees for accuracy, our method achieves exponentially fast convergence of the covariance approximation. This guarantees both computational efficiency and high accuracy. Furthermore, the method yields a Markov representation of the approximated process, which simplifies its integration into general-purpose software for statistical inference. 

Our experimental results demonstrate that the proposed method outperforms state-of-the-art alternatives, offering significant improvements in accuracy. The method has been implemented in the \texttt{rSPDE} package, which is compatible with popular tools such as \texttt{R-INLA} \citep{lindgren2015bayesian} and \texttt{inlabru} \citep{inlabru}, facilitating seamless integration into Bayesian hierarchical models. 

The two most competitive methods besides the proposed method were nnGP and FEM. The nnGP model provides accurate predictions, particularly for lower values of the smoothness parameter $\nu$, as illustrated in Figure~\ref{fig:prediction}. However, it falls short in terms of process approximation. Specifically, the covariance errors associated with nnGP are significantly larger than those from the rational approximation method, as shown in Figure~\ref{fig:covarianceerror}. Additionally, the probability approximations deteriorate markedly when observations are not contained in the support points, evidenced by \todo{Figures~\ref{fig:post_prob_reg} through \ref{fig:kl_div}}. This limitation suggests that while nnGP may provide good results in contexts focused on prediction, such as those commonly found in machine learning, it is less competitive in 
broader statistical applications which require accurate process approximations. The main advantage of the proposed method compared to the FEM method is a generally higher accuracy and stability \todo{(in particular for large values of $\nu$)}, and the fact that no mesh needs to be chosen.

\todo{To ensure a fair comparison, we assumed all parameters to be known throughout the simulation studies, providing a clear baseline using the true covariance function. We expect similar results also in cases with unknown parameters, such as when performing likelihood-based statistical inference, as iterative numerical procedures rely on intermediate approximations of the true covariance. However, this remains to be investigated.}

An exciting direction for future work involves extending our method to more general domains. As mentioned in Section~\ref{sec:rationalapprox}, the method can also be derived through a rational approximation of the covariance operator of the process. More precisely, consider a Gaussian process $u$ on a domain $\mathcal{D}$ defined as the solution to  
\begin{equation}\label{eq:spde_alt}
    L^{\beta} (\tau u) = \mathcal{W}, \quad \text{on } \mathcal{D},
\end{equation}  
where $\mathcal{W}$ is Gaussian white noise on $\mathcal{D}$, $\beta > 0$ is a real number, and $L$ is a densely defined self-adjoint operator on $L_2(\mathcal{D})$, so that the covariance operator of $u$ is given by $L^{-2\beta}$. For example, if we have $L = (\kappa^2 - \Delta)$ on $\mathbb{R}$, we obtain a Gaussian process with a stationary Mat\'ern covariance function \citep{whittle63}. Following the approach in \cite{bolin2024covariance}, one can approximate the covariance operator as  
\begin{equation}\label{eq:rat_approx_cov}
    L^{-2\beta} \approx L^{-\lfloor 2\beta\rfloor} \left( \sum_{i=1}^m c_i (L - p_i I)^{-1} + k I \right),
\end{equation}  
where $\{c_i\}_{i=1}^m$, $\{p_i\}_{i=1}^m$ and $k$ are real numbers, satisfying $p_i \leq 0$, $c_i > 0$, and $k > 0$ 
by Proposition~\ref{prp:partial_fractions},
and $I: L_2(\mathcal{D}) \to L_2(\mathcal{D})$ is the identity operator. If suitable conditions on $L$ hold, this approximation expresses $L^{-2\beta}$ as a linear combination of covariance operators. We will refer to this approach as the covariance-based approach.

By computing the finite-dimensional distributions corresponding to the operators $L^{-\lfloor 2\beta\rfloor}$ and $L^{-\lfloor 2\beta\rfloor}(L - p_i I)^{-1}$ for $i = 1, \dots, m$, we can approximate the finite-dimensional distributions of $L^{-2\beta}$ and thus the solution to \eqref{eq:spde_alt}. Assume further that these covariance operators are induced by covariance functions. Let $\varrho_\beta(\cdot, \cdot)$ denote the covariance function associated with $L^{-2\beta}$, $\widehat{\varrho}_{\beta}(\cdot, \cdot)$ the covariance function corresponding to $k L^{-\lfloor 2\beta\rfloor}$, and $\widehat{\varrho}_{\beta, i}(\cdot, \cdot)$ the covariance function associated with $c_i L^{-\lfloor 2\beta\rfloor}(L - p_i I)^{-1}$ for $i = 1, \dots, m$. Then, \eqref{eq:rat_approx_cov} implies that
\begin{equation}\label{eq:cov_function_rat_approx}
    \varrho_\beta(x, y) \approx \widehat{\varrho}_{\beta}(x, y) + \sum_{i=1}^m \widehat{\varrho}_{\beta, i}(x, y), \quad x, y \in \mathcal{D},
\end{equation}
which corresponds exactly to the expression in Proposition~\ref{cov_prop}, demonstrating the equivalence between the spectral and covariance-based approaches.

It is important to note, however, that both approaches—the covariance-based and the spectral approach—yield approximations that do not converge to the true covariance operator when $\mathcal{D} = \mathbb{R}^d$ for $d \geq 1$. Nonetheless, one can instead consider compact domains, $\mathcal{D}$, where convergence of the approximation can generally be established. 

This formulation offers a pathway for extending our method to more complex (compact) domains. For instance, starting with $L = (\kappa^2 - \Delta)$ on a circle, one can define rational approximations for periodic Matérn fields. More generally, the approach can be adapted to network-like domains, following recent developments in Gaussian Whittle–Matérn fields on metric graphs \citep{bolin2023statistical, bolin2024gaussian}. These extensions, including their theoretical and computational aspects, will be explored in future work. Finally, if $\mathcal{D}$ is a $d$-dimensional manifold (which also includes compact subsets of $\mathbb{R}^d$), where $d\geq 2$, this approach is not expected to provide sparse approximations unless $\mathcal{D}$ is a Cartesian product of intervals, and $L$ is a separable differential operator, that is, $L = \sum_{i=1}^d L_i$, where $L_i$ is a differential operator acting only on $x_i$, where $x=(x_1,\ldots,x_d)\in \mathcal{D}$. This implies that, while the approximation may still be applicable for manifolds of dimension greater than one, it is generally not practical unless the model is separable. Such separable models, which have been explored in previous work (see, e.g., \cite{chen2022kernel}), are rarely suitable for modeling spatial data. 

\todo{Another advantage with the covariance-based formulation is that is that it can be combined with FEM approximations, and because by taking advantage of the fact that \eqref{eq:cov_function_rat_approx} is stationary, one could use this to avoid having to rely on boundary extensions to remove boundary effects for the FEM approach. As this is a common problem with the FEM approach, which increases the computational cost, this is an interesting topic to investigate in future work. }

\appendix

\section{LDL Algorithm and proof of Proposition~\ref{ra_chol}}\label{ldlsection}
In this section, we explain how to construct the matrices $\mv{L}_i$ and $\mv{D}_i$ of Proposition~\ref{ra_chol}. To simplify the notation, let $p$ denote the size of the vector $\mv{u}_{i,j}$ (which is either $\max(\lfloor\alpha\rfloor,1)$ if $i=0$ or $\lceil\alpha\rceil$ if $i>0$).
Introduce the matrices $\mv{B}^1 = \mv{L}_{1,1}$ and $\mv{B}^k = [\mv{L}_{k,k-1}\, \mv{L}_{k,k}], k>1,$ corresponding to the part of $\mv{L}_i$ that is related to the $k$th location. Similarly, let  $\mv{F}^k$ denote the diagonal matrix corresponding to the part of $\mv{D}^{-1}$ that is related to the $k$th location. That is, $\mv{D} = \text{diag}(\mv{F}^1,\ldots,\mv{F}^n)^{-1}.$
These are low-dimensional matrices with a dimension $2p\times p$ and $p\times p$, respectively.
Further, introduce the covariance matrices $\mv{\Sigma}^1 = \mv{r}_i(t_1,t_1)$ and 
$$
\mv{\Sigma}^k = \begin{bmatrix}
    \mv{r}_i(t_1,t_1) & \mv{r}_i(t_1,t_2) \\
    \mv{r}_i(t_2,t_1) & \mv{r}_i(t_2,t_2)
\end{bmatrix}, \quad k=2,\ldots,n,
$$
and for an $n\times n$ matrix $\mv{M}$, let $\mv{M}_{a:b,c:d}$, for natural numbers $a\leq b \leq n$ and $c\leq d \leq n$ denote the submatrix obtained by extracting rows $a,\ldots,b$ and columns $c,\ldots, d$ from $\mv{M}$. 

We are now ready to construct the elements in the matrices. We set $\mv{F}^1_{1,1} = 1/\mv{\Sigma}^1_{1,1}$ and 
\begin{align*}
\mv{F}^1_{j,j} &= \mv{\Sigma}^1_{j,j} -  \mv{\Sigma}^1_{j,1:(j-1)}(\mv{\Sigma}^1_{1:(j-1),1:(j-1)})^{-1}\mv{\Sigma}^1_{1:(j-1),j}, & &j=2,\ldots,p \\
\mv{F}^k_{j,j} &= \mv{\Sigma}^k_{p+j,p+j} -  \mv{\Sigma}^k_{p+j,1:(p+j-1)}(\mv{\Sigma}^k_{1:(p+j-1),1:(p+j-1)})^{-1}\mv{\Sigma}^k_{1:(p+j-1),j}, & &j=1,\ldots,p, k>1.
\end{align*}
Further, we set $\mv{B}^1_{j,j} = 1$ and $\mv{B}^1_{j,j+i} = 0$ for $i>0$ and 
$$
\mv{B}^1_{j,1:(j-1)} = - \mv{\Sigma}^1_{j,1:(j-1)}(\mv{\Sigma}^1_{1:(j-1),1:(j-1)})^{-1},\quad i=2,\ldots,p.
$$
Finally, we set $\mv{B}^k_{j,p+j} = 1$ and $\mv{B}^k_{j,p+j+i} = 0$ for $i>0$ and
$$
\mv{B}^k_{j,1:(j-1)} = - \mv{\Sigma}^k_{j,1:(p+j-1)}(\mv{\Sigma}^1_{1:(p+j-1),1:(p+j-1)})^{-1},\quad i=1,\ldots,p, k>1.
$$
These expressions follow directly from using the fact that $\mv{u}_i$ is a first order Markov process and then writing the joint density of $\mv{u}_i$ as 
$$
\pi(\mv{u}_i) = \pi(\mv{u}_{i,1})\prod_{k=w}^n \pi(\mv{u}_{i,k}|\mv{u}_{i,k-1}).
$$
After this, standard results for conditional Gaussian densities give the expressions above \citep[see, for example][]{Datta2016}. To compute the elements in each block, we need to perform $p$ solves of matrices of size $p, p+1, \ldots, 2p$, the total cost of this is $\mathcal{O}(p^4)$ and since we have $n$ blocks, the total cost is thus $\mathcal{O}(np^4)$.


\section{Collected proofs}\label{markovsection}

We start with a simple technical lemma needed for the proof of Theorem \ref{ra_bound}.

\begin{lemma}\label{lem:int_ineq}
    Let $h:\mathbb{R}\to\mathbb{R}$ be a measurable function. Then, the following inequality holds:
    $$\int_a^b \int_a^b h(x-y)^2 dx\, dy \leq (b-a) \int_{a-b}^{b-a} h(x)^2 dx,$$
    where $a,b\in\mathbb{R}, a<b$. 
\end{lemma}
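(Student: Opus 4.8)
The plan is to reduce the double integral to a single weighted integral of $h^2$ by a change of variables followed by an interchange in the order of integration. First I would fix $y\in[a,b]$ and substitute $u=x-y$ in the inner integral, so that $\int_a^b h(x-y)^2\,dx=\int_{a-y}^{b-y}h(u)^2\,du$; this rewrites the left-hand side as $\int_a^b\int_{a-y}^{b-y}h(u)^2\,du\,dy$. Since $h^2\ge 0$ is measurable, Tonelli's theorem applies and justifies interchanging the order of integration without any integrability hypothesis on $h$.

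Next I would describe the region of integration in the $(u,y)$-plane, namely $\{(u,y): a\le y\le b,\ a-y\le u\le b-y\}$. Projecting this set onto the $u$-axis gives $u\in[a-b,b-a]$, and for each fixed $u$ the admissible range of $y$ is $[\max(a,a-u),\min(b,b-u)]$. A short case analysis ($u\ge 0$ versus $u\le 0$) shows that the length of this interval equals exactly $(b-a)-|u|$.

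Substituting this back, the double integral becomes $\int_{a-b}^{b-a}h(u)^2\bigl((b-a)-|u|\bigr)\,du$. Since the weight satisfies $(b-a)-|u|\le b-a$ pointwise on $[a-b,b-a]$ and the integrand $h^2$ is nonnegative, I obtain $\int_{a-b}^{b-a}h(u)^2\bigl((b-a)-|u|\bigr)\,du\le (b-a)\int_{a-b}^{b-a}h(u)^2\,du$, which is the claimed bound.

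The argument is entirely elementary, so I do not anticipate a genuine obstacle. The only point requiring a little care is the bookkeeping of the integration region when switching the order of integration — specifically, verifying that the inner $y$-length collapses to the triangular weight $(b-a)-|u|$ rather than being constant — after which the desired inequality follows immediately from discarding the nonnegative correction term $|u|$.
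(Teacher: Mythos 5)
Your argument is correct and rests on the same key step as the paper's proof, namely the substitution $u=x-y$ in the inner integral followed by exploiting the nonnegativity of $h^2$; the paper simply enlarges the inner interval $[a-y,b-y]$ to $[a-b,b-a]$ directly, whereas you first apply Tonelli to obtain the exact triangular weight $(b-a)-|u|$ and then bound it by $b-a$. Your version is marginally more informative (it identifies the exact value of the double integral), but the two proofs are essentially the same.
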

\begin{proof}
    We have, by the change of variables $u = x-y$, that
    \begin{align*}
        \int_a^b\int_a^b h(x-y)^2 dx\, dy&= \int_a^b \int_{a-y}^{b-y} h(u)^2 du\,dy\\
        &\leq \int_a^b \int_{a-b}^{b-a} h(u)^2 du\,dy = (b-a)\int_{a-b}^{b-a} h(u)^2 du,
    \end{align*}
where we used the fact that $a\leq y\leq b$.
\end{proof}

\begin{proof}[of Theorem \ref{ra_bound}] 
Our idea is to use the exponential convergence of the best rational approximation with respect to the $L_\infty$ norm. To such an end, \citet[][Theorem 1]{stahl2003best}, gives us that for every $\alpha \in (0,1)$, there exist polynomials of degree $m$, $p_m(\cdot)$ and $q_m(\cdot)$ such that the following exponential bound holds:
	\begin{equation}\label{eq:expconvbound}
	\sup_{x\in [0,1]}\left|x^{\alpha} - \frac{p_m(x)}{q_m(x)}\right| \leq C_\alpha e^{-2\pi \sqrt{\alpha m}},
	\end{equation}
    where \todo{$C_\alpha>0$} is a constant that only depends on $\alpha$. 
Now, let $g(w) = (1+\kappa^{-2}w^2)^{-1}$ and observe that for every $w\in\mathbb{R}$, we have $g(w) \in [0,1]$. Further, let $\alpha = \nu+0.5$ and $f_{\alpha}(\cdot)$ be the spectral density of a stationary Gaussian process with a Matérn covariance function \eqref{eq:matern_cov}. Then,  $f_{\alpha}(w) = A\sigma^2\kappa^{-2\alpha}(g(w))^{\alpha}$, \todo{where $A=\sqrt{2}\kappa^{2\nu}\Gamma(\nu+\nicefrac{1}{2})\Gamma(\nu)^{-1}$}. 	\todo{Observe that we have the following decompositions:}
  $$\todo{f_{\alpha}(w) = f_{\lfloor\alpha\rfloor}(w) (g(w))^{\{\alpha\}}\quad\hbox{and}\quad f_{m,\alpha}(w) = f_{\lfloor\alpha\rfloor}(w) \frac{p_m(g(w))}{q_m(g(w))}.}$$
  We also have, for any $\alpha>1/2$, that 
  \begin{align*}
    \todo{\sup_{w \in \mathbb{R}}\left| (g(w))^{\{\alpha\}} - \frac{p_m(g(w))}{q_m(g(w))}\right| = \sup_{x \in [0,1]} \left| x^{\{\alpha\}} - \frac{p_m(x)}{q_m(x)}\right| 
    \leq C_{\{\alpha\}} e^{-2\pi \sqrt{\{\alpha\} m}},}
    \end{align*}
 where we used \eqref{eq:expconvbound} and that $0\leq x\leq 1$. \todo{Because $\sup_{w\in\mathbb{R}} |f_{\lfloor\alpha\rfloor}(w)| = |f_{\lfloor\alpha\rfloor}(0)| = A\sigma^2\kappa^{-2\alpha}$,}
 \begin{align}
  \todo{\sup_{w \in \mathbb{R}}\left| f_{\alpha}(w) - f_{m,\alpha}(w)\right|} &\todo{\leq \sup_{w \in \mathbb{R}}|f_{\lfloor\alpha\rfloor}(w)| \sup_{w\in\mathbb{R}}\left|(g(w))^{\{\alpha\}} - \frac{p_m(g(w))}{q_m(g(w))}\right|}\nonumber\\
  &\todo{\leq A\sigma^2\kappa^{-2\alpha}C_{\{\alpha\}} e^{-2\pi \sqrt{\{\alpha\} m}}}.\label{eq:bound_spec_dens_f}
 \end{align}
Now, let $I=[a,b]\subset\mathbb{R}$ be an interval in $\mathbb{R}$ and $1_{I} : \mathbb{R} \rightarrow \mathbb{R}$ denote the indicator function such that $1_{I}(x) = 1$ if $x\in I$ and $1_{I}(x) = 0$ otherwise. We then have by Plancherel's theorem \cite[e.g.][Corollary 3.13]{mclean}, the convolution theorem \cite[e.g.][p.73]{mclean}, Lemma~\ref{lem:int_ineq}, \todo{and the bound \eqref{eq:bound_spec_dens_f} above, that}
	\begin{align*}
		\|r_{m,\alpha} - r_\alpha\|_{L_2(I\times I)}^2 &\leq (b-a) \|\varrho_{m,\alpha} - \varrho_\alpha\|_{L_2(a-b,b-a)}^2 = (b-a) \|(\varrho_{m,\alpha} - \varrho_\alpha)1_{[a-b,b-a)]}\|_{L_2(\mathbb{R})}^2\\
		&= (b-a) \left\|(f_{m,\alpha} - f_\alpha) \ast 2(b-a)\frac{\sin((b-a)w)}{(b-a)w}\right\|_{L_2(\mathbb{R})}^2 \\
		&\todo{\leq (b-a) \sup_{w\in \mathbb{R}}\left((g(w))^{\{\alpha\}} - \frac{p_m(w)}{q_m(w)}\right)^2 \left\|2(b-a)\frac{\sin((b-a)w)}{(b-a)w}\right\|_{L_2(\mathbb{R})}^2} \\
  &\todo{\leq 4(b-a)^3  A^2\sigma^4\kappa^{-4\alpha}C_{\{\alpha\}}^2 e^{-4\pi \sqrt{\{\alpha\} m}} \left\|\frac{\sin((b-a)w)}{(b-a)w}\right\|_{L_2(\mathbb{R})}^2}\\
  & = \todo{\pi (b-a)^2 A^2\sigma^4\kappa^{-4\alpha}C_{\{\alpha\}}^2 e^{-4\pi \sqrt{\{\alpha\} m}},}
	\end{align*}
	\todo{where $\ast$ denotes the convolution and we used the fact that $\left\|\frac{\sin((b-a)w)}{(b-a)w}\right\|_{L_2(\mathbb{R})}^2 = \pi/(b-a)$}. 
  \todo{Let us now consider another strategy that helps us obtain another bound for $\alpha>1$, that gets better as $\alpha$ increases. First, observe that by Young's convolution inequality \cite[Theorem 3.9.4]{bogachev2007measure}, we have}
  $\todo{\|f\ast g\|_{L_2(\mathbb{R})}\leq \|f\|_{L_1(\mathbb{R})}\|g\|_{L_2(\mathbb{R})},}$
  \todo{which yields,}
  \begin{align*}
    \todo{\left\|f_{\lfloor\alpha\rfloor}(w) \ast \frac{\sin((b-a)w)}{(b-a)w}\right\|_{L_2(\mathbb{R})}} &\todo{\leq \|f_{\lfloor\alpha\rfloor}(w)\|_{L_1(\mathbb{R})} \left\|\frac{\sin((b-a)w)}{(b-a)w}\right\|_{L_2(\mathbb{R})}}\\
    &\todo{= A\sigma^2\kappa^{-2\alpha}M_{\lfloor\alpha\rfloor,\kappa} \sqrt{\frac{\pi}{b-a}}},
  \end{align*}
  \todo{where $M_{0,\kappa} = \infty$ and $M_{n,\kappa} =  \|(g(w))^{n}\|_{L_1(\mathbb{R})} = \kappa \pi \Gamma(2n-1)/(4^{n-1}\Gamma(n)^2), n\geq 1$. Therefore, by proceeding as in the first case, and using the above inequality, we have}
  \begin{align*}
		\todo{\|r_{m,\alpha} - r_\alpha\|_{L_2(I\times I)}^2} &\leq \todo{(b-a) \left\|(f_{m,\alpha} - f_\alpha) \ast 2(b-a)\frac{\sin((b-a)w)}{(b-a)w}\right\|_{L_2(\mathbb{R})}^2} \\
		&\todo{\leq 4(b-a)^3 \sup_{w\in \mathbb{R}}\left((g(w))^{\{\alpha\}} - \frac{p_m(w)}{q_m(w)}\right)^2 \left\|f_{\lfloor\alpha\rfloor}(w) \ast\frac{\sin((b-a)w)}{(b-a)w}\right\|_{L_2(\mathbb{R})}^2}\\
  &\todo{\leq 4\pi (b-a)^2  A^2\sigma^4\kappa^{-4\alpha} M_{\lfloor\alpha\rfloor,\kappa}^2 C_{\{\alpha\}}^2 e^{-4\pi \sqrt{\{\alpha\} m}}}.\\
	\end{align*}
  \todo{Finally, by combining both inequalities above, we obtain }
  $$\todo{\|r_{m,\alpha} - r_\alpha\|_{L_2(I\times I)} \leq 2 \sqrt{\pi} (b-a) A \sigma^2 \kappa^{-2\alpha} \min\{1, M_{\lfloor\alpha\rfloor,\kappa}\} C_{\{\alpha\}} e^{-2\pi \sqrt{\{\alpha\} m}}}.$$
This proves \eqref{total_error_bound}. 

Now, for the $L_\infty(I \times I)$ estimate. Recall the notation from the first part of the proof. Also, recall the definition of the Sobolev space $H^s(\mathbb{R})$ for $s\geq 0$ \cite[see, e.g.][p.75-76]{mclean}. Then, for $s>\nicefrac{1}{2}$, we have by definition of the Sobolev norm that
\begin{align*}
\|\varrho_\alpha - \varrho_{m,\alpha}\|_{H^s(\mathbb{R})}&= \|(1+w^2)^{\nicefrac{s}{2}}(f_\alpha - f_{m,\alpha})\|_{L_2(\mathbb{R})}\\
&\todo{\leq  \sup_{w\in \mathbb{R}}\left|(g(w))^{\{\alpha\}} - \frac{p_m(w)}{q_m(w)}\right| \|f_{\lfloor \alpha\rfloor} (1+w^2)^{\nicefrac{s}{2}}\|_{L_2(\mathbb{R})}}\\
&\todo{\leq A \sigma^2\kappa^{-2\alpha}C_{\{\alpha\}}e^{-2\pi \sqrt{\{\alpha\}m}}\|(1+w^2)^{s/2-\lfloor \alpha \rfloor}\|_{L_2(\mathbb{R})}},
\end{align*}
Now, because $\alpha>1$, we have that $\lfloor \alpha\rfloor \geq 1$. Further,  $\|(1+w^2)^{s/2-\lfloor \alpha \rfloor}\|_{L_2(\mathbb{R})} <\infty$ if, and only if, $2\lfloor \alpha \rfloor-s>\nicefrac{1}{2}$. In particular, since $\lfloor \alpha\rfloor \geq 1$ we can find $\todo{s_\alpha}>\nicefrac{1}{2}$ such that ${\|(1+w^2)^{\todo{s_\alpha}/2-\lfloor \alpha \rfloor}\|_{L_2(\mathbb{R})} <\infty}$. Therefore, if $\alpha>1$, we can find $\todo{s_\alpha}>\nicefrac{1}{2}$ such that
$$\|\varrho_\alpha - \varrho_{m,\alpha}\|_{H^{\todo{s_\alpha}}(\mathbb{R})} \leq \todo{A \sigma^2\kappa^{-2\alpha}\breve{C}_\alpha C_{\{\alpha\}}e^{-2\pi \sqrt{\{\alpha\}m}}},$$
where the constant $\todo{\breve{C}_\alpha}$ depends on $\todo{s_\alpha}$, \todo{which only depends on $\alpha$}. Now, from Sobolev embedding, see, e.g., \cite[Theorem~3.26]{mclean}, \todo{since $s_\alpha>\nicefrac{1}{2}$}, we have that there exists $\todo{C_{sob,\alpha}}>0$, depending only on \todo{$s_\alpha$,  thus only on $\alpha$}, such that for every $x\in\mathbb{R}$,
$\todo{|\varrho_\alpha(x) - \varrho_{m,\alpha}(x)| \leq C_{sob,\alpha} \|\varrho_\alpha - \varrho_{m,\alpha}\|_{H^{s_\alpha}(\mathbb{R})}},$
so that
$$\todo{\sup_{x\in\mathbb{R}} |\varrho_\alpha(x) - \varrho_{m,\alpha}(x)| \leq C_{sob,\alpha} \|\varrho_\alpha - \varrho_{m,\alpha}\|_{H^s(\mathbb{R})} \leq A \sigma^2\kappa^{-2\alpha}C_{sob,\alpha}\breve{C}_\alpha C_{\{\alpha\}}e^{-2\pi \sqrt{\{\alpha\}m}}.}$$
Therefore,
$$\todo{\|r_{m,\alpha} - r_\alpha\|_{L_\infty(I\times I)} \leq \sup_{x\in\mathbb{R}} |\varrho_\alpha(x) - \varrho_{m,\alpha}(x)| \leq A \sigma^2\kappa^{-2\alpha}K_\alpha e^{-2\pi \sqrt{\{\alpha\}m}},}$$
\todo{where $K_\alpha>0$ is a constant that only depends on $\alpha$.} This proves \eqref{sup_bound}. 
\end{proof}

\todo{We now move to the proof of Proposition \ref{prp:partial_fractions}. To this end, we start by recalling the following result \citep[Lemma 2.1]{saff1995asymptotic}:}

\begin{lemma}\label{lem:roots_rat_approx}
  \todo{Let $0 < \alpha < 1$.}
  \begin{enumerate}
      \item \todo{The best uniform rational approximation $R_m^*(x)$ of the function $x^{\alpha}$ on $[0,1]$, among rational functions whose numerator has degree at most $m$, is such that both the numerator and denominator have degree exactly $m$.
      \item All $m$ zeros $\tilde{z}_1, \dots, \tilde{z}_m$ and poles $\tilde{p}_1, \dots, \tilde{p}_m$ of $R_m^*(\cdot)$ lie on the negative half-axis and are interlacing; i.e., with an appropriate numbering, 
      \begin{equation}\label{eq:zeros_poles_signs} 
      0 > \tilde{z}_1 > \tilde{p}_1 > \tilde{z}_2 > \tilde{p}_2 > \dots > \tilde{z}_m > \tilde{p}_m > -\infty.
      \end{equation}
      }
  \end{enumerate}
\end{lemma}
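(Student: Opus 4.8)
The plan is to combine the Chebyshev equioscillation characterization of the best uniform rational approximant with the fact that $x^{\alpha}$ is, up to a factor of $x$, a Stieltjes function. Concretely, the reflection formula yields the integral representation
\[
x^{\alpha} = \frac{\sin(\pi\alpha)}{\pi}\int_0^\infty \frac{x}{s+x}\,s^{\alpha-1}\,ds,\qquad 0<\alpha<1,
\]
so that $x^{\alpha-1}=x^{\alpha}/x$ is a Markov (Stieltjes) function whose representing measure is supported on $(0,\infty)$ and whose only singularities, after analytic continuation, lie along the negative real axis. This representation is the structural engine behind both claims: it is what ties the combinatorial information coming from equioscillation to the analytic location of the zeros and poles.

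For part 1 I would establish nondegeneracy. Writing $R_m^*=p/q$ with $\deg p,\deg q\le m$, de la Vall\'ee Poussin's theory says the error $x^{\alpha}-R_m^*$ equioscillates at $2m+2-\delta$ points of $[0,1]$, where $\delta\ge 0$ is the defect measuring how far $p,q$ fall short of degree $m$. Since $x^{\alpha}$ has a genuine branch point at $0$ it is not rational, so the error does not vanish identically; one then rules out $\delta>0$ using the total positivity of $x^{\alpha}$ — equivalently, the strict positivity of all its divided differences, or the nonvanishing of the Hankel determinants attached to the Stieltjes moment sequence $\int s^{k}\,d\mu(s)$. This forces $\deg p=\deg q=m$ and exactly $2m+2$ alternation points.

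For part 2 I would proceed in three steps. First, the best approximant has no pole in $[0,1]$: a pole there would make the error unbounded and hence strictly worse than the competitor $0$, contradicting optimality; thus the $2m+1$ sign changes forced by equioscillation are genuine zeros of $N(x):=x^{\alpha}q(x)-p(x)$ in $(0,1)$. Second, I would invoke the theory of rational approximation of Markov functions: because $x^{\alpha-1}$ is Stieltjes with representing measure on $(0,\infty)$, every pole and every zero of the type-$(m,m)$ best approximant is confined to the convex hull of the singular set, namely $(-\infty,0)$; here the integral representation is combined with an argument-principle count to control the zeros of $q$. Third — and this delivers the interlacing — the approximant inherits the Nevanlinna (Herglotz) property of mapping the upper half-plane into itself. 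A real rational Nevanlinna function has simple real poles with positive residues and a positive derivative on each pole-free interval, so it increases monotonically across that interval and therefore has exactly one zero strictly between consecutive poles; together with $\deg p=\deg q=m$ this produces the alternating chain \eqref{eq:zeros_poles_signs}.

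The main obstacle is the second step: rigorously confining all poles to the negative axis and establishing the Nevanlinna property of $R_m^*$. Equioscillation alone produces real zeros of the error inside $[0,1]$ but says nothing directly about the reality or sign of the poles; bridging that gap is exactly the substantive content of \citet[Lemma~2.1]{saff1995asymptotic} and rests on the deeper Gonchar--Stahl theory for Markov functions underlying \citet{stahl2003best}. By contrast, the nondegeneracy in part 1 and the passage from the Nevanlinna property to the interlacing in part 2 are comparatively routine once the pole-location step is secured.
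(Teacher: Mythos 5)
You should first note what the paper itself does here: it does not prove this lemma at all --- it is recalled verbatim as \citet[Lemma~2.1]{saff1995asymptotic}, so the benchmark ``proof'' is a citation. Measured against the task of actually proving the statement, your proposal has a genuine gap, and you name it yourself: the decisive second step --- confining every pole and zero of $R_m^*$ to the negative half-axis and establishing that $R_m^*$ has the Nevanlinna/Herglotz property --- is deferred to ``the substantive content of \citet[Lemma~2.1]{saff1995asymptotic}'', which is precisely the statement under proof. As a standalone argument this is circular: everything that is hard in the lemma is assumed, and the parts you do argue (equioscillation, nondegeneracy, interlacing given the Herglotz property) are the routine consequences.

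Two concrete reasons the sketch would not close without the cited machinery. First, $x^{\alpha}$ is not itself a Markov function; your own integral representation has kernel $x/(s+x)$, i.e.\ it is $x^{\alpha-1}$ that is Stieltjes. Best uniform approximation does not commute with multiplication by $x$, so off-the-shelf pole-localization theorems for Chebyshev approximation of Markov functions do not apply directly to $x^{\alpha}$; dealing with that extra factor is part of the actual work in Saff--Stahl and the Gonchar--Stahl theory behind \citet{stahl2003best}. Second, the Herglotz property of $R_m^*$ cannot be ``inherited'': the approximant is defined by an extremal property, not by an integral representation, so the simplicity and reality of its poles and the positivity of its residues must be derived --- and once they are, the interlacing is easy, which is why the pole-location step carries essentially all the content. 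The same caveat applies to your part-1 argument, where ruling out a positive defect via total positivity or Hankel determinants again leans on the Stieltjes structure of $x^{\alpha-1}$ rather than of $x^{\alpha}$. In short: your roadmap correctly identifies the architecture of the known proof, but as a proof it reduces --- as you candidly admit --- to the very citation the paper gives, with unproven scaffolding on top; the honest course, matching the paper, is to state the lemma as quoted from \citet{saff1995asymptotic}.
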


\todo{We are now in a position to prove Proposition \ref{prp:partial_fractions}.}

\begin{proof}[of Proposition \ref{prp:partial_fractions}]
  \todo{Define $\widetilde{P}_m(x) = \sum_{i=0}^m a_i x^i$ and $\widetilde{Q}_m(x) = \sum_{i=0}^m b_i x^i$ so that the best rational approximation of $x^{\alpha}$ on $[0,1]$ is $R_m^*(x) = \widetilde{P}_m(x)/\widetilde{Q}_m(x)$. Then, by Lemma~\ref{lem:roots_rat_approx}, 
  $\widetilde{P}_m(x) = m_{\widetilde{P}} (x- \tilde{z}_1) (x-\tilde{z}_2) \cdots (x-\tilde{z}_m),$
  and
  $\widetilde{Q}_m(x) = m_{\widetilde{Q}} (x- \tilde{p}_1) (x-\tilde{p}_2) \cdots (x-\tilde{p}_m),$
  where $m_{\widetilde{P}}$ and $m_{\widetilde{Q}}$ are constants and $\{\tilde{z}_i\}_{i=1}^m, \{\tilde{p}_i\}_{i=1}^m$ satisfy \eqref{eq:zeros_poles_signs}.
  Let $\sgn(x) = |x|/x$ denote the sign of $x\neq 0$. Since $\tilde{z}_i,\tilde{p}_i <0$ for $i=1,\ldots,m$, we have that for every $x\in (0,1)$, $\sgn(\widetilde{P}_m(x)) = \sgn(m_{\widetilde{P}})$ and $\sgn(\widetilde{Q}_m(x)) = \sgn(m_{\widetilde{Q}})$. Since $x^{\{\alpha\}} > 0$ for $x\in (0,1)$, $\widetilde{P}_m(x)$ and $\widetilde{Q}_m(x)$ must have the same sign for $x\in (0,1)$, so that 
  \begin{equation}\label{eq:signs_mP_mQ}
    \sgn(m_{\widetilde{P}}) = \sgn(m_{\widetilde{Q}}).
  \end{equation}
  Now, observe that 
  $P_m(x) = x^m \widetilde{P}_m(x^{-1})$ and $Q_m(x) = x^m \widetilde{Q}_m(x^{-1}).$
  Thus,
  \begin{align}
    P_m(x) &= x^m \widetilde{P}_m(x^{-1}) = m_{\widetilde{P}} (1-x\tilde{z}_1)\cdots (1-x\tilde{z}_m)\nonumber\\
    &= m_{\widetilde{P}} \tilde{z}_1 \cdots \tilde{z}_m (-1)^m \left(x-\frac{1}{\tilde{z}_1}\right)\cdots \left(x-\frac{1}{\tilde{z}_m}\right) \nonumber\\
    &= m_P (x-z_1) \cdots (x-z_m),\label{eq:P_m}
  \end{align}
  where $m_P = m_{\widetilde{P}} \tilde{z}_1 \cdots \tilde{z}_m (-1)^m$ and $z_i = 1/\tilde{z}_i$ for $i=1,\ldots,m$. Similarly, we have that
  \begin{align}
    Q_m(x) &= x^m \widetilde{Q}_m(x^{-1}) = m_{\widetilde{Q}} (1-x\tilde{p}_1)\cdots (1-x\tilde{p}_m)\nonumber\\
    &= m_{\widetilde{Q}} \tilde{p}_1 \cdots \tilde{p}_m (-1)^m \left(x-\frac{1}{\tilde{p}_1}\right)\cdots \left(x-\frac{1}{\tilde{p}_m}\right) \nonumber\\
    &= m_Q (x-p_1) \cdots (x-p_m),\label{eq:Q_m}
  \end{align}
  where $m_Q = m_{\widetilde{Q}} \tilde{p}_1 \cdots \tilde{p}_m (-1)^m$ and $p_i = 1/\tilde{p}_i$ for $i=1,\ldots,m$. Now, observe that
  $$\sgn(m_P) = (-1)^m \sgn(\tilde{z}_1\cdots \tilde{z}_m) \sgn(m_{\widetilde{P}}) = (-1)^{2m} \sgn(m_{\widetilde{P}}) = \sgn(m_{\widetilde{P}}),$$
  where we used \eqref{eq:zeros_poles_signs}. Similarly, we have that $\sgn(m_Q) = \sgn(m_{\widetilde{Q}})$ so that \eqref{eq:signs_mP_mQ} implies that
  \begin{equation}\label{eq:signs_mP_mQ_2}
    \sgn(m_P) = \sgn(m_Q).
  \end{equation}
  Furthermore, \eqref{eq:zeros_poles_signs} implies
  \begin{equation}\label{eq:zeros_poles_signs_2}
    0 > p_n > z_n > p_{n-1} > z_{n-1} > \dots > p_1 > z_1 > -\infty.
  \end{equation}
  Therefore, $P_m(\cdot)$ and $Q_m(\cdot)$ have no common roots and do not have multiple roots. In particular, the function $R_m(x) = P_m(x)/Q_m(x)$ can be decomposed in partial fractions as
  \begin{equation}\label{eq:partial_fractions_expr}
    R_m(x) = k + \sum_{i=1}^m \frac{c_i}{x-p_i}.
  \end{equation}
  Observe that we already have by \eqref{eq:zeros_poles_signs_2} that $p_i < 0$ for $i=1,\ldots,m$. It remains to show that $k>0$ and $c_i>0$ for $i=1,\ldots,m$. 
  Let us start by handling $k$. Note that by \eqref{eq:signs_mP_mQ_2} 
  $$k = \lim_{x\to\infty} R_m(x) = \lim_{x\to\infty} \frac{P_m(x)}{Q_m(x)} = \lim_{x\to\infty} \frac{m_P \left(1-\frac{z_1}{x}\right) \cdots \left(1-\frac{z_m}{x}\right)}{m_Q \left(1-\frac{p_1}{x}\right) \cdots \left(1-\frac{p_m}{x}\right)} = \frac{m_P}{m_Q} > 0.$$
  To conclude the proof it remains to show that $c_i>0$ for $i=1,\ldots,m$. To this end, observe that $p_1, \dots, p_m$ are simple poles of $R_m(x)$ and that $c_i$ is the residual of $R_m$ associated to $p_i$. Indeed, by \eqref{eq:partial_fractions_expr}, we have that
  $$\lim_{x\to p_i} (x-p_i)R_m(x) = \lim_{x\to p_i} \left[c_i + k(x-p_i) + (x-p_i)\sum_{j\neq i} \frac{c_j}{x-p_j} \right] = c_i.$$
  Furthermore, recall that $Q_m(p_i) = 0$ for $i=1,\ldots,m$, so that
  \begin{align*}
  c_i &= \lim_{x\to p_i} (x-p_i) \frac{P_m(x)}{Q_m(x)} = P_m(p_i) \lim_{x\to p_i} \frac{x-p_i}{Q_m(x)} = P_m(p_i) \lim_{x\to p_i} \frac{x-p_i}{Q_m(x)-Q_m(p_i)}\\
  &= P_m(p_i) \lim_{x\to p_i} \frac{1}{\frac{Q_m(x) - Q_m(p_i)}{x-p_i}} = \frac{P_m(p_i)}{Q_m'(p_i)}.
  \end{align*}
  So $c_i = P_m(p_i)/Q_m'(p_i) > 0$ for $i=1,\ldots,m$. Let us first study the sign of $P_m(p_i)$. By \eqref{eq:zeros_poles_signs_2}, we have $m-i$ values in $\{z_i\}_{i=1}^m$ that are larger than $p_i$, so that
  $$
  \sgn(P_m(p_i)) = \sgn\Bigl(m_P \prod_{j=1}^m (p_i-z_j)\Bigr) = \sgn(m_P) (-1)^{m-i}.
  $$
  Let us now study the sign of $Q_m'(p_i)$. We have that
  $$
  Q_m'(x) = m_Q \sum_{j=1}^m \prod_{k\neq j} (x-p_k) \quad \text{so that}\quad Q_m'(p_i) = m_Q \prod_{j\neq i} (p_i - p_j).$$
  By \eqref{eq:zeros_poles_signs_2} we have $m-i$ values in $\{p_j\}_{j=1}^m$ that are strictly larger than $p_i$, so that
  $$
  \sgn(Q_m'(p_i)) = \sgn\Bigl(m_Q \prod_{j\neq i} (p_i - p_j)\Bigr) = \sgn(m_Q) (-1)^{m-i}.$$
  Therefore, by \eqref{eq:signs_mP_mQ_2}, we have that 
  $$\sgn(c_i) = \frac{\sgn(P_m(p_i))}{\sgn(Q_m'(p_i))} = \frac{\sgn(m_P) (-1)^{m-i}}{\sgn(m_Q)(-1)^{m-i}} = 1,$$
  for $i=1,\ldots,m$. This shows that $c_i > 0$ for $i=1,\ldots,m$ and concludes the proof.
  }
\end{proof}

\begin{proof}[of Proposition \ref{cov_prop}]
Note that
\begin{align*}
 &\frac{1}{(\kappa^2p)^{k}(\kappa^2(1-p)+w^2)}-\sum_{j=1}^{k}\frac{1}{(\kappa^2p)^{k+1-j}(\kappa^2+w^2)^j}\\
 &\quad = \frac{1}{(\kappa^2p)^{k}(\kappa^2(1-p)+w^2)}-\frac{1}{(\kappa^2p)^{k+1}}\sum_{j=1}^{k}\left(\frac{\kappa^{2} p}{\kappa^2+w^2}\right)^j
 =\frac{1}{(\kappa^2+w^2)^{k}(\kappa^2(1-p)+w^2)},
\end{align*}
where we used the closed-form expression of the geometric sum to arrive at the final identity.
 Based on this, we can rewrite the spectral density \eqref{partial_frac} as
\begin{equation*}
\begin{aligned}
f_{m,\alpha}(w)= &A\sigma^2\kappa^{-2\alpha}\bigg[\frac{k \kappa^{2\lfloor \alpha \rfloor}}{(\kappa^2+w^2)^{\lfloor \alpha \rfloor}}\\
 &+\sum_{i=1}^{m}c_i\kappa^{2\lfloor \alpha \rfloor +2}
 \bigg(\sum_{j=1}^{\lfloor \alpha \rfloor}\frac{1}{(\kappa^2p)^{\lfloor \alpha \rfloor}(\kappa^2(1-p_i)+w^2)}
-\frac{1}{(\kappa^2p)^{\lfloor \alpha \rfloor+1-j}(\kappa^2+w^2)^{j}}\bigg)\bigg].
\end{aligned}
\end{equation*}

Finally, the explicit expression for the covariance function \eqref{covrational} thus directly follows by taking the inverse Fourier transform of spectral density $f_{m,\alpha}$ and using the linearity of the inverse Fourier transform. 
\end{proof}

\begin{proof}[of Proposition \ref{ra_prec}] 
We begin by recalling that if $v(\cdot)$ is a stationary Gaussian process on $\mathbb{R}$ with spectral density $h$, then by \cite[Theorem 10.1]{pitt1971markov}, the process $v$ will be a Markov process of order $p$, $p\in \mathbb{N}$, on $\mathbb{R}$, if, and only if, the spectral density of $v$, namely $h$, is a reciprocal of a polynomial of degree $2p$. 

The spectral density of the process $u_0$ is $f_{m,0,\alpha}$ and the spectral density of $u_i$, where $i=1,\ldots,m$, is $f_{m,i,\alpha}$, where $f_{m,0,\alpha}$ and $f_{m,i,\alpha}$ were defined in \eqref{partial_frac}. Moreover, observe that $f_{m,0,\alpha}$ is a reciprocal of a polynomial with degree $2\lfloor\alpha\rfloor$, and $f_{m,i,\alpha}$, $i=1,\ldots, m$, are reciprocals of polynomials of degree $2(\lfloor\alpha\rfloor+1)$, thus $u_0$ is a markov process of order $\lfloor\alpha\rfloor$ and $u_j, j=1,\ldots,m$ are Markov processes of order $\lfloor\alpha\rfloor+1$. 
Now, it is well-known, see \cite{whittle63} and \cite[Theorem 4.7 and 4.8]{lindgren2012stationary}, that $u_0$ also solves
\[
\begin{cases}
k(\kappa+\partial_x)(\kappa^2-\Delta)^{\frac{\lfloor \alpha \rfloor-1}{2}} u_0 = \mathcal{W}&\text{when}~\lfloor \alpha \rfloor~\text{is odd},\\
k(\kappa^2-\Delta)^{\frac{\lfloor \alpha \rfloor}{2}}u_0 =\mathcal{W}&\text{when}~\lfloor \alpha \rfloor~\text{is even},
\end{cases}
\]
in the sense that the solution to the above equation has the same covariance function as $u_0$. Similarly, we also have that $u_i$ solves
\[
\begin{cases}
c_i(\kappa\sqrt{1-p_i}+\partial_x)(\kappa+\partial_x)(\kappa^2-\Delta)^{\frac{\lfloor \alpha \rfloor-1}{2}}u_i=\mathcal{W}, &\text{when}~\lfloor \alpha \rfloor~\text{is odd},\\
c_i(\kappa\sqrt{1-p_i}+\partial_x)(\kappa^2-\Delta)^{\nicefrac{\lfloor \alpha \rfloor}{2}}u_i=\mathcal{W}, &\text{when}~\lfloor \alpha \rfloor~\text{is even},
\end{cases}
\]
on interval $I$. Now, since each process is stationary, we start with the stationary distribution, i.e., $\mv{u}_0=[u_0(0), u'_0(0),\dotsc, u_0^{(\lfloor \alpha \rfloor)}(0)]^{\top}$ and $\mv{u}_i=[u_i(0), u_i'(0),\dotsc, u_i^{(\lceil \alpha \rceil)}(0)]^{\top}$, $i>0$, follow the stationary distribution, and thus, the restrictions of these processes from $\mathbb{R}$ to interval $I$ will also be Markov. Therefore, the tridiagonal structure of the precision matrix follows from the fact that if we consider the open set $O_j=(j-1,j+1)$, $j\geq 2$, $j\in \mathbb{N}$, contained in interval $I$, then, in view of the Markov property we have that for each $i=0,\dotsc,m$, $\mv{u}_{i,j} \perp \mv{u}_{i,k} | \mv{u}_{i,j-1}, \mv{u}_{i,j+1}$ for $k\in I\backslash \{j-1,j,j+1\}$, where $\mv{u}_{0,j}=(u_0(w_j),\dotsc, u_0^{(\lfloor \alpha \rfloor)}(w_j))$ and $\mv{u}_{i,j}=(u_i(w_j),\dotsc, u_i^{(\lceil \alpha \rceil)}(w_j))$, $i=1,2,\dotsc,m$, $w_j\in O_j$. Finally, by using this conditional independences and standard techniques (see, for example, the computations of \cite{rue2005gaussian} for conditional autoregressive models), we obtain the expressions for the local precision matrices $\mv{Q}_{i,j}$, $i,j=1,\ldots,n$.
\end{proof}

\begin{proof}[of Proposition~\ref{prop:chol}.]
First, recall that 
$
N = n \left( m \lceil \alpha \rceil + \max(\lfloor \alpha \rfloor, 1) \right).
$
According to Algorithm 2.9 in \cite{rue2005gaussian}, the computational cost of factorizing an $n \times n$ band matrix with bandwidth $p$ is $n(p^2 + 3p)$, and solving a linear system via back-substitution requires $2np$ floating-point operations. 
Since $\mv{Q}$ is an $N \times N$ matrix with bandwidth $2\lfloor \alpha \rfloor + 1$, the total number of floating-point operations required for the Cholesky factorization is
$
n \left( m \lceil \alpha \rceil + \max(\lfloor \alpha \rfloor, 1) \right) 
\left( (2\lfloor \alpha \rfloor + 1)^2 + 3(2\lfloor \alpha \rfloor + 1) \right).
$
Similarly, the floating-point operations required for solving the linear system are 
$
2n \left( m \lceil \alpha \rceil + \max(\lfloor \alpha \rfloor, 1) \right) 
(2\lfloor \alpha \rfloor + 1).
$
\end{proof}

\begin{proof}[of Proposition~\ref{prop:chol_post}]
By reordering $\bar{\mv{U}}$ by location, i.e. 
$$
\bar{\mv{U}} = [\mv{u}_0(t_1), \mv{u}_1(t_1), \ldots, \mv{u}_m(t_1), \mv{u}_0(t_2), \mv{u}_1(t_2), \ldots]^{\top},
$$
we have that $\mv{Q}_{\bar{\mv{U}}|\mv{y}}$ is an $N\times N$ matrix, with bandwidth $\lceil\alpha\rceil(m+1)$. Following the same strategy as in the proof of Proposition~\ref{prop:chol} gives that the Cholesky factor can be computed in 
$$
n(m\lceil\alpha\rceil + \max(\lfloor\alpha\rfloor,1)
)(\lceil\alpha\rceil^2(m+1)^2 + 3\lceil\alpha\rceil(m+1))
$$
floating point operations, and $2n(m\lceil\alpha\rceil + 1)\lceil\alpha\rceil(m+1)$ floating point operations are needed for solving a linear system through back-substitution.
Computing the reordering of $\bar{\mv{U}}$ and reordering the results back can clearly be done in $\mathcal{O}(N)$ cost as the reordering is explicitly known. 
\end{proof}
\bibliography{graph}

\begin{thebibliography}{57}
\providecommand{\natexlab}[1]{#1}
\providecommand{\url}[1]{\texttt{#1}}
\expandafter\ifx\csname urlstyle\endcsname\relax
  \providecommand{\doi}[1]{doi: #1}\else
  \providecommand{\doi}{doi: \begingroup \urlstyle{rm}\Url}\fi

\bibitem[Asar et~al.(2020)Asar, Bolin, Diggle, and Wallin]{asar2020jrssc}
Ozgur Asar, David Bolin, Peter~J. Diggle, and Jonas Wallin.
\newblock Linear mixed effects models for non-gaussian continuous repeated measurement data.
\newblock \emph{J. R. Stat. Soc. Ser. C Appl. Stat.}, 69\penalty0 (5):\penalty0 1015--1065, 2020.

\bibitem[Bachl et~al.(2019)Bachl, Lindgren, Borchers, and Illian]{inlabru}
Fabian~E. Bachl, Finn Lindgren, David~L. Borchers, and Janine~B. Illian.
\newblock {inlabru}: an {R} package for {Bayesian} spatial modelling from ecological survey data.
\newblock \emph{Methods Ecol. Evol.}, 10:\penalty0 760--766, 2019.

\bibitem[Bogachev(2007)]{bogachev2007measure}
Vladimir~Igorevich Bogachev.
\newblock \emph{Measure theory}, volume~1.
\newblock Springer, 2007.

\bibitem[Bolin and Simas(2023)]{rSPDE}
D.~Bolin and A.~B. Simas.
\newblock \emph{rSPDE: Rational Approximations of Fractional Stochastic Partial Differential Equations}, 2023.
\newblock URL \url{https://CRAN.R-project.org/package=rSPDE}.
\newblock R package version 2.3.3.

\bibitem[Bolin et~al.(2020)Bolin, Kirchner, and Kov{\'a}cs]{bolin2020numerical}
D.~Bolin, K.~Kirchner, and M.~Kov{\'a}cs.
\newblock Numerical solution of fractional elliptic stochastic {PDE}s with spatial white noise.
\newblock \emph{IMA J. Numer. Anal.}, 40\penalty0 (2):\penalty0 1051--1073, 2020.

\bibitem[Bolin et~al.(2023)Bolin, Simas, and Wallin]{bolin2023statistical}
D.~Bolin, A.~B. Simas, and J.~Wallin.
\newblock Statistical inference for {G}aussian {W}hittle--{M}at\'{e}rn fields on metric graphs.
\newblock \emph{arXiv preprint arXiv:2304.10372}, 2023.

\bibitem[Bolin et~al.(2024{\natexlab{a}})Bolin, Simas, and Wallin]{bolin2024gaussian}
D.~Bolin, A.~B. Simas, and J.~Wallin.
\newblock Gaussian {W}hittle--{M}at{\'e}rn fields on metric graphs.
\newblock \emph{Bernoulli}, 30\penalty0 (2):\penalty0 1611--1639, 2024{\natexlab{a}}.

\bibitem[Bolin et~al.(2024{\natexlab{b}})Bolin, Simas, and Xiong]{bolin2024covariance}
D.~Bolin, A.~B. Simas, and Z.~Xiong.
\newblock Covariance--based rational approximations of fractional {SPDE}s for computationally efficient {B}ayesian inference.
\newblock \emph{J. Comp. Graph. Stat.}, 33\penalty0 (1):\penalty0 64--74, 2024{\natexlab{b}}.

\bibitem[Bolin and Lindgren(2018)]{excursionspackage}
David Bolin and Finn Lindgren.
\newblock Calculating probabilistic excursion sets and related quantities using {excursions}.
\newblock \emph{J. Stat. Softw.}, 86\penalty0 (5):\penalty0 1--20, 2018.

\bibitem[Chen et~al.(2022)Chen, Ding, and Tuo]{chen2022kernel}
H.~Chen, L.~Ding, and R.~Tuo.
\newblock Kernel packet: An exact and scalable algorithm for {G}aussian process regression with {M}at{\'e}rn correlations.
\newblock \emph{J. Mach. Learn. Res.}, 23\penalty0 (127):\penalty0 1--32, 2022.

\bibitem[Cressie and Johannesson(2008)]{Cressie2008}
N.~Cressie and G.~Johannesson.
\newblock Fixed rank kriging for very large spatial data sets.
\newblock \emph{J. R. Stat. Soc. Ser. B. Stat. Methodol.}, 70\penalty0 (1):\penalty0 209--226, 2008.

\bibitem[Datta et~al.(2016)Datta, Banerjee, Finley, and Gelfand]{Datta2016}
A.~Datta, S.~Banerjee, A.~O. Finley, and A.~E. Gelfand.
\newblock Hierarchical nearest-neighbor {G}aussian process models for large geostatistical datasets.
\newblock \emph{J. Amer. Statist. Assoc.}, 111\penalty0 (514):\penalty0 800--812, 2016.

\bibitem[Furrer et~al.(2006)Furrer, Genton, and Nychka]{furrer2006covariance}
R.~Furrer, M.~Genton, and D.~Nychka.
\newblock Covariance tapering for interpolation of large spatial datasets.
\newblock \emph{J. Comp. Graph. Stat.}, 15\penalty0 (3):\penalty0 502--523, 2006.

\bibitem[Gramacy(2020)]{gramacy2020surrogates}
R.~B. Gramacy.
\newblock \emph{Surrogates: {G}aussian process modeling, design, and optimization for the applied sciences}.
\newblock Chapman and Hall/CRC, 2020.

\bibitem[Gramacy and Apley(2015)]{gramacy2015local}
R.~B. Gramacy and D.~W. Apley.
\newblock Local {G}aussian process approximation for large computer experiments.
\newblock \emph{J. Comp. Graph. Stat.}, 24\penalty0 (2):\penalty0 561--578, 2015.

\bibitem[Hartikainen and S{\"a}rkk{\"a}(2010)]{hartikainen2010kalman}
J.~Hartikainen and S.~S{\"a}rkk{\"a}.
\newblock Kalman filtering and smoothing solutions to temporal {G}aussian process regression models.
\newblock In \emph{2010 International Workshop on Machine Learning for Signal Processing}, pages 379--384. IEEE, 2010.

\bibitem[Higdon(2002)]{higdon2002space}
D.~Higdon.
\newblock Space and space-time modeling using process convolutions.
\newblock \emph{Quantitative methods for current environmental issues}, 3754:\penalty0 37--56, 2002.

\bibitem[Hofreither(2021)]{hofreither2021algorithm}
C.~Hofreither.
\newblock An algorithm for best rational approximation based on barycentric rational interpolation.
\newblock \emph{Numerical Algorithms}, 88\penalty0 (1):\penalty0 365--388, 2021.

\bibitem[Hong et~al.(2023)Hong, Song, Abdulah, Sun, Ltaief, Keyes, and Genton]{hong2023third}
Yiping Hong, Yan Song, Sameh Abdulah, Ying Sun, Hatem Ltaief, David~E Keyes, and Marc~G Genton.
\newblock The third competition on spatial statistics for large datasets.
\newblock \emph{J. Agric. Biol. Environ. Stat.}, 28\penalty0 (4):\penalty0 618--635, 2023.

\bibitem[Karvonen and S\"{a}rkk\"{a}(2016)]{karvonen2016approximate}
T.~Karvonen and S.~S\"{a}rkk\"{a}.
\newblock Approximate state-space {G}aussian processes via spectral transformation.
\newblock In \emph{2016 IEEE 26th International Workshop on Machine Learning for Signal Processing (MLSP)}, pages 1--6. IEEE, 2016.

\bibitem[Khristenko et~al.(2019)Khristenko, Scarabosio, Swierczynski, Ullmann, and Wohlmuth]{boundary_effect_WM}
U.~Khristenko, L.~Scarabosio, P.~Swierczynski, E.~Ullmann, and B.~Wohlmuth.
\newblock Analysis of boundary effects on {PDE}-based sampling of whittle--matérn random fields.
\newblock \emph{SIAM J. Uncertainty Quantification}, 7\penalty0 (3):\penalty0 948--974, 2019.

\bibitem[Lindgren et~al.(2011)Lindgren, Rue, and Lindstr\"{o}m]{lindgren11}
F.~Lindgren, H.~Rue, and J.~Lindstr\"{o}m.
\newblock An explicit link between {G}aussian fields and {G}aussian {M}arkov random fields: the stochastic partial differential equation approach.
\newblock \emph{J. R. Stat. Soc. Ser. B. Stat. Methodol.}, 73\penalty0 (4):\penalty0 423--498, 2011.

\bibitem[Lindgren et~al.(2022)Lindgren, Bolin, and Rue]{lindgren2022spde}
F.~Lindgren, D.~Bolin, and H.~Rue.
\newblock The {SPDE} approach for {G}aussian and non-{G}aussian fields: 10 years and still running.
\newblock \emph{Spatial Statistics}, page 100599, 2022.

\bibitem[Lindgren and Rue(2015)]{lindgren2015bayesian}
Finn Lindgren and H{\aa}vard Rue.
\newblock Bayesian spatial modelling with {R-INLA}.
\newblock \emph{J. Stat. Softw.}, 63\penalty0 (19), 2015.

\bibitem[Lindgren(2012)]{lindgren2012stationary}
G.~Lindgren.
\newblock \emph{Stationary stochastic processes: theory and applications}.
\newblock CRC Press, 2012.

\bibitem[Ling(2019)]{Ling}
Y.~Ling.
\newblock Superfast inference for stationary {G}aussian processes in particle tracking microrheology.
\newblock PhD Thesis. http://hdl.handle.net/10012/15338, 2019.

\bibitem[Ling and Lysy(2022)]{supergauss}
Y.~Ling and M.~Lysy.
\newblock \emph{SuperGauss: Superfast Likelihood Inference for Stationary {G}aussian Time Series}, 2022.
\newblock URL \url{https://CRAN.R-project.org/package=SuperGauss}.
\newblock R package version 2.0.3.

\bibitem[Lorentz et~al.(1996)Lorentz, von Golitschek, and Makovoz]{lorentz1996constructive}
George~G Lorentz, Manfred von Golitschek, and Yuly Makovoz.
\newblock \emph{Constructive approximation: advanced problems}, volume 304.
\newblock Citeseer, 1996.

\bibitem[Mat\'{e}rn(1960)]{matern60}
B.~Mat\'{e}rn.
\newblock Spatial variation.
\newblock \emph{Meddelanden fr{\aa}n statens skogsforskningsinstitut}, 49\penalty0 (5), 1960.

\bibitem[McLean(2000)]{mclean}
W.~McLean.
\newblock \emph{Strongly elliptic systems and boundary integral equations}.
\newblock Cambridge University Press, 2000.

\bibitem[Nychka et~al.(2015)Nychka, Bandyopadhyay, Hammerling, Lindgren, and Sain]{Nychka2015}
D.~Nychka, S.~Bandyopadhyay, D.~Hammerling, F.~Lindgren, and S.~Sain.
\newblock A multiresolution {G}aussian process model for the analysis of large spatial datasets.
\newblock \emph{J. Comp. Graph. Stat.}, 24\penalty0 (2):\penalty0 579--599, 2015.

\bibitem[Padidar et~al.(2021)Padidar, Zhu, Huang, Gardner, and Bindel]{padidar2021scaling}
M.~Padidar, X.~Zhu, L.~Huang, J.~Gardner, and D.~Bindel.
\newblock Scaling {G}aussian processes with derivative information using variational inference.
\newblock \emph{Advances in Neural Information Processing Systems}, 34:\penalty0 6442--6453, 2021.

\bibitem[Pitt(1971)]{pitt1971markov}
L.~D. Pitt.
\newblock A {M}arkov property for {G}aussian processes with a multidimensional parameter.
\newblock \emph{Archive for Rational Mechanics and Analysis}, 43\penalty0 (5):\penalty0 367--391, 1971.

\bibitem[Porcu et~al.(2023)Porcu, Bevilacqua, Schaback, and Oates]{porcu2023mat}
E.~Porcu, M.~Bevilacqua, R.~Schaback, and C.~J. Oates.
\newblock The {M}at\'rn model: A journey through statistics, numerical analysis and machine learning.
\newblock \emph{arXiv preprint arXiv:2303.02759}, 2023.

\bibitem[Rahimi and Recht(2007)]{rahimi2007random}
A.~Rahimi and B.~Recht.
\newblock Random features for large-scale kernel machines.
\newblock \emph{Advances in neural information processing systems}, 20, 2007.

\bibitem[Remez(1934)]{remez1934determination}
E.~Y. Remez.
\newblock Sur la d{\'e}termination des polyn{\^o}mes d’approximation de degr{\'e} donn{\'e}e.
\newblock \emph{Communications of the Kharkov Mathematical Society}, 10\penalty0 (196):\penalty0 41--63, 1934.

\bibitem[Roininen et~al.(2018)Roininen, Lasanen, Orisp\"{a}\"{a}, and S\"{a}rkk\"{a}]{Roininen2018}
L.~Roininen, S.~Lasanen, M.~Orisp\"{a}\"{a}, and S.~S\"{a}rkk\"{a}.
\newblock Sparse approximations of fractional {M}at\'{e}rn fields.
\newblock \emph{Scand. J. Stat.}, 45\penalty0 (1):\penalty0 194--216, 2018.

\bibitem[Roos et~al.(2021)Roos, Gessner, and Hennig]{de2021high}
F.~D. Roos, A.~Gessner, and P.~Hennig.
\newblock High-dimensional {G}aussian process inference with derivatives.
\newblock In \emph{International Conference on Machine Learning}, pages 2535--2545. PMLR, 2021.

\bibitem[Rue and Held(2005)]{rue2005gaussian}
H.~Rue and L.~Held.
\newblock \emph{Gaussian {M}arkov random fields}, volume 104 of \emph{Monographs on Statistics and Applied Probability}.
\newblock Chapman \& Hall/CRC, Boca Raton, FL, 2005.
\newblock Theory and applications.

\bibitem[Rue et~al.(2009)Rue, Martino, and Chopin]{rue2009}
H.~Rue, S.~Martino, and N.~Chopin.
\newblock Approximate {Bayesian} inference for latent {Gaussian} models using integrated nested {Laplace} approximations (with discussion).
\newblock \emph{J. R. Stat. Soc. Ser. B. Stat. Methodol.}, 71:\penalty0 319--392, 2009.

\bibitem[Saff and Stahl(1995)]{saff1995asymptotic}
E~Saff and H~Stahl.
\newblock Asymptotic distribution of poles and zeros of best rational approximants to x\^{}$\alpha$ on 0, 1.
\newblock \emph{Banach Center Publications}, 31\penalty0 (1):\penalty0 329--348, 1995.

\bibitem[Santner et~al.(2003)Santner, Williams, Notz, and Williams]{santner2003design}
T.~J. Santner, B.~J. Williams, W.~I. Notz, and B.~J Williams.
\newblock \emph{The design and analysis of computer experiments}, volume~1.
\newblock Springer, 2003.

\bibitem[S\"{a}rkk\"{a} and Hartikainen(2012)]{sarkka2012infinite}
S.~S\"{a}rkk\"{a} and J.~Hartikainen.
\newblock Infinite-dimensional {K}alman filtering approach to spatio-temporal {G}aussian process regression.
\newblock In \emph{Artificial intelligence and statistics}, pages 993--1001. PMLR, 2012.

\bibitem[S\"{a}rkk\"{a} et~al.(2013)S\"{a}rkk\"{a}, Solin, and Hartikainen]{sarkka2013spatiotemporal}
S.~S\"{a}rkk\"{a}, A.~Solin, and J.~Hartikainen.
\newblock Spatiotemporal learning via infinite-dimensional {B}ayesian filtering and smoothing: A look at {G}aussian process regression through {K}alman filtering.
\newblock \emph{IEEE Signal Processing Magazine}, 30\penalty0 (4):\penalty0 51--61, 2013.

\bibitem[Sch\"afer et~al.(2021)Sch\"afer, Katzfuss, and Owhadi]{schafer2021sparse}
Florian Sch\"afer, Matthias Katzfuss, and Houman Owhadi.
\newblock Sparse {C}holesky factorization by {K}ullback--{L}eibler minimization.
\newblock \emph{SIAM J. Sci. Comput.}, 43\penalty0 (3):\penalty0 A2019--A2046, 2021.

\bibitem[Solak et~al.(2002)Solak, Smith, Leithead, Leith, and Rasmussen]{solak2002derivative}
E.~Solak, R.~M. Smith, W.~E. Leithead, D.~Leith, and C.~Rasmussen.
\newblock Derivative observations in {G}aussian process models of dynamic systems.
\newblock \emph{Advances in neural information processing systems}, 15, 2002.

\bibitem[Srinivas et~al.(2009)Srinivas, Krause, Kakade, and Seeger]{srinivas2009gaussian}
N.~Srinivas, A.~Krause, S.~M. Kakade, and M.~Seeger.
\newblock Gaussian process optimization in the bandit setting: {N}o regret and experimental design.
\newblock \emph{arXiv preprint arXiv:0912.3995}, 2009.

\bibitem[Stahl(2003)]{stahl2003best}
H.~R. Stahl.
\newblock Best uniform rational approximation of $x^\alpha$ on [0, 1].
\newblock \emph{Acta mathematica}, 190\penalty0 (2):\penalty0 241--306, 2003.

\bibitem[Stein(1999)]{stein1999interpolation}
M.~L. Stein.
\newblock \emph{Interpolation of spatial data. Springer series in statistics}.
\newblock Springer New York, 1999.

\bibitem[Swain et~al.(2016)Swain, Stevenson, Leary, Montano-Gutierrez, Clark, Vogel, and Pilizota]{Swain2016}
Peter~S. Swain, Keiran Stevenson, Allen Leary, Luis~F. Montano-Gutierrez, Ivan B.~N. Clark, Jackie Vogel, and Teuta Pilizota.
\newblock Inferring time derivatives including cell growth rates using gaussian processes.
\newblock \emph{Nat. Commun.}, 7\penalty0 (1):\penalty0 13766, 2016.

\bibitem[Tronarp et~al.(2018)Tronarp, Karvonen, and S{\"a}rkk{\"a}]{tronarp2018mixture}
F.~Tronarp, T.~Karvonen, and S.~S{\"a}rkk{\"a}.
\newblock Mixture representation of the {M}at{\'e}rn class with applications in state space approximations and {B}ayesian quadrature.
\newblock In \emph{2018 IEEE 28th International Workshop on Machine Learning for Signal Processing (MLSP)}, pages 1--6. IEEE, 2018.

\bibitem[Vecchia(1988)]{vecchia1988}
A.~V. Vecchia.
\newblock Estimation and model identification for continuous spatial processes.
\newblock \emph{J. R. Stat. Soc. Ser. B. Stat. Methodol.}, 50\penalty0 (2):\penalty0 297--312, 1988.
\newblock ISSN 0035-9246.

\bibitem[Wang(2008)]{wang2008karhunen}
L.~Wang.
\newblock \emph{Karhunen-{L}oève expansions and their applications}.
\newblock London School of Economics and Political Science (United Kingdom), 2008.

\bibitem[Whittle(1963)]{whittle63}
P.~Whittle.
\newblock Stochastic processes in several dimensions.
\newblock \emph{Bull.\ Internat.\ Statist.\ Inst.}, 40:\penalty0 974--994, 1963.

\bibitem[Wood and Chan(1994)]{wood1994simulation}
A.~T. Wood and G.~Chan.
\newblock Simulation of stationary {G}aussian processes in $[0, 1]^d$.
\newblock \emph{J. Comp. Graph. Stat.}, 3\penalty0 (4):\penalty0 409--432, 1994.

\bibitem[Yang et~al.(2018)Yang, Li, Rana, Gupta, and Venkatesh]{yang2018sparse}
A.~Yang, C.~Li, S.~Rana, S.~Gupta, and S.~Venkatesh.
\newblock Sparse approximation for {G}aussian process with derivative observations.
\newblock In \emph{Australasian Joint Conference on Artificial Intelligence}, pages 507--518. Springer, 2018.

\bibitem[Yang et~al.(2016)Yang, Zhu, Choi, and Cox]{Jingjing2016}
Jingjing Yang, Hongxiao Zhu, Taeryon Choi, and Dennis~D. Cox.
\newblock {Smoothing and Mean–Covariance Estimation of Functional Data with a Bayesian Hierarchical Model}.
\newblock \emph{Bayesian Anal.}, 11\penalty0 (3):\penalty0 649 -- 670, 2016.

\end{thebibliography}

\end{document}